\newtheorem{theorem}{Theorem}[section] 
\newtheorem{lemma}[theorem]{Lemma}    
\newtheorem{corollary}[theorem]{Corollary} 
\newtheorem{proposition}[theorem]{Proposition} 
\newtheorem{definition}[theorem]{Definition} 
\newtheorem{remark}[theorem]{Remark}         
\newtheorem{example}[theorem]{Example}
\newcommand{\dz}{d^+_i}
\newcommand{\df}{d^-_i}
\newcommand{\Le}{L_{+}-\epsilon L_{-}}
\newcommand{\LE}{L_{\epsilon}}
\newcommand{\tw}{\tilde{w}^{\epsilon}}
\newcommand{\Om}{\Omega_{\epsilon}^{\sigma}}
\title{repelling curvature via $\epsilon-$repelling Laplacian on positive connected signed graphs}
\author{Yong Lin}
\address{Tsinghua University}
\email{yonglin@tsinghua.edu.cn}
\author{Shi Wan}
\address{Tsinghua University}
\email{wans21@mails.tsinghua.edu.cn}
\date{\today}
\begin{document}
	\maketitle
	\sloppy
	\begin{abstract}

	The paper defines a positive semidefinite operator called $\epsilon-$repelling Laplacian on a positive connected signed graph where $\epsilon$ is an arbitrary positive number less than a constant $\epsilon_0$ related to the graph's consensus problem. Then we investigate the upper bound of the second smallest eigenvalue of $\epsilon-$repelling Laplacian. Besides, we use the pseudoinverse of $\epsilon-$repelling Laplacian to construct a simplex as well as $\epsilon-$repelling cost whose square root turns out to be a distance among the vertices of the simplex. 
	
	We also extend the node and edge resistance curvature proposed by K.Devriendt et al. to node and edge $\epsilon-$repelling curvature and derive the corresponding Lichnerowicz inequalities on any positive connected signed graph. Moreover, it turns out that edge $\epsilon-$repelling curvature is no more than the Lin-Lu-Yau curvature of the underlying graph whose transport cost is $\epsilon-$repelling cost rather than the length of the shortest path.

	\end{abstract}
	
	\section{Introduction}
	A signed graph $\Gamma$ consists of an underlying weighted graph $G=(V,E)$ and a function $\sigma:E\rightarrow \{\pm 1\}$ satisfying $\sigma_{uv}=\sigma_{vu}$ for any $u\sim v$ that is usually called a connection or a signature on the signed graph. The earliest foundations of signed graphs can be traced back to Fritz Heider’s balance theory\cite{heider1946attitudes} in the 1940s, which classified triadic relationships (sets of three individuals) into "balanced" or "unbalanced". In his model, positive relationships (friendships) were denoted by +1 and negative relationships (hostility) by -1. Balanced configurations in Heider’s theory correspond to signed $3-$cycles which have a positive product of edge signs, thus establishing the basis for the study of signed graphs in sociology and psychology. In the 1950s, the concept of signed graphs was formally defined by Frank Harary\cite{cartwright1956structural}. F.Harary's structural balance theory provided a mathematical model for social dynamics, which states that a signed graph is balanced if and only if every cycle of the signed graph has a product of edge signs equal to $1$. Thomas Zaslavsky\cite{zaslavsky1982signed} provided a comprehensive formalization of the key concepts of signed graphs including the definitions of balance, switching equivalence, frustration index and the signed counterparts of classical graph matrices in the 1980s.  
	
	The signed graph is a useful tool for solving many problems in discrete geometry. \cite{bilu2006lifts} related signed graphs to $2-$lifts of graphs and reduced the problem of constructing expander graphs to finding a signature on a signed graph such that the signed graph has a small spectral radius. Positive edges on the signed graph represent "cross" connections on the covering graph, while negative edges represent "parallel" connections. In \cite{jiang2023spherical}, the authors established a novel and elegant connection between spherical two-distance sets and the spectral theory of signed graphs and thus proved a lower bound on the maximum size of a spherical two-distance set. Moreover, signed graphs also have applications in statistical physics, especially in the study of spin systems and the Ising model. The Ising model, initially developed by Ernst Ising\cite{ising1925beitrag}, is a mathematical model in which the vertices of a graph represent magnetic spins and the edges represent interactions between adjacent spins. The interactions can either be ferromagnetic (positive sign) or antiferromagnetic (negative sign).
	
	By the connection on a signed graph, the edge set can be partitioned into a positive edge set and a negative edge set. We call the graph consisting of the same vertices and positive(\text{resp.\ }negative) edge set as the positive(\text{resp.\ }negative) subgraph of the signed graph. We say a signed graph is positive connected(\text{resp.\ }negative) if its positive(\text{resp.\ }negative) subgraph is connected. As proposition \ref{swi} shows, any connected signed graph can be switching equivalent to one with the same underlying graph and connected positive subgraph. Thus the article pays attention to the positive connected signed graph.
	
	We first define a positive semidefinite operator $\Le$ with parameter $\epsilon$, called $\epsilon-$repelling Laplacian, where $L_{+}, L_{-}$ are Laplacians on the positive subgraph and negative subgraph of the signed graph respectively. We discuss the range of $\epsilon$ in detail in the following sections. We derive some upper bounds of the second smallest eigenvalue of $\Le$ in Theorem \ref{eigdeg} and Theorem \ref{main2}. As you can see, when $\epsilon=-1$, $\Le$ is the Laplacian on the underlying graph $G$. So our theorems can recover existing results on Laplacian second smallest eigenvalue on graphs. 
	
	\begin{theorem}\label{eigdeg}
		Suppose $(G,\sigma,w)$ is a non-complete positive-connected signed graph and $\epsilon_0$ is its consensus index. For $\forall \epsilon<\epsilon_0$, let $\lambda_2^{(\epsilon)}$ be the second smallest eigenvalue of $\Le$ and $M_{\epsilon}:=\max_{x\in V}d_x^{+}-\epsilon d_x^{-}$. Then we have
		\[ \lambda_2^{(\epsilon)}\le M_{\epsilon} \]
	\end{theorem}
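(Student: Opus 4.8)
The plan is to bound $\lambda_2^{(\epsilon)}$ from above through the Courant--Fischer min--max principle applied to the real symmetric matrix $\Le$: writing its eigenvalues as $\lambda_1^{(\epsilon)}\le\lambda_2^{(\epsilon)}\le\cdots$,
\[ \lambda_2^{(\epsilon)} \;=\; \min_{\dim W=2}\ \max_{0\ne f\in W}\ \frac{\langle (\Le)f,\,f\rangle}{\langle f,\,f\rangle}, \]
so it suffices to produce one two-dimensional subspace $W\subseteq\mathbb{R}^V$ on which the Rayleigh quotient never exceeds $M_\epsilon$. The engine is the Dirichlet-form identity
\[ \langle (\Le)f,\,f\rangle \;=\; \sum_{\{a,b\}\in E^{+}} w_{ab}\,(f(a)-f(b))^2 \;-\; \epsilon\sum_{\{a,b\}\in E^{-}} w_{ab}\,(f(a)-f(b))^2, \]
valid for all $f$ since $L_{+}$ and $L_{-}$ are the combinatorial Laplacians of the positive and negative subgraphs.

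Since $G$ is non-complete, I would fix vertices $u,v$ joined by no edge of $G$ --- positive or negative --- and take $W=\operatorname{span}\{\mathbf{1},\,\mathbf{1}_u-\mathbf{1}_v\}$, which is genuinely two-dimensional because $\mathbf{1}_u-\mathbf{1}_v$ has a zero coordinate (connectedness of the positive subgraph forces $|V|\ge 3$). Evaluating the Dirichlet form at $f=\mathbf{1}_u-\mathbf{1}_v$: as $u\not\sim v$, an edge contributes $(f(a)-f(b))^2=1$ precisely when exactly one endpoint lies in $\{u,v\}$ and $0$ otherwise, so the positive sum is $d_u^{+}+d_v^{+}$, the negative sum is $d_u^{-}+d_v^{-}$, whence
\[ \langle (\Le)(\mathbf{1}_u-\mathbf{1}_v),\,\mathbf{1}_u-\mathbf{1}_v\rangle = (d_u^{+}-\epsilon d_u^{-})+(d_v^{+}-\epsilon d_v^{-}), \qquad \|\mathbf{1}_u-\mathbf{1}_v\|^2 = 2. \]
Because $\Le\mathbf{1}=0$ (as $L_{+}\mathbf{1}=L_{-}\mathbf{1}=0$) and $\mathbf{1}\perp(\mathbf{1}_u-\mathbf{1}_v)$, an arbitrary $a\mathbf{1}+b(\mathbf{1}_u-\mathbf{1}_v)\in W$ has Rayleigh quotient
\[ \frac{b^2\big[(d_u^{+}-\epsilon d_u^{-})+(d_v^{+}-\epsilon d_v^{-})\big]}{|V|\,a^2+2b^2} \;\le\; \frac{(d_u^{+}-\epsilon d_u^{-})+(d_v^{+}-\epsilon d_v^{-})}{2} \;\le\; M_\epsilon, \]
the first step because the diagonal entries $d_x^{+}-\epsilon d_x^{-}$ of the positive semidefinite matrix $\Le$ are nonnegative, the second by definition of $M_\epsilon$. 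Plugging this $W$ into the min--max formula yields $\lambda_2^{(\epsilon)}\le M_\epsilon$.

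I do not anticipate a genuine obstacle: this is the signed-graph analogue of the classical test-function proof that $\lambda_2\le\Delta(G)$ for a non-complete graph (recovered here at $\epsilon=-1$, where $M_{-1}=\max_x d_x=\Delta(G)$). The only point requiring care is that $u$ and $v$ must be non-adjacent in the \emph{full} underlying graph $G$, not merely in the positive subgraph, so that the negative Dirichlet term also collapses cleanly and the sign of $\epsilon$ never enters; the positive-connectedness hypothesis is used only to guarantee $|V|\ge 3$ and, in the background, that $\lambda_2^{(\epsilon)}$ is a true spectral gap.
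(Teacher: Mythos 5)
Your proof is correct and takes essentially the same route as the paper's: both bound $\lambda_2^{(\epsilon)}$ by evaluating the Rayleigh quotient at the test function $\mathbf{1}_u-\mathbf{1}_v$ for a pair $u,v$ non-adjacent in the full underlying graph, the only cosmetic difference being that you invoke Courant--Fischer on the two-dimensional subspace $\operatorname{span}\{\mathbf{1},\mathbf{1}_u-\mathbf{1}_v\}$ while the paper uses $\lambda_2^{(\epsilon)}=\inf_{g\perp\mathbf{1}}\mathcal{R}(g)$ directly, and you compute the quadratic form via the Dirichlet sum where the paper expands it into diagonal and (vanishing) cross terms.
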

	
	\begin{theorem}\label{main2}
		Suppose $(G,\sigma,w)$ is a positive-connected and negative-connected signed graph with $n$ vertices and $\epsilon_0$ is its consensus index. For $\forall \epsilon<\epsilon_0$, let $\lambda_2^{(\epsilon)}$ be the second smallest eigenvalue of $\Le$. Denote by $D$ the diameter of $G$, by $d_{\text{max}}^{+}$ the maximum of $d^{+}$ on $V_{+}$ and by $\mu_{-}^0$ the minimum of $w^{-}$ on $E_{-}$. Then
		\[ \lambda_2^{(\epsilon)}\le 2d_{\text{max}}^{+}-\frac{\epsilon\mu_{-}^0}{Dn} \]
	\end{theorem}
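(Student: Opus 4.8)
The plan is to pit the ``positive part'' of the Rayleigh quotient of $\LE$ against its ``negative part''. Since $\epsilon<\epsilon_0$, the operator $\LE$ is positive semidefinite with $\ker\LE=\operatorname{span}\{\mathbf 1\}$ (this is precisely the content of being below the consensus index, together with connectedness of the positive subgraph), so
\[
\lambda_2^{(\epsilon)}=\min_{0\neq f\perp\mathbf 1}\frac{\langle\LE f,f\rangle}{\|f\|^2}
=\min_{0\neq f\perp\mathbf 1}\frac{\sum_{uv\in E_+}w^+_{uv}\bigl(f(u)-f(v)\bigr)^2-\epsilon\sum_{uv\in E_-}w^-_{uv}\bigl(f(u)-f(v)\bigr)^2}{\|f\|^2}.
\]
Let $f^\ast$ be a minimizer. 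The elementary estimate $\sum_{uv\in E_+}w^+_{uv}(f(u)-f(v))^2\le 2\sum_{uv\in E_+}w^+_{uv}\bigl(f(u)^2+f(v)^2\bigr)=2\sum_{x\in V}d_x^{+}f(x)^2\le 2d_{\text{max}}^{+}\|f\|^2$ holds for every $f$ (equivalently $L_+\preceq 2d_{\text{max}}^{+}I$), so evaluating at $f^\ast$ gives, for $\epsilon\ge 0$,
\[
\lambda_2^{(\epsilon)}\ \le\ 2d_{\text{max}}^{+}\ -\ \epsilon\,\frac{\langle L_-f^\ast,f^\ast\rangle}{\|f^\ast\|^2}.
\]

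It then remains to bound $\langle L_-f^\ast,f^\ast\rangle/\|f^\ast\|^2$ from below. Since $f^\ast\perp\mathbf 1$ and the negative subgraph is connected, $\ker L_-=\operatorname{span}\{\mathbf 1\}$ and hence $\langle L_-f^\ast,f^\ast\rangle/\|f^\ast\|^2\ge\lambda_2(L_-)$, the algebraic connectivity of the weighted negative subgraph. Thus the whole statement reduces to the diameter-type estimate
\[
\lambda_2(L_-)\ \ge\ \frac{\mu_{-}^0}{Dn},
\]
which I would prove by the classical ``route along geodesics'' argument: using $\|f\|^2=\tfrac1{2n}\sum_{x,y\in V}(f(x)-f(y))^2$ for $f\perp\mathbf 1$, fix for each ordered pair $(x,y)$ a shortest path $P_{xy}$, apply Cauchy--Schwarz in the form $(f(x)-f(y))^2\le|P_{xy}|\sum_{e=uv\in P_{xy}}(f(u)-f(v))^2$, sum over all pairs, and interchange the order of summation. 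Each path has at most $D$ edges, each edge is used by at most $n^2$ ordered pairs, and $w^-_{uv}\ge\mu_{-}^0$, which yields $2n\|f\|^2\le\tfrac{Dn^2}{\mu_{-}^0}\langle L_-f,f\rangle$ and hence the bound. Combining the three displays gives $\lambda_2^{(\epsilon)}\le 2d_{\text{max}}^{+}-\epsilon\mu_{-}^0/(Dn)$.

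The step I expect to need real care is the geodesic argument, specifically reconciling the diameter appearing there with the diameter $D$ of the underlying graph $G$ as stated. The telescoping bound $(f(x)-f(y))^2\le|P_{xy}|\sum_{e=uv\in P_{xy}}(f(u)-f(v))^2$ only feeds into $\langle L_-f,f\rangle$ when $P_{xy}$ uses \emph{negative} edges, so one is a priori forced to take shortest paths in the negative subgraph, whose diameter may exceed $D$. Negative-connectedness guarantees such paths exist; getting the clean constant $Dn$ rather than a larger quantity built from the diameter of the negative subgraph is where the work lies (e.g.\ by detouring each positive edge of a $G$-geodesic through a short negative path, or by a sharper count of how many ordered pairs route through a fixed negative edge). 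The other ingredients --- positive semidefiniteness and simplicity of the zero eigenvalue of $\LE$ for $\epsilon<\epsilon_0$, and $L_+\preceq 2d_{\text{max}}^{+}I$ --- are routine; the case $\epsilon<0$, if included in the intended range, requires a different choice of test vector, since there the inequality $-\epsilon\langle L_-f^\ast,f^\ast\rangle/\|f^\ast\|^2\le-\epsilon\mu_{-}^0/(Dn)$ goes the wrong way.
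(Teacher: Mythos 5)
Your skeleton coincides with the paper's: write $\lambda_2^{(\epsilon)}$ as a Rayleigh quotient over $f\perp\mathbf 1$, split it into $A-\epsilon B$ with $A$ the positive-edge energy and $B$ the negative-edge energy, and bound $A\le 2d_{\text{max}}^{+}\|f\|^2$ exactly as you do. Where you diverge is the lower bound on $B$. You reduce to the algebraic connectivity $\lambda_2(L_-)$ and propose the all-pairs geodesic-routing estimate; the paper instead works pointwise with the normalized eigenfunction $f$ of $\lambda_2^{(\epsilon)}$: after arranging $\max f=\max|f|$, the pigeonhole principle gives a vertex $x_0$ with $f(x_0)\ge 1/\sqrt{n}$, orthogonality to $\mathbf 1$ gives a vertex $y_0$ with $f(y_0)<0$, and a single path of negative edges $x_0=z_1\sim\cdots\sim z_m=y_0$ together with Cauchy--Schwarz yields
\[
B\ \ge\ \mu_{-}^0\sum_{k=1}^{m-1}\bigl(f(z_k)-f(z_{k+1})\bigr)^2\ \ge\ \frac{\mu_{-}^0}{m}\bigl(f(x_0)-f(y_0)\bigr)^2\ \ge\ \frac{\mu_{-}^0}{mn}.
\]
This single-path argument is lighter than the all-pairs routing (no counting of how many ordered pairs use a given edge, no detour through $\lambda_2(L_-)$), but it buys nothing on the point you flag.

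The difficulty you single out is genuine and, as far as I can see, is not resolved in the paper either: the path $z_1,\dots,z_m$ must lie in the negative subgraph, so $m-1$ is at least the distance from $x_0$ to $y_0$ in $(V,E_{-},w^{-})$, and the final step $\mu_{-}^0/(mn)\ge\mu_{-}^0/(Dn)$ silently uses $m\le D$ with $D$ the diameter of the underlying graph $G$ --- which can fail, since the diameter of the negative subgraph may exceed $D$. The bound is correct if one reads $D$ as the diameter of the negative subgraph (or inflates the constant accordingly), and the fixes you sketch go in the right direction. Your second caveat is also on target: both your argument and the paper's use $-\epsilon B\le -\epsilon\mu_{-}^0/(Dn)$, which requires $\epsilon\ge 0$ even though the statement allows any $\epsilon<\epsilon_0$; for $\epsilon<0$ one would instead need an upper bound on $B$, which neither proof supplies.
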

		
	Then we use the pseudoinverse of $\epsilon-$repelling laplacian to define a kind of "resistance" on the positive connected signed graph, called $\epsilon-$repelling cost and denoted by $\Om(\cdot,\cdot)$. In Theorem \ref{main3}, we demonstrate the monotonicity of $\Om(i,j)$ with respect to $\epsilon$ and thus know the $\epsilon-$repelling cost between any two vertices is no less than resistance between them if $\epsilon>0$.
	\begin{theorem}\label{main3}
		For any $\epsilon_1<\epsilon_2<\epsilon_0$ and for $\forall i,j \in V$:
		\[ \Omega_{\epsilon_1}^{\sigma}(i,j)\le \Omega_{\epsilon_2}^{\sigma}(i,j) \]	
		Especially, let $\Omega(i,j)$ be the resistance on the underlying graph $G$, then for any $0<\epsilon<\epsilon_0$:
		\[ \Omega(i,j)\le \Omega_{\epsilon}^{\sigma}(i,j) \]	
	\end{theorem}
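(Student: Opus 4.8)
\emph{Proof proposal.} The plan is to reduce the monotonicity of $\Om(i,j)$ to the elementary Loewner (semidefinite‑order) comparison of the operators $\LE$ themselves, and then to transfer it to the pseudoinverses through a variational formula. Recall that for $\epsilon<\epsilon_0$ the operator $\LE=\Le$ is positive semidefinite with $\ker\LE=\langle\mathbf 1\rangle$ — the positive subgraph is connected, and the consensus index $\epsilon_0$ is precisely the threshold below which $0$ remains a simple eigenvalue of $\LE$ — so $\operatorname{range}\LE=\mathbf 1^{\perp}$ for every such $\epsilon$, and in particular $e_i-e_j\in\operatorname{range}\LE$. Writing $\Om(i,j)=(e_i-e_j)^{\top}\LE^{+}(e_i-e_j)$, the standard identity for a symmetric positive semidefinite $L$ and $b\in\operatorname{range}L$,
\[
b^{\top}L^{+}b=\max_{x}\bigl(2\,x^{\top}b-x^{\top}L\,x\bigr),
\]
obtained by setting the gradient $2b-2Lx$ to zero and invoking concavity, yields the representation
\[
\Om(i,j)=\max_{x\in\mathbb R^{V}}\Bigl(2\,x^{\top}(e_i-e_j)-x^{\top}\LE\,x\Bigr).
\]

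Now fix $\epsilon_1<\epsilon_2<\epsilon_0$. Since $L_{\epsilon_1}-L_{\epsilon_2}=(\epsilon_2-\epsilon_1)L_{-}$ and $L_{-}$ is positive semidefinite, we have $x^{\top}L_{\epsilon_1}x\ge x^{\top}L_{\epsilon_2}x$ for all $x$, hence $2x^{\top}(e_i-e_j)-x^{\top}L_{\epsilon_1}x\le 2x^{\top}(e_i-e_j)-x^{\top}L_{\epsilon_2}x$ for all $x$; taking the supremum over $x$ on both sides and using the representation above gives $\Omega_{\epsilon_1}^{\sigma}(i,j)\le\Omega_{\epsilon_2}^{\sigma}(i,j)$. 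For the second assertion, note $\epsilon_0>0$, so this applies with $\epsilon_1=-1<\epsilon_2=\epsilon\in(0,\epsilon_0)$; since $L_{-1}=L_{+}+L_{-}$ is exactly the (weighted) Laplacian of the underlying graph $G$, the same pseudoinverse construction gives $\Omega_{-1}^{\sigma}(i,j)=\Omega(i,j)$, and therefore $\Omega(i,j)=\Omega_{-1}^{\sigma}(i,j)\le\Omega_{\epsilon}^{\sigma}(i,j)$ for every $0<\epsilon<\epsilon_0$.

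The one point that needs care — and the place where a naive argument would fail — is the passage from $\LE$ to $\LE^{+}$: pseudoinversion does \emph{not} reverse the Loewner order for arbitrary positive semidefinite matrices, and what rescues the argument here is that all the $\LE$ with $\epsilon<\epsilon_0$ share the common kernel $\langle\mathbf 1\rangle$ (equivalently the common range $\mathbf 1^{\perp}$). The variational identity packages this fact cleanly; alternatively one could restrict each $\LE$ to $\mathbf 1^{\perp}$, where it is genuinely invertible and $A\mapsto A^{-1}$ is operator‑monotone decreasing, and then read off the inequality from $e_i-e_j\in\mathbf 1^{\perp}$. Either way, once the common‑kernel fact (simplicity of the zero eigenvalue of $\LE$ for $\epsilon<\epsilon_0$, which should be in place from the earlier sections) is granted, no genuinely hard estimate remains.
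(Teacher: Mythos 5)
Your proof is correct, but it takes a genuinely different route from the paper's. The paper first converts the pseudoinverse into a true inverse via $\left(\Le\right)^{\dagger}=\left(\Le+\tfrac{1}{n+1}\vec{1}\vec{1}^T\right)^{-1}-\tfrac{1}{n+1}\vec{1}\vec{1}^T$, then expands $B^{-1}-A^{-1}$ as a Neumann series in $(\epsilon_2-\epsilon_1)A^{-1}L_{-}$, checks that every term $\left(A^{-1}L_{-}\right)^iA^{-1}$ is positive semidefinite, and — because the series only converges when $\epsilon_2-\epsilon_1$ is small — has to patch together short intervals by transitivity to get the global statement. You instead use the concave variational (Legendre-type) representation $b^{\top}L^{\dagger}b=\max_x\left(2x^{\top}b-x^{\top}Lx\right)$ for $b\in\operatorname{range}L$, which reduces the whole theorem to the trivial Loewner comparison $L_{\epsilon_1}-L_{\epsilon_2}=(\epsilon_2-\epsilon_1)L_{-}\succeq 0$ and needs no smallness restriction, no series, and no patching. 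Both arguments hinge on the same structural fact, which you correctly isolate: for every $\epsilon<\epsilon_0$ the kernel of $\Le$ is exactly $\langle\vec{1}\rangle$, so $\vec{e_i}-\vec{e_j}$ lies in the common range and the comparison of quadratic forms of pseudoinverses is legitimate (the paper uses this implicitly when it asserts $\lambda_2(A)>0$ for all $\epsilon_1<\epsilon_0$). Your identification of $\epsilon=-1$ with the underlying-graph Laplacian for the second assertion matches the paper's remark preceding its proof. The trade-off: the paper's expansion gives slightly more information (an explicit first-order term $(\epsilon_2-\epsilon_1)A^{-1}L_{-}A^{-1}$ quantifying the increment), while your argument is shorter, global in one step, and makes transparent exactly where the common-kernel hypothesis enters.
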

	
	The Lichnerowicz Spectral Inequality traces its origins to the foundational work of André Lichnerowicz. His pioneering efforts\cite{lichnerowicz1958book} marked the beginning of a deeper understanding of the interplay between the spectrum of the Laplace–Beltrami operator and the geometric curvature of Riemannian manifolds. Specifically, he proved that for a compact 
	$n$-dimensional Riemannian manifold $(M,g)$ with Ricci curvature bounded below by $Ric\ge (n-1)K$ for some $K>0$, the first non-zero eigenvalue $\lambda$ of the Laplacian satisfies:
	\[ \lambda\ge nK \]
	In the discrete settings, Y.Lin et al.\cite{lin2011ricci} introduced a notion of Lin-Lu-Yau curvature on graphs and established a discrete analogue of the Lichnerowicz inequality. In \cite{liu2019curvature}, the authors considered curvature dimension inequalities with signatures and proved the Lichnerowicz inequality on the connection graphs. S.Steinerberger\cite{Steinerberger2017} proposed a new approach to defining curvature on graphs using equilibrium measures and proved the corresponding discrete Lichnerowicz inequality while the form of Lichnerowicz inequality in \cite{devriendt2024graph} is its special case.  
	
	Based on $\Om(\cdot,\cdot)$ we define two new curvatures on the vertex set $V$ and the product of vertex set $V\times V$ of positive connected signed graphs, called node $\epsilon-$repelling curvature $\tau_{\epsilon}(\cdot)$ and edge $\epsilon-$repelling curvature $\vartheta_{\epsilon}(\cdot,\cdot)$, generalizing the preceding definitions of node and edge resistance curvature proposed in \cite{devriendt2022effective}. 
	
	We obtain the version of Lichnerowicz inequality in terms of node $\epsilon-$repelling curvature: the second smallest eigenvalue of $\epsilon-$repelling Laplacian has a lower bound related to $K_{\epsilon}$ if $\tau_{\epsilon}$ is bounded below by a positive constant $K_{\epsilon}$ in Theorem \ref{main4}, as a generalization of \cite[Theorem 2]{devriendt2024graph}.
	
	\begin{theorem}\label{main4}
		Suppose node $\epsilon-$repelling curvature $\tau_{\epsilon}$ on positive connected signed graph $(G,\sigma,w)$ has positive lower bound $K_{\epsilon}$, then the second smallest eigenvalue $ \lambda_2^{(\epsilon)}$ of $\Le$ satisfies:
		\[ \lambda_2^{(\epsilon)}\ge \frac{2K_{\epsilon}}{|V(G)|} \]
	\end{theorem}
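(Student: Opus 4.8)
The plan is to convert the curvature hypothesis $\tau_\epsilon\ge K_\epsilon$ into an eigenvalue bound via the pseudoinverse $\LE^{+}$ and the cost $\Om$, exploiting the fact that the curvature vector is (a scalar multiple of) the equilibrium measure of $\Om$. First I would collect the structural facts that should already follow from the definitions of $\Om$ and $\tau_\epsilon$: writing $\Om$ also for the $n\times n$ matrix $\bigl(\Om(i,j)\bigr)$ and $P:=I-\tfrac1n\mathbf 1\mathbf 1^{\top}$, we have (i) $\Om(i,j)=(e_i-e_j)^{\top}\LE^{+}(e_i-e_j)\ge 0$ and $\Om(i,i)=0$; (ii) $\LE^{+}=-\tfrac12\,P\,\Om\,P$; (iii) $\sum_i\tau_\epsilon(i)=1$ (a Foster-type identity coming from $\mathrm{tr}(\LE^{+}\Le)=n-1$); and (iv) the key identity relating $\tau_\epsilon$ to $\LE^{+}$: there is a constant $C_\epsilon>0$ with $\sum_{j}\Om(i,j)\,\tau_\epsilon(j)=C_\epsilon$ for all $i$ — equivalently $\Le\,\mathrm{diag}(\LE^{+})=2\bigl(\tau_\epsilon-\tfrac1n\mathbf 1\bigr)$, equivalently $\tau_\epsilon$ is the barycentric-coordinate vector of the circumcenter of the simplex attached to $\LE^{+}$, with $C_\epsilon=2\rho^{2}$ where $\rho$ is that circumradius. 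Items (ii)--(iv) are linear-algebra consequences of $\Le\mathbf 1=0$, $\Le\LE^{+}=P$ and the definition of $\tau_\epsilon$, so I would cite them from the earlier sections or give the short verifications.

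Next I would introduce a unit eigenvector $\phi\perp\mathbf 1$ of $\Le$ with $\Le\phi=\lambda_2^{(\epsilon)}\phi$. Then $\LE^{+}\phi=\tfrac{1}{\lambda_2^{(\epsilon)}}\phi$, so by (ii) $P\,\Om\,\phi=-\tfrac{2}{\lambda_2^{(\epsilon)}}\phi$, i.e. $\Om\phi=-\tfrac{2}{\lambda_2^{(\epsilon)}}\phi+c\,\mathbf 1$ for some scalar $c$; pairing with $\tau_\epsilon$ and using (iii)--(iv) pins down $c=\tfrac{2}{\lambda_2^{(\epsilon)}}\langle\tau_\epsilon,\phi\rangle$, hence
\[ \sum_{j\in V}\Om(i,j)\,\phi_j=\frac{2}{\lambda_2^{(\epsilon)}}\bigl(\langle\tau_\epsilon,\phi\rangle-\phi_i\bigr)\qquad(i\in V). \]
This is the analogue, for $\Le$, of the curvature--Laplacian identity used by Steinerberger and Devriendt, and everything past this point uses only $\Om\ge 0$ and $\tau_\epsilon\ge K_\epsilon$.

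For the last step I would pair the displayed system with $\phi$ to get $\phi^{\top}\Om\phi=-2/\lambda_2^{(\epsilon)}$, then use the identity $-\phi^{\top}\Om\phi=\tfrac12\sum_{i,j}\Om(i,j)(\phi_i-\phi_j)^2-\sum_i\phi_i^2\sum_j\Om(i,j)$ (valid since $\sum_i\phi_i=0$), drop the nonnegative second term, and bound $\Om(i,j)\le C_\epsilon/K_\epsilon$ — which is immediate from (iv) since $K_\epsilon\,\Om(i,j)\le\sum_k\Om(i,k)\tau_\epsilon(k)=C_\epsilon$. With $\sum_{i,j}(\phi_i-\phi_j)^2=2|V(G)|\,\|\phi\|^2=2|V(G)|$ this gives $2/\lambda_2^{(\epsilon)}\le |V(G)|\,C_\epsilon/K_\epsilon$, i.e. $\lambda_2^{(\epsilon)}\ge 2K_\epsilon/(|V(G)|\,C_\epsilon)$; it then remains to show $C_\epsilon\le 1$ to conclude. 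I expect $C_\epsilon\le 1$ to follow from the precise normalization built into the definitions (a Foster-type bound on $\Om$), or, in simplex language, from projecting the circumsphere relation $\|u_i-c\|^2=\rho^2$ onto an arbitrary unit direction and using that the barycentric coordinates of the circumcenter $c$ are all $\ge K_\epsilon$ and sum to $1$ — this simultaneously recovers $\lambda_2^{(\epsilon)}\ge 1/(|V(G)|\rho^2)$ and the needed control on $\rho$.

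The main obstacle is precisely this inequality $C_\epsilon\le 1$, i.e. the only place where positivity of the curvature has to be used with full strength rather than cosmetically; getting the bare constant $2/|V(G)|$ out correctly will hinge on it. A secondary technical point — and the reason the whole argument is routed through $\Om$ rather than directly through the eigenvalue equation $\Le\phi=\lambda_2^{(\epsilon)}\phi$ — is that $\Le=L_+-\epsilon L_-$ has off-diagonal entries of both signs as soon as $\epsilon>0$, so the maximum-principle manipulations available in the genuine graph-Laplacian setting of \cite{devriendt2024graph} are not directly usable; by contrast $\Om$, being $(e_i-e_j)^{\top}\LE^{+}(e_i-e_j)$, is nonnegative with zero diagonal regardless of signs, which is what keeps the estimates above sign-definite.
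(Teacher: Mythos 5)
Your skeleton identities are fine, but the proof does not close, and the place where it fails is exactly the point you flag as ``the main obstacle.'' First, your normalization guess (iii) is not the paper's: here $\tau_\epsilon$ is defined by $\Omega_\epsilon^\sigma\tau_\epsilon=n\vec{1}$, so in your notation $C_\epsilon=n$ identically and $\sum_i\tau_\epsilon(i)=\phi_\epsilon=n\vec{1}^T(\Omega_\epsilon^\sigma)^{-1}\vec{1}$, which is graph-dependent rather than $1$. This is not cosmetic: under your normalization $\sum_i\tau_\epsilon(i)=1$ the curvature is invariant under rescaling all weights while $\lambda_2^{(\epsilon)}$ is not, so a bound of the form $\lambda_2^{(\epsilon)}\ge 2K_\epsilon/|V(G)|$ cannot hold there, and correspondingly $C_\epsilon=2R^2$ is unbounded (it blows up as the weights shrink), so $C_\epsilon\le 1$ is not a ``normalization fact'' and cannot be rescued by positivity of the curvature. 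Second, even after switching to the paper's normalization (where $C_\epsilon=n$ comes for free from the definition), your final chain --- bounding $\Omega_\epsilon^\sigma(i,j)$ uniformly by its maximum $C_\epsilon/K_\epsilon$ and using $\sum_{i,j}(\phi_i-\phi_j)^2=2|V(G)|$ --- yields only $\lambda_2^{(\epsilon)}\ge 2K_\epsilon/(|V(G)|\,C_\epsilon)=2K_\epsilon/|V(G)|^2$, a full factor of $|V(G)|$ short of the theorem. The loss comes from replacing $\Omega_\epsilon^\sigma(i,j)$ by its maximum where only its average is needed.

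The paper's proof avoids the eigenvector entirely and works with the trace: from $W_\epsilon(G^\sigma)=\frac12\sum_{i,j}\Omega_\epsilon^\sigma(i,j)=|V(G)|\,\mathrm{tr}\bigl((\Le)^\dagger\bigr)=|V(G)|\sum_{k\ge 2}1/\lambda_k^{(\epsilon)}>|V(G)|/\lambda_2^{(\epsilon)}$, together with
\[
\sum_{i,j}\Omega_\epsilon^\sigma(i,j)\le\frac{1}{K_\epsilon}\sum_{i,j}\Omega_\epsilon^\sigma(i,j)\tau_\epsilon(j)=\frac{1}{K_\epsilon}\sum_{i}n=\frac{|V(G)|^2}{K_\epsilon},
\]
one gets $|V(G)|/\lambda_2^{(\epsilon)}<|V(G)|^2/(2K_\epsilon)$ and hence the stated bound. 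In other words, the curvature hypothesis is used in aggregate against the row sums $\sum_j\Omega_\epsilon^\sigma(i,j)\tau_\epsilon(j)=n$, and the eigenvalue enters through $1/\lambda_2^{(\epsilon)}\le\mathrm{tr}\bigl((\Le)^\dagger\bigr)$ rather than through the Rayleigh quotient of a single eigenvector. If you want to salvage your route, replace $\phi^T(\Le)^\dagger\phi$ by the full trace and you will recover exactly this argument; as written, the step ``bound $\Omega_\epsilon^\sigma(i,j)\le C_\epsilon/K_\epsilon$ and show $C_\epsilon\le1$'' is a genuine gap.
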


	Furthermore, we demonstrate the edge $\epsilon-$repelling curvature is no more than the Lin-Lu-Yau curvature using $\epsilon-$repelling cost as transport cost in Theorem \ref{thmv}. 
	  \begin{theorem}\label{thmv}
	 	For any $(i,j)\in E(G)$ we have
	 	\[ \vartheta_{\epsilon}(i,j)\le \kappa^{(LLY)}(i,j) \]
	 \end{theorem}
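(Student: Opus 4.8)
\medskip
\noindent\textbf{Proof strategy.}

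I would establish the statement in the form $\kappa^{(LLY)}(i,j)\ge\vartheta_\epsilon(i,j)$ by constructing, for idleness $\alpha$ close to $1$, a transport plan between the lazy random-walk measures $m_i^{\alpha}=\alpha\delta_i+(1-\alpha)\mu_i$ and $m_j^{\alpha}=\alpha\delta_j+(1-\alpha)\mu_j$ on the underlying graph $G$ (with $\mu_x=\sum_{k\sim x}\tfrac{w_{xk}}{d_x}\delta_k$ the one-step walk measure) whose cost, computed with the $\epsilon$-repelling cost, is at most $\Om(i,j)\bigl(1-(1-\alpha)\,\vartheta_\epsilon(i,j)\bigr)+o(1-\alpha)$. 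Granting this, $\kappa_\alpha(i,j)=1-W_1(m_i^{\alpha},m_j^{\alpha})/\Om(i,j)\ge(1-\alpha)\vartheta_\epsilon(i,j)+o(1-\alpha)$, so dividing by $1-\alpha$ and letting $\alpha\to1$ yields the theorem. (One uses here that $\Om(i,j)$ is the transport cost assigned to the edge $ij$ and that $\Om$ is well behaved --- in particular positive, cf.\ Theorem \ref{main3} --- throughout $0<\epsilon<\epsilon_0$.)

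To build the plan, decompose $m_i^{\alpha}$ into a bulk mass $\approx\alpha$ at $i$ together with the boundary mass $(1-\alpha)\mu_i$ spread over $N(i)$, and likewise for $m_j^{\alpha}$. Leave all overlaps in place at zero cost --- notably the mass $(1-\alpha)\tfrac{w_{ij}}{d_j}$ that $m_j^{\alpha}$ already puts at $i$, the mass $(1-\alpha)\tfrac{w_{ij}}{d_i}$ that $m_i^{\alpha}$ puts at $j$, and the overlapping mass at common neighbours --- ship the remaining bulk block straight from $i$ to $j$ at cost $\Om(i,j)$ per unit, and match the remaining boundary mass of $m_i^{\alpha}$ to that of $m_j^{\alpha}$, where moving mass from a neighbour $k$ of $i$ to a neighbour $l$ of $j$ costs $\Om(k,l)$. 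To first order in $1-\alpha$, the saving over the trivial estimate $W_1\le\Om(i,j)$ is a linear combination of $\sum_{k\sim i}w_{ik}\Om(i,k)$, $\sum_{k\sim j}w_{jk}\Om(j,k)$ and cross-sums such as $\sum_{k\sim i}w_{ik}\Om(j,k)$; feeding in the defining identity $\sum_{k\sim x}w_{xk}\Om(x,k)=2(1-\tau_\epsilon(x))$ for the first two sums and the pseudoinverse identity $\LE L_\epsilon^{+}=I-\tfrac{1}{n}J$ (valid because $\LE$ is positive semidefinite with kernel $\langle\mathbf 1\rangle$ when $\epsilon<\epsilon_0$ on a positive-connected signed graph) for the cross-sums makes the saving equal to $(1-\alpha)\,\Om(i,j)\,\vartheta_\epsilon(i,j)$, by the defining formula of the edge $\epsilon$-repelling curvature.

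The hard part is the matching step: $N(i)$ and $N(j)$ overlap, the masses $(1-\alpha)\tfrac{w_{ik}}{d_i}$ and $(1-\alpha)\tfrac{w_{jl}}{d_j}$ need not align, and the vertices $j\in N(i)$, $i\in N(j)$ must be handled without double-charging the edge; moreover the estimate has to be arranged so that precisely the resistances between the neighbours of $i$ and the neighbours of $j$ (not only those to $i$ or to $j$) enter, otherwise the saving reconstructs only a quantity smaller than $\vartheta_\epsilon(i,j)$. A more compact equivalent bookkeeping runs through Kantorovich duality: the idleness limit has the gradient representation $\kappa^{(LLY)}(i,j)=\tfrac{1}{\Om(i,j)}\inf\bigl((\Delta\varphi)(j)-(\Delta\varphi)(i)\bigr)$, the infimum over functions $\varphi$ that are $1$-Lipschitz for the $\epsilon$-repelling cost with $\varphi(i)-\varphi(j)=\Om(i,j)$, where $\Delta$ is the random-walk Laplacian of $G$; pinning the $k=j$ and $k=i$ summands by the constraint and bounding the remaining summands by the Lipschitz inequalities $|\varphi(k)-\varphi(l)|\le\Om(k,l)$ reduces the target inequality $(\Delta\varphi)(j)-(\Delta\varphi)(i)\ge\Om(i,j)\,\vartheta_\epsilon(i,j)$ to the same two identities. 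In either presentation, specialising to $\epsilon=-1$ recovers the inequality between edge resistance curvature and Lin--Lu--Yau curvature established in \cite{devriendt2022effective}.
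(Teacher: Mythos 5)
Your overall direction is right --- exhibit a feasible transport plan (or, dually, bound the Kantorovich potential) whose first-order cost reproduces $\vartheta_\epsilon(i,j)$ --- but you have made the problem much harder than it is, and the step you yourself flag as ``the hard part'' is exactly the step the paper never needs. The paper's proof takes the \emph{product} coupling $A(k,l)=m_i^{\alpha}(k)\,m_j^{\alpha}(l)$: it is trivially feasible, its cost is $\vec{e_i}^T(I-tQ)\,\Om\,(I-tQ)\vec{e_j}$ up to $O(t^2)$, and Lemma \ref{lem4} has already been set up precisely so that this quantity equals $\Om(i,j)\bigl(1-t\,\vartheta_\epsilon(i,j)\bigr)+O(t^2)$; the inequality $W(m_i^t,m_j^t)\le\mathbb{E}\bigl[\Om(N_t,M_t)\bigr]$ then gives $\vartheta_\epsilon(i,j)\le\kappa^{(LLY)}(i,j)$ in two lines. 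Your refined coupling (keep overlaps in place, ship bulk to bulk, match boundary mass to boundary mass at cost $\Om(k,l)$) is not needed, and it is not even clearly admissible as an improvement: since $\Om$ is \emph{not} a metric here (only $\sqrt{\Om}$ is, cf.\ the remark following the definition of $\Om$), you cannot compare $\Om(k,l)$ with $\Om(k,j)+\Om(j,l)$, so there is no a priori reason the boundary-to-boundary matching beats the product coupling, and you give no completed estimate showing its saving is at least $(1-\alpha)\,\Om(i,j)\,\vartheta_\epsilon(i,j)$. As written, that primary route has a genuine gap.

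Your dual bookkeeping, by contrast, is essentially the paper's argument in disguise: pinning the $l=i$ and $k=j$ terms with $\varphi(i)-\varphi(j)=\Om(i,j)$ and bounding the rest by $|\varphi(k)-\varphi(l)|\le\Om(k,l)$ yields exactly $(d_i+d_j)\Om(i,j)-\sum_{l\sim j}w_{jl}\Om(i,l)-\sum_{k\sim i}w_{ik}\Om(j,k)$, which is the first-order saving of the product coupling. Two caveats if you pursue it. First, the gradient representation of $\kappa^{(LLY)}$ as an infimum over $\Om$-Lipschitz potentials needs justification when the cost is not a graph metric; the paper sidesteps this by working with the primal limit directly. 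Second, the identity you quote, $\sum_{k\sim x}w_{xk}\Om(x,k)=2(1-\tau_\epsilon(x))$, is not correct as stated: Proposition \ref{pro3} gives $\sum_{k\sim x}(w^{+}_{xk}-\epsilon w^{-}_{xk})\Om(x,k)=2(1-\vec{r}(x))$ with $\vec{r}$ proportional to $\tau_\epsilon$, and the mismatch between the plain weights $w$ appearing in $Q$ and the signed weights $w^{+}-\epsilon w^{-}$ appearing in $\Le$ is precisely what produces the $(1+\epsilon)\Lambda_\epsilon(i,j)$ term in $\vartheta_\epsilon$. Your sketch never isolates this correction, so the claimed cancellation ``by the defining formula'' is not yet a proof; you need the decomposition $Q=\Le+(1+\epsilon)L_{-}$ exactly as in Lemma \ref{lem4}.
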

	 
	Besides, we also derive a kind of Lichnerowicz inequality in terms of edge $\epsilon-$repelling curvature. We obtain one lower bound of the second smallest eigenvalue of the Laplacian on underlying graph in terms of the positive lower bound of edge $\epsilon-$repelling curvature in Theorem \ref{main5}, which can be regarded as an improved version of \cite[Theorem 4.2]{lin2011ricci}.
	\begin{theorem}\label{main5}
		Assume $\vartheta_{\epsilon}(i,j)\ge k_{\epsilon}> 0,\, i,j\in V $ on $(G,\sigma,w)$ and $\mu_2$ is the second smallest eigenvalue of the Laplacian $Q$ on underlying graph $G$, then
		\[ \mu_2\ge k_{\epsilon} \]
	\end{theorem}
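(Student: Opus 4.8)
The plan is to feed the curvature comparison of Theorem~\ref{thmv} into the optimal-transport proof of the Lin--Lu--Yau Lichnerowicz inequality, carried out with the $\epsilon$-repelling cost $\Om(\cdot,\cdot)$ in place of the shortest-path distance. As a first step, observe that the hypothesis $\vartheta_{\epsilon}(i,j)\ge k_{\epsilon}$ holds in particular for every edge $(i,j)\in E(G)$, so Theorem~\ref{thmv} gives $\kappa^{(LLY)}(i,j)\ge\vartheta_{\epsilon}(i,j)\ge k_{\epsilon}$ on every edge, where $\kappa^{(LLY)}$ is the Lin--Lu--Yau curvature of $G$ whose transport cost is $\Om(\cdot,\cdot)$. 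It therefore suffices to prove the implication ``$\kappa^{(LLY)}\ge k_{\epsilon}$ uniformly $\Rightarrow$ $\mu_2\ge k_{\epsilon}$'', which is the $\epsilon$-repelling analogue of \cite[Theorem~4.2]{lin2011ricci}, where the cost was the graph distance.

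For that implication I would follow the Lin--Lu--Yau eigenfunction argument. Let $f$ be a non-constant eigenfunction of $Q$ with $Qf=\mu_2 f$, and for $\alpha\uparrow 1$ let $\mu_i^{(\alpha)}$ be the lazy random walk measure at $i$. The eigenvalue equation expresses $\sum_z\big(\mu_i^{(\alpha)}-\mu_j^{(\alpha)}\big)(z)f(z)$ in terms of $\mu_2$ and $f(i),f(j)$ (up to the normalisation built into $\mu^{(\alpha)}$), and is of order $1-\alpha$. Writing $C:=\max_{u,v\in V}|f(u)-f(v)|/\Om(u,v)$ for the Lipschitz constant of $f$ with respect to the metric $\Om$, Kantorovich--Rubinstein duality applied to the $1$-Lipschitz test function $f/C$ bounds that sum above by $C\cdot W_{\Om}\big(\mu_i^{(\alpha)},\mu_j^{(\alpha)}\big)$, while the definition of $\kappa^{(LLY)}$ together with the bound from the first step gives $W_{\Om}\big(\mu_i^{(\alpha)},\mu_j^{(\alpha)}\big)\le\big(1-(1-\alpha)k_{\epsilon}+o(1-\alpha)\big)\,\Om(i,j)$. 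Choosing the pair $(i,j)$ that realises $C$, cancelling $\Om(i,j)$, dividing by $1-\alpha$ and letting $\alpha\to 1$ should give $\mu_2\ge k_{\epsilon}$.

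The step I expect to be the main obstacle is reconciling the Wasserstein upper bound with the metric $\Om$: effective-resistance-type distances are in general not geodesic (path) metrics, so the pair $(u,v)$ realising the Lipschitz constant $C$ of $f$ need not be an edge, whereas the curvature bound from Theorem~\ref{thmv} and the naive telescoping estimate only control $W_{\Om}$ along edges. This is exactly where one must use the hypothesis ``$\vartheta_{\epsilon}(i,j)\ge k_{\epsilon}$ for \emph{all} $i,j\in V$'' rather than only on edges: the argument needs the all-pairs transport estimate $W_{\Om}\big(\mu_u^{(\alpha)},\mu_v^{(\alpha)}\big)\le\big(1-(1-\alpha)k_{\epsilon}+o(1-\alpha)\big)\Om(u,v)$, which should follow from extending Theorem~\ref{thmv} to a pairwise comparison $\vartheta_{\epsilon}(u,v)\le\kappa^{(LLY)}(u,v)$ valid for every $u,v$. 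Once this all-pairs estimate is secured, the remaining limit computation is routine and parallels \cite[Theorem~4.2]{lin2011ricci}.
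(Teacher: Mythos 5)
Your proposal is correct and follows the same overall strategy as the paper: both first upgrade the hypothesis to $\kappa^{(LLY)}(i,j)\ge\vartheta_{\epsilon}(i,j)\ge k_{\epsilon}$ via Theorem \ref{thmv}, and then run the Kantorovich-duality / Lipschitz-contraction argument for the lazy walk with transport cost $\Om(\cdot,\cdot)$. The two differ only in the endgame. You plug the eigenfunction $f$ of $\mu_2$ directly into the duality as the (normalised) test function and compare the lower bound $W(m_i^{t},m_j^{t})\ge(1-t\mu_2)\Om(i,j)$ at the pair realising the Lipschitz constant $C$ with the upper bound coming from the curvature hypothesis --- the one-step argument of \cite[Theorem 4.2]{lin2011ricci}. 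The paper instead shows that the transport bound contracts the $\Om$-Lipschitz constant of an \emph{arbitrary} test function by a factor $1-(1-\delta)k_{\epsilon}t$ per step, iterates $n$ times, extracts the decay rate via $n$-th roots, and compares it with the mixing rate $1-t\mu_2$ supplied by the separate Lemma \ref{mix}. Your route is more direct (no mixing-rate lemma needed) and makes explicit what the paper leaves implicit in its final comparison, namely that the rate $1-t\mu_2$ is actually attained by a test function with a nonzero component along the second eigenfunction. Your concern about the pair realising $C$ not being an edge is well placed and is resolved exactly as you suggest: the hypothesis of Theorem \ref{main5} is stated for all pairs $i,j\in V$, and the proofs of Lemma \ref{lem4} and Theorem \ref{thmv} nowhere use adjacency, so the comparison $\vartheta_{\epsilon}(u,v)\le\kappa^{(LLY)}(u,v)$ and the resulting all-pairs transport estimate hold for every $u\neq v$; the paper needs, and implicitly uses, the same all-pairs estimate in order to contract the Lipschitz constant over all pairs.
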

 	In the end, we compute the node and edge $\epsilon-$repelling curvatures on some positive connected signed graphs which share the same underlying graph structure but possess non-isomorphic connections.
	\section{$\epsilon-$repelling Laplacian on positive connected signed graphs}
	 Assume $\Gamma=(G,\sigma,w)$ is a connected signed graph where $V$ is the vertex set, $E$ is the edge set, $\sigma : E \to \{\pm1\}$ is one-dimensional, and $w : E \to \mathbb{R}_{\geq 0}$ is the graph’s weight. Denote by $E_{+}$ the set of edges in $E$ with positive sign $+1$. That is, $E_{+} = \sigma^{-1}(+1)$.	Similarly, denote by $E_{-}$ the set of edges with negative sign $-1$. For a vertex $i$, let $d_i := \sum_{j\sim i} w_{ij}$ be the degree of $i$, $d^+_i := \sum_{(i,j) \in E^+} w_{ij}$
	be the positive degree, and $d^-_i := \sum_{(i,j) \in E^-} w_{ij}$
	be the negative degree. Define $w^{+}, w^{-}$ as the restriction of $w$ on $E_{+}$ and $E_{-}$ respectively and extend them by zero to $E$. We call $(V, E_{+}, w^{+})$ the positive subgraph and $(V, E_{-}, w^{-})$ the negative subgraph of the signed graph $(G,\sigma,w)$. 
	
	Define two Laplacians on the positive subgraph and negative subgraph as follows:
	\[ 
		L_{+}(i,j)=
		\begin{cases}
			\dz &\text{if}\, i=j\\
			-w_{ij}&\text{if}\, (i,j)\in E_{+}\\
			0&\text{otherwise}
		\end{cases}
		\quad
		L_{-}(i,j)=
		\begin{cases}
			\df &\text{if}\, i=j\\
			-w_{ij}&\text{if}\, (i,j)\in E_{-}\\
			0&\text{otherwise}
		\end{cases}
	 \]
	Shi et al. \cite{shi2019dynamics} used these two Laplacians to propose the repelling negative dynamic for signed networks. The form is as follows:  
	\[
	X(t +1) = (I - \alpha L_+ - \beta L_-) X(t).
	\]
	They call $\alpha L_+ - \beta L_-$ the repelling weighted Laplacian of the signed graph. We study $L_+ - \epsilon L_-$, differing from their repelling Laplacian by a constant factor, called the $\epsilon$-repelling Laplacian in our paper.  
	
	We always assume the signed graph $\Gamma$ is positive connected, unless stated otherwise. The reason of the assumption is that every connection on connected signed graph is switching equivalent to a connection whose positive set is connected. The following proposition illustrate that any signed graph is switching equivalent to a positive connected signed graph.  And readers can refer to \cite[Lemma2.5]{cloninger2024random} for the similar proposition on connection graph.
	
	\begin{proposition}\label{swi}
		Assume $(G,\sigma,w)$ is a signed graph with connected underlying graph, then for any spanning tree $T$, there exists a connection $\theta^T$ such that it is switching equivalent to $\sigma$ and it takes positive sign on $E(T)$.
	\end{proposition}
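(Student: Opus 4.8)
The plan is to build the switching function explicitly from the tree. First I would fix an arbitrary root $r\in V$; since $T$ is a spanning tree of the connected underlying graph, each vertex $v$ is joined to $r$ by a unique path $P_v$ in $T$. Define $\eta:V\to\{\pm1\}$ by $\eta_r=1$ and, for $v\neq r$, $\eta_v:=\prod_{e\in E(P_v)}\sigma_e$, the product of the signatures along the tree path from $r$ to $v$. This is well defined precisely because paths in a tree are unique, and it can equivalently be described by the recursion $\eta_v=\eta_u\,\sigma_{uv}$ whenever $u$ is the parent of $v$ in the rooted tree $T$.

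Next I would set $\theta^T_{uv}:=\eta_u\,\sigma_{uv}\,\eta_v$ for every $uv\in E$. By definition this is the signature obtained from $\sigma$ by switching at the set $\{v:\eta_v=-1\}$, so $\theta^T$ is switching equivalent to $\sigma$, and switching alters neither the underlying graph $G$ nor the weight $w$. It remains to verify positivity on $E(T)$: given $uv\in E(T)$, orient $T$ away from $r$ so that (without loss of generality) $u$ is the parent of $v$; then $P_v$ is $P_u$ with the edge $uv$ appended, so $\eta_v=\eta_u\,\sigma_{uv}$, and hence $\theta^T_{uv}=\eta_u\,\sigma_{uv}\,(\eta_u\sigma_{uv})=\eta_u^2\sigma_{uv}^2=1$, since all factors lie in $\{\pm1\}$. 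Thus $\theta^T$ takes the positive sign on every edge of $T$.

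There is no genuine obstacle here; the only points needing care are the well-definedness of $\eta$ (uniqueness of tree paths) and the parent–child recursion $\eta_v=\eta_u\sigma_{uv}$, which one can justify cleanly by induction on the distance in $T$ from the root $r$. I would also add a remark that, since $E(T)$ is connected and spanning, the resulting $\theta^T$ makes the positive subgraph connected, which is exactly why the rest of the paper may restrict attention to positive connected signed graphs; and that this recovers the classical fact that the balance of $\Gamma$ is carried entirely by the non-tree edges.
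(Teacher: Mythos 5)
Your proposal is correct and is essentially the paper's own argument: the paper also fixes a base vertex $i_0$, takes the switching function $f(i)=\sigma_{P_{i_0i}}$ (the product of signs along the unique tree path), and defines $\theta^T_{ij}=\sigma_{P_{i_0i}}\sigma_{ij}\sigma_{P_{i_0j}}$, exactly your $\eta_u\sigma_{uv}\eta_v$. You merely spell out the parent--child recursion and the verification on tree edges in more detail than the paper does.
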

	
	\begin{proof}
		Fix a vertex $i_0$, for $\forall i\in V$ there exists a unique path $P_{i_0i}$ on T from $i_0$ to $i$.
		Define $\theta_T$ as follows:
		\[ \theta^T_{ij}:=\sigma_{P_{i_0i}}\sigma_{ij}\sigma_{P_{i_0j}} \]
		That is, $f(i):=\sigma_{P_{i_0i}}$ is the switching function. It's easy to check that $\theta^T_{ij}=1$ if $(i,j)\in E(T)$.
	\end{proof}

	\subsection{The consensus index $\epsilon_0$}
	As is well known, the Laplacian operator on the graph without connection is crucial and useful in studying the graph’s geometry. So we pay more attention to those $\epsilon$ such that the $\epsilon$-repelling Laplacian has nice properties similar to the original Laplacian. 
	
	The following lemma tells what $\epsilon$ we care about.
	
	\begin{lemma}\label{lem1}
		If $(G,\sigma,w)$ is a positive connected signed graph, then there exists $\epsilon > 0$ such that $L_+ - \epsilon L_-$ is a positive semidefinite matrix with rank $n - 1$.  
	\end{lemma}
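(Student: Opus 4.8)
The plan is to use that both $L_{+}$ and $L_{-}$ are positive semidefinite and kill the all-ones vector $\mathbf{1}$, while positive-connectedness forces $\ker L_{+}$ to be exactly $\mathrm{span}(\mathbf{1})$. First I would observe that, being symmetric with $\mathbf{1}$ in their kernels, $L_{+}$ and $L_{-}$ each map the hyperplane $H:=\mathbf{1}^{\perp}=\{x:\sum_i x_i=0\}$ into itself; hence for every $\epsilon$ the matrix $L_{+}-\epsilon L_{-}$ is block diagonal with respect to $\mathbb{R}^n=\mathrm{span}(\mathbf{1})\oplus H$, vanishing identically on the $\mathrm{span}(\mathbf{1})$ block. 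So the whole question reduces to choosing $\epsilon>0$ for which the restriction $(L_{+}-\epsilon L_{-})|_{H}$ is \emph{positive definite}: such a choice immediately gives positive semidefiniteness on all of $\mathbb{R}^n$ together with $\ker(L_{+}-\epsilon L_{-})=\mathrm{span}(\mathbf{1})$, i.e.\ rank $n-1$.

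For the restriction to $H$ I would run a Rayleigh-quotient estimate. Since the positive subgraph is connected, $L_{+}|_{H}$ is positive definite, so
\[ \mu_{\max}:=\max_{0\neq x\in H}\frac{x^{\top}L_{-}x}{x^{\top}L_{+}x} \]
is a well-defined, finite, nonnegative number (it is the largest generalized eigenvalue of the pencil $(L_{-},L_{+})$ on $H$, attained on the unit sphere of $H$). If $\mu_{\max}=0$, then $x^{\top}L_{-}x=0$ for all $x\in H$ and, as $L_{-}\mathbf{1}=0$ too, $L_{-}=0$; then any $\epsilon>0$ works and $L_{+}-\epsilon L_{-}=L_{+}$ has rank $n-1$. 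If $\mu_{\max}>0$, pick any $\epsilon$ with $0<\epsilon<1/\mu_{\max}$; then for every nonzero $x\in H$ one has $\epsilon\,x^{\top}L_{-}x\le\epsilon\mu_{\max}\,x^{\top}L_{+}x<x^{\top}L_{+}x$, so $x^{\top}(L_{+}-\epsilon L_{-})x>0$, which is exactly the positive-definiteness on $H$ demanded above.

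The argument is elementary linear algebra, so I do not anticipate a real obstacle; the only point needing care is the reduction to $H$, where one must justify that $L_{+}$ and $L_{-}$ truly share the common kernel direction $\mathbf{1}$ so that the two quadratic forms can be compared on the same invariant subspace, together with the separate bookkeeping for the degenerate no-negative-edge case. I would also note in passing that this computation pins down the sharp threshold $\epsilon_{0}=1/\mu_{\max}$ (the ``consensus index'' referred to later): at $\epsilon=\epsilon_{0}$ a maximizer of the Rayleigh quotient lies in the kernel so the rank drops below $n-1$, and for $\epsilon>\epsilon_{0}$ the form is indefinite on $H$; but for the lemma as stated, any $\epsilon\in(0,\epsilon_{0})$ does the job.
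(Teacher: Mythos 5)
Your proof is correct, and it takes a genuinely different route from the paper's. The paper works with the spectrum of the full matrix: it notes $(L_+-\epsilon L_-)\vec{1}=0$ and then applies Weyl's inequality, $\lambda_2(L_+-\epsilon L_-)\ge\lambda_2(L_+)-\epsilon\lambda_n(L_-)$, so that any $\epsilon<\lambda_2(L_+)/\lambda_n(L_-)$ works; positive connectedness enters only through $\lambda_2(L_+)>0$. You instead restrict both quadratic forms to the invariant hyperplane $H=\vec{1}^{\perp}$, use positive definiteness of $L_+|_H$ to form the generalized Rayleigh quotient $\mu_{\max}=\max_{0\neq x\in H}\frac{x^{\top}L_-x}{x^{\top}L_+x}$, and take $\epsilon<1/\mu_{\max}$. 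What your approach buys is sharpness: $1/\mu_{\max}$ is exactly the consensus index $\epsilon_0$, whereas the Weyl bound $\lambda_2(L_+)/\lambda_n(L_-)$ is only a lower bound for it (the paper's Remark \ref{rmk1} has to discuss separately when that bound is attained). Your argument is also self-contained, needing only compactness of the unit sphere of $H$ rather than a citation to Weyl, and your explicit treatment of the degenerate case $L_-=0$ is a small point the paper glosses over. One quibble with your closing aside: for $\epsilon>\epsilon_0$ the restriction to $H$ need not be \emph{indefinite} --- if the negative subgraph is connected, $L_-|_H$ is positive definite and the form becomes negative definite on $H$ for large $\epsilon$ --- but this does not affect the lemma, which only concerns $\epsilon<\epsilon_0$.
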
 
	
	\begin{proof}
		Note that $(L_+ - \epsilon L_-) \vec{1} = 0$, so $0$ is an eigenvalue of $L_+ - \epsilon L_-$ for any $\epsilon > 0$. We only need to prove that there exists $\epsilon > 0$ such that the second smallest eigenvalue of $L_+ - \epsilon L_-$ is positive.
		
		Assume $\{\lambda_i(A)\}_{i=1}^{n}$ are the non-decreasing eigenvalues of any $n \times n$ matrix $A$. From Weyl’s inequalities \cite{bhatia2013matrix}, which show the relations between the eigenvalues of two Hermitian matrices and their sum, we know  
		\begin{equation}\label{equ1}
			\lambda_2(L_+ - \epsilon L_-) \geq \lambda_2(L_+) - \epsilon \lambda_n(L_-).
		\end{equation}
		Since the positive subgraph is connected, $\lambda_2(L_+) > 0$. Therefore, we can take $\epsilon$ as any real number less than $\frac{\lambda_2(L_+)}{\lambda_n(L_-)}$ such that $\lambda_2(L_+ - \epsilon L_-) > 0$.
	\end{proof}
	
	\begin{remark}\label{rmk1}
		In fact, the inequality (\ref{equ1}) holds with equality if and only if $\lambda_2(L_+)$ and  $\lambda_n(L_-)$ have common eigenvectors, which cannot be satisfied in general, meaning we can take $\epsilon$ as $\frac{\lambda_2(L_+)}{\lambda_n(L_-)}$ in Lemma (\ref{lem1}). Besides, the condition that the signed graph is positive connected is indispensable.		
	\end{remark}

	\begin{definition}
		We define $\epsilon_0$ as the maximum value of $\epsilon$ satisfying Lemma (\ref{lem1}). That is,  
		\[
		\epsilon_0 := \sup \{ \epsilon > 0 \mid L_+ - \epsilon L_- \text{ is semi-positive definite with rank } n - 1 \}.
		\]	
		We call $\epsilon_0$ the consensus index of the positive-connected signed graph.	
	\end{definition}
	
	\begin{remark}
		The reason why we call $\epsilon_0$ the consensus index is that it's a critical value at which the second smallest eigenvalue of repelling Laplacian $L_{+}-\epsilon L_{-}$ is nonzero and $\lambda_2(L_{+}-\epsilon L_{-})$ can be a measure of speed of consensus algorithm\cite{olfati2004consensus}. 
	\end{remark}
	
	The signed graph $(G,\sigma,w)$ can be regarded as the graph $(G,\tw)$ with negative weights where 
	\[ \tw:=w^{+}-\epsilon w^{-} \]
	Then the $\epsilon-$repelling Laplacian $\Le$ on $(G,\sigma,w)$ is the original Laplacian $\LE$ on $(G,\tw)$.  The form of $\LE$ is as follows:
	\[ 
	\LE(i,j)=
	\begin{cases}
		\dz-\epsilon\df&\text{if}\, i=j\\
		-w_{ij}^{+}&\text{if}\, (i,j)\in E_{+}\\
		\epsilon w_{ij}^{-}&\text{if}\, (i,j)\in E_{-}\\
		0&\text{otherwise}
	\end{cases}
	 \]
	 The research\cite{chen2016definiteness,zelazo2014definiteness} about the graph with negative weight require an assumption that any two distinct edges with negative weights are not contained in a cycle. 
	 For convenience, we call this assumption "no negative cycle" assumption.
	 
	 \begin{theorem}
	 	Suppose $(G,\sigma,w)$ is a positive connected signed graph satisfying "no negative cycle" assumption and $\epsilon_0$ is its consensus index. Then for $\forall (i,j)\in E_{-}$, we have 
	 	\[ \epsilon_0 \le \frac{1}{w_{ij}r_{ij}} \]
	 	where $r_{ij}$ is the resistance distance in the positive subgraph $(V, E_{+}, w^{+})$.
	 \end{theorem}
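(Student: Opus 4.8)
The strategy is to test the quadratic form of $L_+-\epsilon L_-$ against a single well-chosen vector coming from the electrical-network structure of the positive subgraph. Throughout I use the standard identities
\[ x^{\top}L_+x=\sum_{(a,b)\in E_+}w_{ab}(x_a-x_b)^2,\qquad x^{\top}L_-x=\sum_{(a,b)\in E_-}w_{ab}(x_a-x_b)^2,\qquad x\in\mathbb{R}^V. \]
First I would note that $L_+-\epsilon L_-$ is positive semidefinite for \emph{every} $\epsilon<\epsilon_0$: choosing $\epsilon'$ with $\epsilon<\epsilon'\le\epsilon_0$ for which $L_+-\epsilon'L_-$ is positive semidefinite (possible since $\epsilon_0$ is a supremum), one writes $L_+-\epsilon L_-=(L_+-\epsilon'L_-)+(\epsilon'-\epsilon)L_-$, a sum of two positive semidefinite matrices. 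Consequently, for any fixed $x$ with $x^{\top}L_-x>0$ the inequality $x^{\top}L_+x\ge\epsilon\,x^{\top}L_-x$ holds for all $\epsilon<\epsilon_0$, whence
\[ \epsilon_0\ \le\ \frac{x^{\top}L_+x}{x^{\top}L_-x}. \]
So it suffices to produce one such $x$ making the right-hand side equal to $\tfrac{1}{w_{ij}r_{ij}}$ or smaller.

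Fix the negative edge $(i,j)\in E_-$. I would take $x^{*}\in\mathbb{R}^V$ to be the equilibrium potential in the positive subgraph $(V,E_+,w^+)$ with boundary values $x^{*}_i=1$, $x^{*}_j=0$ that is harmonic (with respect to $w^+$) at every other vertex; this is well defined because the positive subgraph is connected. By Dirichlet's principle $x^{*}$ minimizes $y\mapsto\sum_{(a,b)\in E_+}w_{ab}(y_a-y_b)^2$ over all $y$ with $y_i=1,\ y_j=0$, and the minimum value is exactly the effective conductance $1/r_{ij}$; hence $(x^{*})^{\top}L_+x^{*}=1/r_{ij}$. For the denominator I would keep only the term of the edge $(i,j)$ and discard the remaining nonnegative terms:
\[ (x^{*})^{\top}L_-x^{*}=\sum_{(a,b)\in E_-}w_{ab}(x^{*}_a-x^{*}_b)^2\ \ge\ w_{ij}(x^{*}_i-x^{*}_j)^2=w_{ij}>0. \]
Substituting these into the quotient bound gives $\epsilon_0\le\dfrac{1/r_{ij}}{w_{ij}}=\dfrac{1}{w_{ij}r_{ij}}$, as claimed.

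The only point needing a little care is the first step, turning ``positive semidefinite for all $\epsilon<\epsilon_0$'' into the clean quotient bound on $\epsilon_0$ via the convexity of the positive semidefinite cone and a passage to the limit $\epsilon\uparrow\epsilon_0$; everything afterwards is the classical variational description of effective resistance together with a trivial one-term estimate. I would also point out that this direction of the inequality does not actually invoke the ``no negative cycle'' hypothesis; that assumption would become relevant only if one wanted a matching lower bound for $\epsilon_0$ and thereby an exact formula for it.
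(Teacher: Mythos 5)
Your argument is correct. It shares the paper's overall skeleton --- establish that $L_{+}-\epsilon L_{-}$ is positive semidefinite for every $\epsilon<\epsilon_0$, deduce $\epsilon\, w_{ij} r_{ij}\le 1$ for each negative edge, and let $\epsilon\to\epsilon_0$ --- but where the paper simply cites Theorem~3.2 of Chen--Khong--Georgiou for the middle step, you reprove that step from scratch: you test the quadratic form against the harmonic potential $x^{*}$ with boundary values $1,0$ at $i,j$ in the positive subgraph, use the Dirichlet principle to identify $(x^{*})^{\top}L_{+}x^{*}=1/r_{ij}$, and drop all negative-edge terms except $(i,j)$ to bound $(x^{*})^{\top}L_{-}x^{*}\ge w_{ij}$. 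Your convex-cone argument for ``PSD for all $\epsilon<\epsilon_0$'' (writing $L_{+}-\epsilon L_{-}$ as the sum of $L_{+}-\epsilon' L_{-}$ and $(\epsilon'-\epsilon)L_{-}$) is also a detail the paper takes for granted. The payoff of your self-contained route is a genuine observation the paper's citation obscures: the ``no negative cycle'' hypothesis plays no role in this direction of the inequality --- it is only needed for the sufficiency half of the cited characterization --- so the stated upper bound on $\epsilon_0$ holds for every positive connected signed graph. The cost is a slightly longer write-up; the paper's version is two lines at the price of an external dependency.
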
  
	 
	 \begin{proof}
	 	For $\forall \epsilon<\epsilon_0$, we know $\Le$ is positive semidefinite. As a result of \cite[Thm3.2]{chen2016definiteness}, we have $\epsilon w_{ij}\le \frac{1}{r_{ij}}$ for any $(i,j)\in E_{-}$. Therefore, $\epsilon\le \frac{1}{w_{ij}r_{ij}}$ and let $\epsilon\rightarrow \epsilon_0$, we finish the proof.
	 \end{proof}
	 
	The balance of the signed graph was introduced by Frank Harary\cite{harary1953notion}. By the definition of balance, the vertex set $V$ of a balanced signed graph can be partitioned into two complementary sets $V_1$ and $V_1^C$, called Harary bipartition, such that the endpoints of every edge with positive sign are either in $V_1$ or $V_1^C$. So a balanced signed graph must not be positive-connected if both $E_{+}(V_1)$ and $E_{+}(V_2)$ are not empty. On the other hand, researchers tend to use connection Laplacian\cite{chung2014ranking} to study the balance of signed graph. Note the consensus index is defined under the premise that the signed graph is positive-connected. A natural question is what relation between the balance and consensus index is. The following lemma tells us that the consensus index doesn't exist on signed graphs which are not only positive-connected but also balanced.
	
	\begin{lemma}
		If $(G,\sigma,w)$ is a not only positive-connected but also balanced signed graph, for $\epsilon>0$, the $\epsilon-$repelling Laplacian $\Le$ is not positive semidefinite.
	\end{lemma}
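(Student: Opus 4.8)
The plan is to exploit the tension between the two hypotheses: balance constrains the signs through a global bipartition, while positive-connectedness forces that bipartition to collapse, leaving no negative edges at all. I would first rewrite balance in terms of a switching function. Balance is equivalent to the existence of $f\colon V\to\{\pm1\}$ with $\sigma_{ij}=f(i)f(j)$ for every edge $(i,j)$; the Harary bipartition recalled just above the lemma is then $V_1=f^{-1}(+1)$ and $V_1^{C}=f^{-1}(-1)$. In this language a positive edge satisfies $f(i)=f(j)$, so its endpoints lie in a common part, while a negative edge satisfies $f(i)\neq f(j)$, so its endpoints straddle the two parts.

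Next I would invoke positive-connectedness. Since the positive subgraph $(V,E_{+},w^{+})$ is connected, any two vertices $i$ and $j$ are joined by a path consisting solely of positive edges. Along such a path $f$ is forced to be constant, because each positive edge equates the $f$-values of its endpoints; hence $f(i)=f(j)$. As $i$ and $j$ are arbitrary, $f$ is constant on all of $V$, so one part of the Harary bipartition is empty. Every edge therefore lies inside a single part and is positive, which is exactly the statement $E_{-}=\varnothing$. Consequently $L_{-}=0$ and the $\epsilon$-repelling Laplacian collapses to $\Le=L_{+}$ for every $\epsilon>0$.

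The decisive and genuinely delicate point is the final interpretation, and this is where I expect the main obstacle to lie. Once $E_{-}=\varnothing$ the operator is literally the positive Laplacian $L_{+}$, which is positive semidefinite of rank $n-1$; so there is no test vector producing a negative value, and any naive attempt to exhibit non-definiteness directly cannot succeed. The honest content is thus that the hypotheses admit no negative edge at all: under the standing convention that a signed graph is genuinely signed (so that the defining term $-\epsilon L_{-}$ is nontrivial), a positive-connected balanced signed graph simply does not exist, and the assertion holds vacuously. Equivalently, reading the claim through the consensus index, since $\Le=L_{+}\succeq 0$ persists for all $\epsilon>0$ the supremum defining $\epsilon_0$ is unbounded and no finite consensus index exists—this is precisely the phenomenon the preceding remark records.
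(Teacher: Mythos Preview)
Your analysis is mathematically correct and in fact exposes a defect in the lemma as stated. You argue that positive-connectedness together with balance forces $E_{-}=\varnothing$: the switching function $f\colon V\to\{\pm1\}$ realizing balance is constant along every positive edge, and positive-connectedness then makes $f$ globally constant, so every edge is positive. This is airtight.

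The paper takes a direct test-vector approach instead. It fixes a Harary bipartition $A,A^{C}$, sets $f=\mathbf{1}_{A}+a\,\mathbf{1}_{A^{C}}$ for some $a\neq 1$, and computes
\[
f^{T}(L_{+}-\epsilon L_{-})f=-\epsilon\sum_{(i,j)\in E_{-}}w_{ij}^{-}(1-a)^{2},
\]
then asserts this is strictly negative. That final strict inequality tacitly assumes $E_{-}\neq\varnothing$, which your argument shows cannot hold under the stated hypotheses; equivalently, one side of the bipartition must be empty, the test function is constant, and the quadratic form vanishes. So the paper's calculation is formally correct but its concluding ``$<0$'' is unjustified: under the hypotheses one actually has $L_{+}-\epsilon L_{-}=L_{+}$, which \emph{is} positive semidefinite. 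Your route has the merit of making this tension explicit, and your closing reinterpretation---that the statement should be read either as vacuous under a standing convention $E_{-}\neq\varnothing$, or as ``no finite consensus index exists''---matches the intent of the surrounding discussion while being honest about what is actually provable.
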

	
	\begin{proof}
		Assume $A, A^C\subset V$ is a Harary bipartition of $(G,\sigma,w)$. For any $a\neq 1$, define $f:V\rightarrow \mathbb{R}$ as follows:
		\[ 
		f(i)=
		\begin{cases}
			1&\text{if}\, i\in A\\
			a&\text{if}\, i\in A^C
		\end{cases}
		 \]
		 Then we have
		 \[ f^T\left(\Le\right)f=-\epsilon\sum_{(i,j)\in E_{-}}w^{-}_{ij}(1-a)^2<0 \]		 
	\end{proof}
	
	\subsection{The spectra gap of $\epsilon-$repelling Laplacian }
	In this section, we prove Theorem \ref{eigdeg} and Theorem \ref{main2}. Recall Theorem \ref{eigdeg}, it derives the upper bound of the second smallest eigenvalue of $\epsilon-$repelling Laplacian in terms of the positive degree, negative degree and $\epsilon$.

	\begin{proof}[Proof of Theorem \ref{eigdeg}]
		Since $G$ is non-complete, there exist two distinct vertices $x$ and $y$ such that $x\nsim y$.
		Define $f:V\rightarrow \mathbb{R}$ as follows:
		\[ f:=\mathbf{1}_{x}-\mathbf{1}_{y} \]
		where $\mathbf{1}_{x},\mathbf{1}_{y} $ are the characteristic functions of $x, y$.
		Then we have 
		\begin{align*}
			\mathbf{1}_{x}^T(\Le)\mathbf{1}_{x}&=\frac{1}{2}\sum_{z,h\in V}(w_{zh}^{+}-\epsilon w_{zh}^{-})(\mathbf{1}_{x}(z)-\mathbf{1}_{x}(h))^2\\
			&=\frac{1}{2}(\sum_{z=x,h\neq x}+\sum_{z\neq x,h=x})(w_{zh}^{+}-\epsilon w_{zh}^{-})(\mathbf{1}_{x}(z)-\mathbf{1}_{x}(h))^2\\
			&=\sum_{z=x,h\neq x} (w_{zh}^{+}-\epsilon w_{zh}^{-})(\mathbf{1}_{x}(z)-\mathbf{1}_{x}(h))^2\\
			&=\sum_{h\neq x}(w_{xh}^{+}-\epsilon w_{xh}^{-})\\
			&=d_x^{+}-\epsilon d_x^{-}			
		\end{align*}
		Similarly, $\mathbf{1}_{y}^T(\Le)\mathbf{1}_{y}=d_y^{+}-\epsilon d_y^{-} $.
		
		It is obvious that the product function of $\mathbf{1}_{x},\mathbf{1}_{y}$ is zero. We claim that the product function of $(\Le)\mathbf{1}_{x},\mathbf{1}_{y}$ is zero. In fact, if $z\neq y$, $\mathbf{1}_{y}(z)=0$. If $z=y$, $(\Le)\mathbf{1}_{x}(z)=\mathbf{1}_{x}(y)-\sum_{h\sim y}(w_{yh}^{+}-\epsilon w_{yh}^{-})(\mathbf{1}_{x}(y)-\mathbf{1}_{x}(h))^2=0 $ since $x\nsim y$ and $h$ can not take $x$. 
		Similarly, the product of $(\Le)\mathbf{1}_{y},\mathbf{1}_{x}$ is also zero. 
		Therefore, \[ f^T(\Le)f=\mathbf{1}_{x}^T(\Le)\mathbf{1}_{x}+\mathbf{1}_{y}^T(\Le)\mathbf{1}_{y}=d_x^{+}-\epsilon d_x^{-}+d_y^{+}-\epsilon d_y^{-} \]
		\[ f^Tf=\mathbf{1}_{x}^T\mathbf{1}_{x}+\mathbf{1}_{y}^T\mathbf{1}_{y}=2\]
		So the Raleigh quotient of $\Le$ is 
		\begin{align*}
			\mathcal{R}(f)&=\frac{f^T(\Le)f}{f^Tf}\\
			&=\frac{d_x^{+}-\epsilon d_x^{-}+d_y^{+}-\epsilon d_y^{-}}{2}\\
			&\le M_{\epsilon}
		\end{align*}
		Since $f\perp \vec{1}$ and $\lambda_2^{(\epsilon)}=\inf\limits_{g\perp \vec{1}}\mathcal{R}(g)$, we have 
		\[ \lambda_2^{(\epsilon)}\le \mathcal{R}(f) \le M_{\epsilon}\]
	\end{proof}
	
	Next, we prove Theorem \ref{main2}, which gives a upper bound of $\lambda_2^{(\epsilon)}$ in the case where the signed graph is not only positive-connected but also negative-connected.

	\begin{proof}[Proof of Theorem \ref{main2}]
		Assume $f$ is an eigenfunction of $\lambda_2^{(\epsilon)}$ with $\sum_{x\in V}f^2(x)=1$, then $f\perp \vec{1}$ and $\lambda_2^{(\epsilon)}=\mathcal{R}(f)$. Assume $\max_{x\in V}f(x)=\max_{x\in V}|f(x)|$, otherwise we replace $f$ by $-f$. According to the pigeonhole principle, there exists a vertex $x_0$ such that $f(x_0)=\max_{x\in V}f(x)\ge \frac{1}{\sqrt{n}}$. From $f\perp \vec{1}$ we know there exists a vertex $y_0$ such that $f(y_0)<0$. Since $(G,\sigma,w)$ is negative-connected, let $x_0=z_1\sim z_2\sim \cdots\sim z_m=y_0$ be a path consisting of edges with negative sign.
		The Rayleigh quotient $\mathcal{R}(f) $ can split into two parts:
		\begin{align*}
			\mathcal{R}(f)&=\sum_{(i,j) \in E^+}w_{ij}^{+}(f(i)-f(j))^2-\epsilon \sum_{(i,j) \in E^-}w_{ij}^{-}(f(i)-f(j))^2\\
			&:=A-\epsilon B
		\end{align*}
	Next we estimate $A$ as follows:
	\begin{align*}
		A&\le 2\sum_{(i,j) \in E^+}w_{ij}^{+}(f^2(i)+f^2(j))=\sum_{i,j\in V}w_{ij}^{+}(f^2(i)+f^2(j))\\
		&=2\sum_{i,j\in V}w_{ij}^{+}f^2(i)=2\sum_{i\in V}d_{i}^{+}f^2(i)\le 2d_{\text{max}}^{+}
	\end{align*}
	Then we derive a lower bound for $B$:
	\begin{align*}
		B&\ge \mu_{-}^0\sum_{(i,j) \in E^-}(f(i)-f(j))^2\ge \mu_{-}^0\sum_{k=1}^{m-1}(f(z_k)-f(z_{k+1}))^2\\
		&\ge \frac{\mu_{-}^0}{m}\left(\sum_{k=1}^{m-1}f(z_k)-f(z_{k+1})\right)^2= \frac{\mu_{-}^0}{m}(f(x_0)-f(y_0))^2\ge \frac{\mu_{-}^0}{Dn}
	\end{align*}
	Therefore, \[  \lambda_2^{(\epsilon)}=\mathcal{R}(f)\le 2d_{\text{max}}^{+}-\frac{\epsilon\mu_{-}^0}{Dn} \]
	\end{proof}
	\subsection{The simplex of $\epsilon-$repelling Laplacian}
	 In this section, we always assume $(G,\sigma,w)$ is positive-connected with $n+1$ vertices, the underlying graph $G$ is connected and $\epsilon$ is a positive number less than the signed graph's consensus index $\epsilon_0$. 
	 
	 Compared with K.Devriendt\cite{devriendt2022effective}'s work which corresponded the graph Laplacian to a hyperacute simplex for proving effective resistance is a distance on graph, we establish a correspondence  between the $\epsilon-$repelling Laplacian on $(G,\sigma,w)$ and $n-$simplex and define a new metric on signed graph. Our work is inspired by \cite[Chapter 3]{fiedler2011matrices} about the signed graph of the simplex, which uses the sign on graph to distinguish whether the dihedral angles of simplex are obtuse or acute.
	 
	 Since $\epsilon-$repelling Laplacian satisfies the spectrum property $(i)_{\sigma}-(iii)_{\sigma}$ in \cite{devriendt2022effective}, the correspondence stated in following lemma is right as a corollary of \cite[Proposition 2]{devriendt2022effective}.
	\begin{lemma}
		There exists some $n-$simplex with vertex matrix $S=\left[v_1,\cdots,v_{n+1}\right]^T$ whose centroid is the original point in $\mathbb{R}^n$ such that $\epsilon-$repelling Laplacian is the pseudoinverse of its canonical Gram matrix. That is, 
		\[ \Le=(SS^T)^{\dagger} \]
		We call this simplex $\mathcal{S}_{\epsilon}$ the simplex of $\epsilon-$repelling Laplacian.
	\end{lemma}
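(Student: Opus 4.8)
The plan is to check that $\Le$ meets the hypotheses of the simplex correspondence and then to read off the statement by inverting twice. First I would isolate the three structural facts we already possess about $L := \Le$: it is symmetric; it is positive semidefinite; and its kernel is exactly $\operatorname{span}(\vec 1)$, since $L\vec 1 = 0$ while, by the definition of the consensus index together with $\epsilon < \epsilon_0$, the second smallest eigenvalue $\lambda_2^{(\epsilon)}$ is strictly positive, i.e.\ $\operatorname{rank}(L) = n$ (here the vertex set has $n+1$ elements). These are precisely the spectrum properties $(i)_\sigma$--$(iii)_\sigma$ of \cite{devriendt2022effective}, so the lemma is a corollary of \cite[Proposition 2]{devriendt2022effective}; the remaining paragraphs recall why that proposition delivers exactly the stated simplex.

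Next I would pass to $M := L^\dagger$. Moore--Penrose inversion preserves symmetry, positive semidefiniteness, rank and kernel while inverting the nonzero eigenvalues, so $M$ is symmetric positive semidefinite of rank $n$ with $\ker M = \operatorname{span}(\vec 1)$. Writing the spectral decomposition of $M$, set $S := U\Lambda^{1/2}$, where the columns of $U$ are the $n$ orthonormal eigenvectors of $M$ with positive eigenvalue and $\Lambda$ is the diagonal matrix of those eigenvalues; then $S$ is an $(n+1)\times n$ matrix of full column rank and $M = SS^T$. Declare $v_1,\dots,v_{n+1}\in\mathbb{R}^n$ to be the rows of $S$, so $S = [v_1,\dots,v_{n+1}]^T$.

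Then I would verify the two geometric claims. For the centroid: $SS^T\vec 1 = M\vec 1 = 0$, and since $S$ has trivial kernel this forces $S^T\vec 1 = 0$, i.e.\ $\sum_i v_i = 0$, so the centroid is the origin. For non-degeneracy --- that the $v_i$ span a genuine $n$-simplex --- note that $S^T\vec 1 = 0$ says the $n$-dimensional column space of $S$ is orthogonal to $\vec 1$, hence the $(n+1)\times(n+1)$ matrix $[\,S \mid \vec 1\,]$ is invertible, which is precisely affine independence of $v_1,\dots,v_{n+1}$. Finally, $SS^T = M = L^\dagger$ gives $(SS^T)^\dagger = (L^\dagger)^\dagger = L = \Le$ by the identity $(A^\dagger)^\dagger = A$; and $SS^T$ is by construction the (canonical) Gram matrix of the position vectors of $\mathcal{S}_\epsilon$, so this is the asserted formula.

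The one step where the hypotheses genuinely bite, and hence the main point to get right, is the rank identity $\operatorname{rank}(\Le) = n$, equivalently $\lambda_2^{(\epsilon)} > 0$: this uses positive-connectedness --- so that $\lambda_2(L_+) > 0$ as in Lemma \ref{lem1} --- together with $\epsilon < \epsilon_0$, and it is exactly what prevents the Gram matrix $SS^T$ from degenerating and forces $v_1,\dots,v_{n+1}$ to span all of $\mathbb{R}^n$. Everything else is routine bookkeeping with orthogonal projections and the Moore--Penrose inverse, so I do not expect a serious obstacle beyond matching our notation to that of \cite{devriendt2022effective}.
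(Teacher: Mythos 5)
Your proposal is correct and follows essentially the same route as the paper, which simply observes that $\Le$ satisfies the spectral hypotheses $(i)_\sigma$--$(iii)_\sigma$ and invokes \cite[Proposition 2]{devriendt2022effective}; the paper gives no further argument. Your additional unwinding of that proposition --- factoring $L_\epsilon^{\dagger}=SS^T$ via the spectral decomposition, deducing $S^T\vec 1=0$ for the centroid and affine independence from $\operatorname{rank}(\Le)=n$, then applying $(A^\dagger)^\dagger=A$ --- is a faithful expansion of the cited result rather than a different approach, and it correctly identifies $\lambda_2^{(\epsilon)}>0$ as the one place the hypotheses are used.
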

	
	We denote by $\mathcal{S}$  the hyperacute simplex of Laplacian on the underlying graph of signed graph in K.Devriendt\cite{devriendt2022effective}'s work.  Compare $\mathcal{S}_{\epsilon}$ with $\mathcal{S}$, we can obtain some information about the signed graph. First of all, the dihedral angles of $\mathcal{S}_{\epsilon} $ are different from $\mathcal{S}$ whose dihedral angles are all acute.  Denote by $\phi_{ij}^{\sigma}$ the dihedral angle between the sub-simplices $\mathcal{S}_{\epsilon}\setminus \{v_i\} $ and $\mathcal{S}_{\epsilon}\setminus \{v_i\}$. Then $\phi_{ij}^{\sigma}>\frac{\pi}{2} $  if and only if $\sigma_{ij}=-1$. Besides, the following lemma shows that we can identify the vertices that are not connected to negative edges from the difference of altitudes of $\mathcal{S}_{\epsilon}$ and $\epsilon$.
	
	\begin{lemma}
		For any $i=1,2,\cdots,n+1$, let $l_i(\mathcal{S}_{\epsilon}), l_i(\mathcal{S})$ be the lengths of the altitudes from $v_i$ in $\mathcal{S}_{\epsilon} $ and $\mathcal{S}$ respectively. Then 
		\[ l_i(\mathcal{S}_{\epsilon})\ge l_i(\mathcal{S}) \]
		where the equality holds if and only if all the edges connected to the node $i$ are positive.
	\end{lemma}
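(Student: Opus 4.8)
The plan is to reduce the whole statement to one closed-form identity: the altitude of the simplex attached to a Laplacian, expressed through that Laplacian's diagonal. Concretely, from \cite{devriendt2022effective} one has that if $M$ is an $(n+1)\times(n+1)$ matrix obeying the spectrum properties $(i)_\sigma$--$(iii)_\sigma$ and $\mathcal{T}$ is the associated $n$-simplex (centroid at the origin, $M=(TT^{T})^{\dagger}$, in the normalisation used throughout this section), then the altitude dropped from the vertex $v_i$ onto the opposite facet $\mathcal{T}\setminus\{v_i\}$ has length $l_i(\mathcal{T})=M_{ii}^{-1/2}$. Since $\Le$ satisfies these spectrum properties (as used in the preceding lemma) and $Q$ does as well (Devriendt's original setting), this identity applies simultaneously to $\mathcal{S}_{\epsilon}$ with $M=\Le$ and to $\mathcal{S}$ with $M=Q$. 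I would either record this as a short lemma or simply invoke it; it is the only external ingredient, and everything after it is a one-line computation.

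Second, I would read the relevant diagonal entries off the matrix formulas already displayed in the paper. From the explicit form of $\LE$ we get $(\Le)_{ii}=d^{+}_i-\epsilon d^{-}_i$, whereas $Q_{ii}=d_i=d^{+}_i+d^{-}_i$ because $w=w^{+}+w^{-}$ on $E$. Both are strictly positive: $Q_{ii}>0$ on a connected graph with at least two vertices, and $(\Le)_{ii}>0$ because $\Le$ is positive semidefinite of rank $n$ on a connected underlying graph, so no diagonal entry can vanish --- a zero diagonal entry of a positive semidefinite matrix forces the entire row and column to vanish, which would put $\mathbf{1}_i$ in $\ker\Le=\mathrm{span}\{\vec1\}$, impossible for $n+1\ge 2$. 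Hence $l_i(\mathcal{S}_{\epsilon})=(d^{+}_i-\epsilon d^{-}_i)^{-1/2}$ and $l_i(\mathcal{S})=(d^{+}_i+d^{-}_i)^{-1/2}$ are well defined.

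Third comes the comparison. Since $\epsilon>0$ and $d^{-}_i\ge 0$,
\[ d^{+}_i-\epsilon d^{-}_i\ \le\ d^{+}_i\ \le\ d^{+}_i+d^{-}_i, \]
and taking reciprocal square roots, an order-reversing map on the positive reals, yields $l_i(\mathcal{S}_{\epsilon})\ge l_i(\mathcal{S})$. For the equality clause, this chain collapses to an equality exactly when $(1+\epsilon)d^{-}_i=0$; as $1+\epsilon>0$ this is equivalent to $d^{-}_i=0$, i.e. to the vertex $i$ being incident to no negative edge. Conversely, if $d^{-}_i=0$ then $(\Le)_{ii}=d^{+}_i=Q_{ii}$ and the two altitudes literally coincide, closing the equivalence.

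I do not expect a real obstacle here: once the altitude formula is granted, the argument is pure bookkeeping. The two points I would be careful about are (a) the non-degeneracy $(\Le)_{ii}>0$, which I handle via positive semidefiniteness together with connectedness of $G$ as above, and (b) making explicit that the altitude formula I use is simply the specialisation of the corresponding statement in \cite{devriendt2022effective} --- the same reference already cited to produce $\mathcal{S}_{\epsilon}$ --- so that no new machinery is introduced.
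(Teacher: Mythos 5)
Your proof is correct and takes essentially the same route as the paper: both rest on the altitude identity $l_i=((L_{+}-\epsilon L_{-})_{ii})^{-1/2}$ (the paper cites \cite[Thm2.1.1]{fiedler2011matrices} for this rather than \cite{devriendt2022effective}) and then compare the diagonal entries $d_i^{+}-\epsilon d_i^{-}\le d_i^{+}+d_i^{-}$, with equality precisely when $d_i^{-}=0$. Your additional verification that the diagonal entries are strictly positive is a small piece of rigor the paper's one-line proof omits, but it does not change the argument.
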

	
	\begin{proof}
		The Gramian of $\mathcal{S}_{\epsilon}$ is $\Le$ and the Gramian of $\mathcal{S}$ is $L_{+}+L_{-}$. Assume $\Le=\left[a_{ij}\right]$ and $L_{+}+L_{-}=\left[b_{ij}\right] $. Then $a_{ii}\le b_{ii}$ for any $i=1,\cdots,n+1$ and the equality holds if and only if $d_i^{-}=0$. From \cite[Thm2.1.1]{fiedler2011matrices}, we know 
		\[ l_i(\mathcal{S}_{\epsilon})=\frac{1}{\sqrt{a_{ii}}}\ge \frac{1}{\sqrt{b_{ii}}}=l_i(\mathcal{S}) \]
	\end{proof}

	\begin{example} Consider a positive-connected signed $3-$cycle in Figure \ref{fig1}, then 
		
			\[
			L_{+} = \begin{pmatrix}
				2 & -1 & -1 \\
				-1 & 1 & 0 \\
				-1 & 0 & 1
			\end{pmatrix}
			\quad
			\text{Spec}(L_{+})=\{3,1,0\}
			\]

			\[
			L_{-} = \begin{pmatrix}
				0 &  &  \\
				 & 1 & -1 \\
				 & -1 & 1
			\end{pmatrix}\quad
			\text{Spec}(L_{-})=\{2,0,0\}
			\]
		Then $\frac{\lambda_2(L_{+})}{\lambda_3(L_{-})}=\frac{1}{2}$ and we take $\epsilon$ as $\frac{1}{4}$. We can compute the pesudoinverse of $\Le$ as follows and the vertex matrix of the corresponding simplex:
		\begin{align*}
			(\Le)^{\dagger}&=
			\begin{pmatrix}
				0.222&-0.111&-0.111\\
				-0.111&1.055&-0.944\\
				-0.111&-0.944&1.055
			\end{pmatrix}\\
			&=\begin{pmatrix}
				0&0.471\\
				-1&-0.236\\
				1&-0.236
			\end{pmatrix}
			\begin{pmatrix}
				0&-1&1\\
				0.471&-0.236&-0.236
			\end{pmatrix}
		\end{align*}	
     So the simplex of $\frac{1}{4}-$repelling Laplacian is a triangle in Figure \ref{fig2}. You can see the sign on the edge $\overline{v_2v_3}$ is $-1$ and the dihedral angle between the sub-simplex $\overline{v_1v_3} $ and $\overline{v_1v_2}$ is obtuse.
	\end{example}
	 
	\begin{figure}[htbp]
		\centering
		\begin{minipage}{0.4\textwidth}
			\centering
			\begin{tikzpicture}
				\draw[thick] (0,0) -- (2,0) node[midway, below] {+1} 
				-- (1,1.732) node[midway, right] {-1}
				-- (0,0) node[midway, left] {+1} 
				-- cycle;
				
				\node at (0, 0) [below left] {1};
				\node at (2, 0) [below right] {2};
				\node at (1, 1.732) [above] {3};
			\end{tikzpicture}
			\caption{signed $3-$cycle}
			\label{fig1}
		\end{minipage}
		\hspace{0.04\textwidth}
		\begin{minipage}{0.4\textwidth}
			\centering
			\begin{tikzpicture}
				\draw[thick] (0,0.471) -- (-1,-0.236) -- (1,-0.236) -- cycle;
				\node at (0, 0.471) [above] {$(0, 0.471)$};
				\node at (0, 0.471) [below] {$v_1$};
				\node at (-1, -0.236) [below left] {$(-1, -0.236)$};
				\node at (-1, -0.236) [below right] {$v_2$};
				\node at (1, -0.236) [below right] {$(1, -0.236)$};	
				\node at (1, -0.236) [below left] {$v_3$};			
			\end{tikzpicture}
			\caption{The corresponding $3-$simplex}
			\label{fig2}
		\end{minipage}		
	\end{figure}
	
	\begin{definition}
		Define a matrix $\Omega^{\sigma}_{\epsilon}\in \mathbb{R}^{(n+1)\times(n+1)}$, called $\epsilon-$repelling matrix, as follows:
		\[ \Omega^{\sigma}_{\epsilon}(i,j):=(\vec{e_i}-\vec{e_j})^TL_{\epsilon}^{\dagger}(\vec{e_i}-\vec{e_j}) \] 
		We call $\Om(i,j)$ the $\epsilon-$repelling cost between $i$ and $j$.
		
	\end{definition}
	 Denote by $L_{\epsilon}^{\dagger}$ the pseudoinverse of $\Le$ and from the above theorem we know $L_{\epsilon}^{\dagger}=SS^T$ where $S$ is the vertex matrix of some $n-$simplex. 
	 \begin{proposition}
	 	$\sqrt{\Omega^{\sigma}_{\epsilon}(i,j)}$ is a distance on signed graph $\Gamma$.
	 \end{proposition}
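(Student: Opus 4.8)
The plan is to read $\Omega_{\epsilon}^{\sigma}$ off the simplex $\mathcal{S}_{\epsilon}$ and reduce the statement to the fact that the Euclidean distance on $\mathbb{R}^n$ is a metric. Write $S=[v_1,\dots,v_{n+1}]^T$ for the vertex matrix of $\mathcal{S}_{\epsilon}$, so that $L_{\epsilon}^{\dagger}=SS^T$. Since $S^T\vec{e_i}$ is exactly the $i$-th row of $S$, i.e. the point $v_i$, we obtain
\[
\Omega_{\epsilon}^{\sigma}(i,j)=(\vec{e_i}-\vec{e_j})^T SS^T (\vec{e_i}-\vec{e_j})=\bigl\| S^T(\vec{e_i}-\vec{e_j})\bigr\|^2=\|v_i-v_j\|^2 ,
\]
and therefore $\sqrt{\Omega_{\epsilon}^{\sigma}(i,j)}=\|v_i-v_j\|$ is literally the Euclidean distance between the two vertices $v_i,v_j$ of the simplex $\mathcal{S}_{\epsilon}$ in $\mathbb{R}^n$.

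Granting this identity, three of the four metric axioms are immediate: non-negativity and symmetry are visible from the formula $\|v_i-v_j\|$, and the triangle inequality $\sqrt{\Omega_{\epsilon}^{\sigma}(i,k)}\le \sqrt{\Omega_{\epsilon}^{\sigma}(i,j)}+\sqrt{\Omega_{\epsilon}^{\sigma}(j,k)}$ is inherited from the triangle inequality for the Euclidean norm. The one remaining axiom, $\sqrt{\Omega_{\epsilon}^{\sigma}(i,j)}=0 \iff i=j$, amounts to the pairwise distinctness of the $v_i$. The direction $i=j\Rightarrow \Omega_{\epsilon}^{\sigma}(i,j)=0$ is trivial. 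For the converse I would argue that if $v_i=v_j$ for some $i\ne j$, then the $i$-th and $j$-th rows of $SS^T=L_{\epsilon}^{\dagger}$ coincide, hence $(\vec{e_i}-\vec{e_j})^T L_{\epsilon}^{\dagger}=0$, i.e. $\vec{e_i}-\vec{e_j}\in\ker L_{\epsilon}^{\dagger}=\ker(\Le)$. By Lemma \ref{lem1} the kernel of $\Le$ is one-dimensional and spanned by $\vec{1}$, whereas $\vec{e_i}-\vec{e_j}$ is clearly not a scalar multiple of $\vec{1}$ when $i\ne j$ — a contradiction. (Equivalently, this is just the statement that the vertices of a genuine, non-degenerate $n$-simplex are affinely independent, in particular distinct.)

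There is no genuinely hard step here. The only points that need care are keeping straight that $S^T\vec{e_i}$ picks out the $i$-th \emph{row} (the point $v_i$) rather than a column in the identification $\Omega_{\epsilon}^{\sigma}(i,j)=\|v_i-v_j\|^2$, and the non-degeneracy of $\mathcal{S}_{\epsilon}$, which is precisely where the hypothesis $\epsilon<\epsilon_0$ enters via Lemma \ref{lem1} (nullity of $\Le$ equal to $1$). An alternative, more intrinsic route would bypass the embedding and verify the triangle inequality directly from the obtuse/acute dihedral-angle structure of $\mathcal{S}_{\epsilon}$ as in Devriendt's treatment of effective resistance, but the coordinate argument above is shorter and self-contained.
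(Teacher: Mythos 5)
Your proposal is correct and follows essentially the same route as the paper: both read off $\Omega^{\sigma}_{\epsilon}(i,j)=\|v_i-v_j\|^2$ from the factorization $L_{\epsilon}^{\dagger}=SS^T$ and inherit the triangle inequality from the Euclidean norm. Your additional verification of the separation axiom via the one-dimensional kernel of $\Le$ is a small completeness bonus that the paper's proof omits, but it does not change the argument.
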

	 \begin{proof}
	 	Assume $L_{\epsilon}^{\dagger}=SS^T$ where $S=\left[v_1,\cdots,v_{n+1}\right]^T$ is the vertex matrix of some $n-$simplex,
	 	\[ \Omega^{\sigma}_{\epsilon}(i,j)=\left(v_i-v_j\right)^T\left(v_i-v_j\right)=||v_i-v_j ||^2\ge 0\]
	 	\[ \sqrt{\Omega^{\sigma}_{\epsilon}(i,j)}=||v_i-v_j || \]
	 	Therefore, for $\forall i,j,k\in V$, 
	 	\[ \sqrt{\Omega^{\sigma}_{\epsilon}(i,j)}=||v_i-v_j ||\le||v_i-v_k ||+||v_k-v_j ||=\sqrt{\Omega^{\sigma}_{\epsilon}(i,k)}+\sqrt{\Omega^{\sigma}_{\epsilon}(j,k)} \]
	 \end{proof}
	 
	 \begin{remark}
	 	From \cite[Property 1]{devriendt2022effective}, we know the off-diagonal entries of pseudoinverse of canonical Gram matrix reflect whether the dihedral angles of simplex are obtuse, right or acute. Because the off-diagonal entries of $\Le$ are both positive and negative, the simplex of $\epsilon-$repelling Laplacian is not a hyperacute simplex and the angles of its triangular face may be obtuse, which results in $\Omega^{\sigma}_{\epsilon}(\cdot,\cdot) $ not being a distance.
	 \end{remark}
	 
	 \begin{lemma}
	 The signed resistance matrix $\Omega^{\sigma}_{\epsilon}$ is invertible.
	 \end{lemma}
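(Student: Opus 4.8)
The plan is to read off the invertibility of $\Om$ directly from the Gram–matrix description of the simplex $\mathcal{S}_{\epsilon}$ obtained above. Write $S=[v_1,\dots,v_{n+1}]^T\in\mathbb{R}^{(n+1)\times n}$ for its vertex matrix, so that $L_{\epsilon}^{\dagger}=SS^T$, and set $\mathbf{b}=(\lVert v_1\rVert^2,\dots,\lVert v_{n+1}\rVert^2)^T=\operatorname{diag}(L_{\epsilon}^{\dagger})$. Expanding $\Om(i,j)=\lVert v_i-v_j\rVert^2=\lVert v_i\rVert^2+\lVert v_j\rVert^2-2\langle v_i,v_j\rangle$ yields the identity
\[ \Om=\mathbf{b}\,\vec{1}^{\,T}+\vec{1}\,\mathbf{b}^{T}-2SS^T, \]
so $\Om$ is a rank-two symmetric perturbation of the negative semidefinite matrix $-2SS^T=-2L_{\epsilon}^{\dagger}$.

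The next step is to exploit the two structural facts we get for free from the simplex being centered at the origin, i.e. $S^T\vec{1}=\sum_i v_i=0$ (equivalently $\vec{1}^{\,T}S=0$). First, because the cross terms sum to $\lVert\sum_i v_i\rVert^2=0$,
\[ \vec{1}^{\,T}\Om\,\vec{1}=2(n+1)\,\vec{1}^{\,T}\mathbf{b}=2(n+1)\operatorname{tr}\bigl(L_{\epsilon}^{\dagger}\bigr)>0, \]
the positivity coming from $L_{\epsilon}^{\dagger}\succeq 0$ having rank $n\ge 1$. Second, letting $J=I-\tfrac{1}{n+1}\vec{1}\vec{1}^{\,T}$ be the orthogonal projection onto $\vec{1}^{\perp}$, we have $J\vec{1}=0$ and $JS=S$, so the two rank-one terms are annihilated on both sides and
\[ J\,\Om\,J=-2SS^T=-2L_{\epsilon}^{\dagger}. \]

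Finally I would pass to an orthonormal basis adapted to $\mathbb{R}^{n+1}=\operatorname{span}(\vec{1})\oplus\vec{1}^{\perp}$: choose $U\in\mathbb{R}^{(n+1)\times n}$ with $U^TU=I_n$ and $UU^T=J$. In the orthogonal basis $\bigl[\vec{1}/\sqrt{n+1}\mid U\bigr]$ the matrix $\Om$ takes the block form $\left(\begin{smallmatrix} a & \mathbf{c}^{T}\\ \mathbf{c} & \Omega_{\perp}\end{smallmatrix}\right)$ with $a=\vec{1}^{\,T}\Om\vec{1}/(n+1)>0$ and $\Omega_{\perp}=U^T\Om U=U^T(J\Om J)U=-2\,U^TL_{\epsilon}^{\dagger}U$. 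Since $L_{\epsilon}^{\dagger}$ is positive definite on $\vec{1}^{\perp}=\operatorname{range}(U)$, the block $\Omega_{\perp}$ is negative definite, hence nonsingular with $\Omega_{\perp}^{-1}\prec 0$; therefore the Schur complement of this block satisfies $a-\mathbf{c}^{T}\Omega_{\perp}^{-1}\mathbf{c}\ge a>0$, and
\[ \det\Om=\det(\Omega_{\perp})\bigl(a-\mathbf{c}^{T}\Omega_{\perp}^{-1}\mathbf{c}\bigr)\neq 0. \]
The only step requiring genuine care is verifying that $\Omega_{\perp}$ is \emph{strictly} negative definite, and this is exactly where the standing hypothesis $\epsilon<\epsilon_0$ enters, since Lemma \ref{lem1} is what guarantees $\ker L_{\epsilon}^{\dagger}=\operatorname{span}(\vec{1})$ and hence that $L_{\epsilon}^{\dagger}$ is strictly positive on $\vec{1}^{\perp}$. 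Everything else is routine bookkeeping with the rank-one terms, uniformly controlled by the centroid identity $\sum_i v_i=0$; the argument parallels the classical fact that the effective resistance matrix of a connected graph is nonsingular.
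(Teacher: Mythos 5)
Your proof is correct and rests on exactly the same two facts as the paper's: the decomposition $\Omega^{\sigma}_{\epsilon}=\zeta\vec{1}^T+\vec{1}\zeta^T-2L_{\epsilon}^{\dagger}$ forces the quadratic form to be strictly negative on $\vec{1}^{\perp}$ (since $\ker L_{\epsilon}^{\dagger}=\operatorname{span}(\vec{1})$) and strictly positive at $\vec{1}$. The only divergence is the final deduction: the paper jumps from these two special cases to ``$\vec{x}^T\Omega^{\sigma}_{\epsilon}\vec{x}\neq 0$ for all $\vec{x}\neq\vec{0}$,'' which as stated does not address general $\vec{x}$ mixing the two subspaces, whereas your block-diagonalization plus Schur-complement determinant (equivalently, an inertia count giving $n$ negative and one positive eigenvalue) closes that step rigorously; in this respect your write-up is tighter than the paper's.
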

	 
	 \begin{proof}
	 	Denote by $\zeta$ the vector consisting of the diagonal entries of $L_{\epsilon}^{\dagger}$. From definition, we have 
	 	\[ \Omega^{\sigma}_{\epsilon}=\zeta\vec{1}^T+\vec{1}\zeta^T-2L_{\epsilon}^{\dagger} \]
	 	$L_{\epsilon}^{\dagger}$ is positive semidefinite whose null space is spanned by all ones vector since it's the pesudoinverse of $\Le$. Then for any $\vec{x}\perp \vec{1}$:
	 	\[ \vec{x}^T\Omega^{\sigma}\vec{x}=-2\vec{x}^TL_{\epsilon}^{\dagger}\vec{x}<0 \]
	 	For $\vec{x}=c\vec{1}$ where $c$ is a constant:  
	 	\[ \vec{x}^T\Omega^{\sigma}_{\epsilon}\vec{x}=c^2\vec{1}^T\Omega^{\sigma}\vec{1}>0 \]
	 	Therefore, for any $\vec{x}\neq \vec{0}$, $\vec{x}^T\Omega^{\sigma}_{\epsilon}\vec{x}\neq0 $, implying that $\Omega^{\sigma}_{\epsilon}$ is invertible.
	 \end{proof}
	
	 From \cite[Equation (14)]{devriendt2022effective} and \cite[Thm2.1.1]{fiedler2011matrices}, we obtain a matrix identity on positive-connected signed graph in the following theorem.
	 
	 \begin{theorem}\label{eq2}
	 	\[ 
	 	-\frac{1}{2}\begin{pmatrix}
	 		0&\vec{1}^T\\
	 		\vec{1}&\Omega^{\sigma}_{\epsilon}
	 	\end{pmatrix}
	 	=\begin{pmatrix}
	 		4R^2&-2\vec{r}^T\\
	 		-2\vec{r}^T& \Le
	 	\end{pmatrix}^{-1}	 	
	 	 \]
	 	 where $R$ is the radius of the circumscribed hypersphere of the simplex of $\Le$ and $\vec{r}$ is barycentric coordinate of the circumcenter of the simplex of $\Le$.
	 \end{theorem}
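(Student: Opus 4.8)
The plan is to verify the identity by one block-matrix multiplication. Write $M:=\begin{pmatrix}0&\vec 1^{T}\\\vec 1&\Om\end{pmatrix}$ and $B:=\begin{pmatrix}4R^{2}&-2\vec r^{T}\\-2\vec r&\Le\end{pmatrix}$, where the bottom-left block is the column vector $-2\vec r$ (the transpose appearing there in the displayed statement being a typographical slip). Since both $-\tfrac12 M$ and $B$ are symmetric, it suffices to check $B\cdot\bigl(-\tfrac12 M\bigr)=I_{n+2}$ block by block. The algebraic inputs I would use are the decomposition $\Om=\zeta\vec 1^{T}+\vec 1\zeta^{T}-2L_{\epsilon}^{\dagger}$ established earlier (with $\zeta$ the vector of diagonal entries of $L_{\epsilon}^{\dagger}$; since $\mathcal S_{\epsilon}$ is centred at the origin and $L_{\epsilon}^{\dagger}=SS^{T}$ with $S=[v_1,\dots,v_{n+1}]^{T}$, one has $\zeta_i=\|v_i\|^{2}$), together with $\Le\vec 1=0$ and $\Le L_{\epsilon}^{\dagger}=L_{\epsilon}^{\dagger}\Le=I-\tfrac{1}{n+1}\vec 1\vec 1^{T}$, which hold because $\ker\Le=\operatorname{span}\{\vec 1\}$.

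The geometric input comes from \cite[Thm2.1.1]{fiedler2011matrices} (the content behind \cite[Eq.(14)]{devriendt2022effective}): one must express the circumcenter $c$ and the circumradius $R$ of $\mathcal S_{\epsilon}$ through $L_{\epsilon}^{\dagger}$ and $\zeta$. Writing $c=S^{T}\vec r$ with $\vec 1^{T}\vec r=1$ and using $Sv_i=L_{\epsilon}^{\dagger}\vec e_i$ and $S^{T}\vec 1=0$, the defining condition $\|v_i-c\|^{2}=R^{2}$ for all $i$ forces $\zeta_i-2v_i^{T}c=\tfrac{1}{n+1}\vec 1^{T}\zeta$ to be independent of $i$, which pins down
\[ \vec r=\tfrac12\,\Le\,\zeta+\tfrac{1}{n+1}\vec 1,\qquad 4R^{2}=\zeta^{T}\Le\,\zeta+\tfrac{4}{n+1}\vec 1^{T}\zeta, \]
the second formula also using $\|c\|^{2}=\tfrac14\zeta^{T}\Le\,\zeta$.

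Granting these, the four blocks come out quickly: the $(1,1)$ entry is $\vec r^{T}\vec 1=1$; the $(2,1)$ block is $-\tfrac12\Le\vec 1=\vec 0$; the $(1,2)$ block is $\vec r^{T}\Om-2R^{2}\vec 1^{T}$, and since $\vec r^{T}\Om=(\vec r^{T}\zeta)\vec 1^{T}+\zeta^{T}-2c^{T}S^{T}$ each component equals $\vec r^{T}\zeta+R^{2}-\|c\|^{2}=2R^{2}$ (using $\sum_i r_i\|v_i-c\|^{2}=R^{2}$); and the $(2,2)$ block is $\vec r\vec 1^{T}-\tfrac12\Le\,\Om$, where $\Le\,\Om=(\Le\zeta)\vec 1^{T}-2I_{n+1}+\tfrac{2}{n+1}\vec 1\vec 1^{T}$ combines with the formula for $\vec r$ to yield $I_{n+1}$. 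I expect the only genuine obstacle to be the geometric step — correctly translating the Euclidean notions of circumcenter and circumradius of $\mathcal S_{\epsilon}$ into the matrix expressions above via Fiedler's simplex dictionary; once $\vec r$ and $R$ are so identified, the rest is routine bookkeeping with the projection $I-\tfrac{1}{n+1}\vec 1\vec 1^{T}$.
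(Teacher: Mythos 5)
Your proposal is correct, and it is genuinely more than what the paper provides: the paper's entire argument for Theorem \ref{eq2} is the one-sentence appeal to \cite[Equation (14)]{devriendt2022effective} and \cite[Thm2.1.1]{fiedler2011matrices}, so the underlying mathematics (Fiedler's block-inverse identity between the extended Gram/Laplacian matrix and the extended distance matrix of a simplex) is the same, but you actually prove it rather than cite it. I checked your two geometric identifications --- $\vec r=\tfrac12 L_{\epsilon}\zeta+\tfrac{1}{n+1}\vec 1$, obtained by applying $L_{\epsilon}$ to the circumcenter condition $\zeta-2L_{\epsilon}^{\dagger}\vec r=(R^{2}-\Vert c\Vert^{2})\vec 1$, and $\vec r^{T}\zeta=R^{2}+\Vert c\Vert^{2}$ --- and they are exactly what make the $(1,2)$ and $(2,2)$ blocks of $B\cdot(-\tfrac12 M)$ collapse to $\vec 0^{T}$ and $I_{n+1}$; together with $L_{\epsilon}\vec 1=0$, $\vec 1^{T}\vec r=1$ and $L_{\epsilon}L_{\epsilon}^{\dagger}=I-\tfrac{1}{n+1}\vec 1\vec 1^{T}$, all four blocks check out, and since a one-sided inverse of a square matrix is the inverse, the verification is complete. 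You are also right that the lower-left block of the displayed statement should be the column vector $-2\vec r$; as printed the theorem contains a typographical slip. What your route buys is self-containedness plus explicit closed forms for $\vec r$ and $4R^{2}$ in terms of $L_{\epsilon}$ and $\zeta$ (the paper instead recovers $\vec r$ componentwise from the block identity only afterwards, in Proposition \ref{pro3}); what the paper's route buys is brevity, at the cost of leaving the reader to confirm that the simplex $\mathcal S_{\epsilon}$ of the $\epsilon$-repelling Laplacian satisfies the hypotheses of Fiedler's theorem.
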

	 
	  Next we compare the original resistance $\Omega(i,j)$ on the underlying graph $G$ with the signed resistance $\Omega^{\sigma}_{\epsilon}(i,j)$ on $\Gamma$. Recall the definition of resistance distance $\Omega(i,j)$ on general graphs 
	  \[ \Omega(i,j):=(\vec{e_i}-\vec{e_j})^TL^{\dagger}(\vec{e_i}-\vec{e_j}) \]
	  where $L=L_{+}+L_{-}$ is the Laplacian on the underlying graph $G$. First we give a proposition about the expression of the pesudoinverse of any symmetric matrix in terms of the null space.
	  
	  \begin{proposition}\label{pro1}
	  	Suppose $A$ is a symmetric and singular matrix and $\{\vec{z_1},\cdots,\vec{z_m}\}$ are the orthonormal basis of the null space of $A$, then the matrix 
	  	\[ A_0:=A+\sum_{j=1}^{m}\vec{z_j}\vec{z_j}^T \]
	  	 is invertible and the inverse is 
	  	 \[ A_0^{-1}=A^{\dagger}+\sum_{j=1}^{m}\vec{z_j}\vec{z_j}^T \]
	  \end{proposition}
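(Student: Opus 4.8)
The plan is to recognize the matrix $P := \sum_{j=1}^{m}\vec{z_j}\vec{z_j}^T$ as the orthogonal projector onto $\ker A$, and then verify the claimed formula by a direct multiplication, using the standard orthogonality relations between $A$, its Moore--Penrose pseudoinverse $A^{\dagger}$, and $P$ that follow from the symmetry of $A$.

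First I would record the basic facts. Since $\{\vec{z_1},\dots,\vec{z_m}\}$ is an orthonormal basis of $\ker A$, the matrix $P$ satisfies $P=P^T=P^2$, $\operatorname{range}P=\ker A$, and hence $AP=PA=0$. Because $A$ is symmetric, $\operatorname{range}A=(\ker A)^{\perp}$; moreover $A^{\dagger}$ is symmetric with $\operatorname{range}A^{\dagger}=\operatorname{range}A$ and $\ker A^{\dagger}=\ker A$. These are exactly the projector properties of the pseudoinverse, specialized to a symmetric matrix: $AA^{\dagger}$ and $A^{\dagger}A$ are both the orthogonal projector onto $\operatorname{range}A=(\ker A)^{\perp}$, so $AA^{\dagger}=A^{\dagger}A=I-P$, and since $\operatorname{range}P=\ker A=\ker A^{\dagger}$ while $\operatorname{range}A^{\dagger}\perp\ker A$, we also get $A^{\dagger}P=PA^{\dagger}=0$.

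Next I would simply expand the product. Using the relations above,
\[ (A+P)(A^{\dagger}+P)=AA^{\dagger}+AP+PA^{\dagger}+P^2=(I-P)+0+0+P=I, \]
and symmetrically $(A^{\dagger}+P)(A+P)=A^{\dagger}A+A^{\dagger}P+PA+P^2=(I-P)+P=I$. Hence $A_0=A+\sum_{j}\vec{z_j}\vec{z_j}^T$ admits the two-sided inverse $A^{\dagger}+\sum_{j}\vec{z_j}\vec{z_j}^T$, which in particular establishes that $A_0$ is invertible. (If one prefers, invertibility of $A_0$ can also be seen directly: $A_0$ preserves the orthogonal splitting $\mathbb{R}^n=\ker A\oplus\operatorname{range}A$, acting as the identity on $\ker A$ and as the invertible restriction of $A$ on $\operatorname{range}A$.)

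There is essentially no hard step here; the only point requiring care is the justification that $\operatorname{range}A^{\dagger}=(\ker A)^{\perp}$ and $A^{\dagger}A=AA^{\dagger}=I-P$, i.e. that the defining idempotent/symmetry axioms of the Moore--Penrose inverse, together with symmetry of $A$, force $A^{\dagger}$ to have the same range and kernel as $A$. Once these orthogonality relations are in hand, the computation above is immediate.
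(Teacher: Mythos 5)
Your proposal is correct and follows essentially the same route as the paper: both verify the identity by expanding the product $(A+P)(A^{\dagger}+P)$ and using $AP=PA^{\dagger}=0$, $P^2=P$, and $AA^{\dagger}=I-P$. Your write-up is somewhat more careful in justifying these orthogonality relations (the paper uses them implicitly), but the argument is the same.
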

	 
	 \begin{proof}
	 	\begin{align*}
	 		&\left(A+\sum_{j=1}^{m}\vec{z_j}\vec{z_j}^T\right)\left(A^{\dagger}+\sum_{j=1}^{m}\vec{z_j}\vec{z_j}^T\right)\\
	 		=&AA^{\dagger}+\sum_{j=1}^{m}A\vec{z_j}\vec{z_j}^T+\sum_{j=1}^{m}\vec{z_j}\vec{z_j}^TA+\sum_{i=1}^{m}\sum_{j=1}^{m}\vec{z_i}\vec{z_i}^T\vec{z_j}\vec{z_j}^T\\
	 		=&AA^{\dagger}+\sum_{j=1}^{m}\vec{z_j}\vec{z_j}^T\\
	 		=&I
	 	\end{align*}
	 	Similarly we have 
	 	\[ \left(A^{\dagger}+\sum_{j=1}^{m}\vec{z_j}\vec{z_j}^T\right)\left(A+\sum_{j=1}^{m}\vec{z_j}\vec{z_j}^T\right)=I \]
	 	Therefore, we finish the proof.
	 \end{proof}
	 
	 The next theorem shows $\epsilon-$repelling cost on signed graph is non-decreasing with respect to $\epsilon$  in case where $\epsilon<\epsilon_0$ may be negative. Note $\Le$ becomes the Laplacian on underlying graph $G$ when $\epsilon=-1$, hence we can compare the magnitudes of $\Omega_{\epsilon}^{\sigma}(i,j) $ and $\Omega(i,j) $.

	 \begin{proof}[Proof of Theorem \ref{main3}]
	 	Since both the orthonormal bases of null spaces $N(L_{+}-\epsilon_1L_{-})$ and $N(L_{+}-\epsilon_2L_{-})$ are $\{\frac{1}{\sqrt{n+1}}\vec{1}\}$, using Proposition (\ref{pro1}), we have
	 	\begin{align*}
	 		(L_{+}-\epsilon_1L_{-})^{\dagger}&=\left(L_{+}-\epsilon_1L_{-}+\frac{1}{n+1}\vec{1}\vec{1}^T\right)^{-1}-\frac{1}{n+1}\vec{1}\vec{1}^T\\
	 		(L_{+}-\epsilon_2L_{-})^{\dagger}&=\left(L_{+}-\epsilon_2L_{-}+\frac{1}{n+1}\vec{1}\vec{1}^T\right)^{-1}-\frac{1}{n+1}\vec{1}\vec{1}^T
	 	\end{align*}
	 	 Then
	 	 \begin{align*}
	 	 	&(L_{+}-\epsilon_2L_{-})^{\dagger}-(L_{+}-\epsilon_1L_{-})^{\dagger}\\
	 	 	=&\left(L_{+}-\epsilon_2L_{-}+\frac{1}{n+1}\vec{1}\vec{1}^T\right)^{-1}-\left(L_{+}-\epsilon_1L_{-}+\frac{1}{n+1}\vec{1}\vec{1}^T\right)^{-1}
	 	 \end{align*}
	 	 For convenience, we denote $L_{+}-\epsilon_2L_{-}+\frac{1}{n+1}\vec{1}\vec{1}^T$ by $B$ and $L_{+}-\epsilon_1L_{-}+\frac{1}{n+1}\vec{1}\vec{1}^T $ by $A$. Therefore,
	 	 \begin{align*} 
	 	 	B^{-1}=\left(I-\left(\epsilon_2-\epsilon_1\right)A^{-1}L_{-}\right)^{-1}A^{-1}
	 	 \end{align*}
	 	Let $\delta_{\epsilon_1}:=\frac{1}{2||A^{-1}||_2||L_{-}||_2}$ and if $0\le \epsilon_2-\epsilon_1\le\delta$:
	 	 \begin{align*}
	 	 	||(\epsilon_2-\epsilon_1)A^{-1}L_{-}||_2\le \frac{1}{2}<1
	 	 \end{align*}
	 	 So 
	 	 \begin{align*}
	 	 	\left(I-\left(\epsilon_2-\epsilon_1\right)A^{-1}L_{-}\right)^{-1}&=I+\sum_{i=1}^{\infty}\left[\left(\epsilon_2-\epsilon_1\right)A^{-1}L_{-}\right]^i\\
	 	 	B^{-1}-A^{-1}&=\sum_{i=1}^{\infty}\left[\left(\epsilon_2-\epsilon_1\right)A^{-1}L_{-}\right]^iA^{-1}
	 	 \end{align*}
	 	 In fact, for any $i\ge 1$, $\left(A^{-1}L_{-}\right)^iA^{-1}$ is positive semidefinite. When $i=2k$ is even, $\vec{x}^T\left(A^{-1}L_{-}\right)^iA^{-1}\vec{x}=\left(\left(L_{-}A^{-1}\right)^k\vec{x}\right)^TA^{-1}\left(\left(L_{-}A^{-1}\right)^k\vec{x}\right)\ge 0$. When $i=2k+1$ is odd, $\vec{x}^T\left(A^{-1}L_{-}\right)^iA^{-1}\vec{x}=\left(\left(A^{-1}L_{-}\right)^kA^{-1}\vec{x}\right)^TL_{-}\left(\left(A^{-1}L_{-}\right)^kA^{-1}\vec{x}\right)\ge 0$. Therefore,
	 	 \begin{align*}
	 	 	\Omega_{\epsilon_2}^{\sigma}(i,j)-\Omega_{\epsilon_1}^{\sigma}(i,j)&=\left(\vec{e_i}-\vec{e_j}\right)^T\left((L_{+}-\epsilon_2L_{-})^{\dagger}-(L_{+}-\epsilon_1L_{-})^{\dagger}\right)\left(\vec{e_i}-\vec{e_j}\right)\\
	 	 	&=\left(\vec{e_i}-\vec{e_j}\right)^T\left(B^{-1}-A^{-1}\right)\left(\vec{e_i}-\vec{e_j}\right)\ge 0
	 	 \end{align*} 	
	 	 Note $\delta_{\epsilon_1}$ is non-increasing with respect to $\epsilon_1$ since the second smallest eigenvalue of $A$ is non-increasing and always positive with respect to $\epsilon_1<\epsilon_0$ and $\delta_{\epsilon_0}>0$.
	 	 So $\Omega_{\epsilon}^{\sigma}(i,j)$ is non-decreasing on every closed interval whose length is no more than $\delta_{\epsilon_0}$ in $(-\infty,\epsilon_0)$. By transitivity we know $\Omega_{\epsilon}^{\sigma}(i,j)$ is non-decreasing in $(-\infty,\epsilon_0)$.
	 \end{proof}
	 
	 \section{Node and edge $\epsilon-$repelling curvature}

	  	

	 \begin{proposition}
	 	Suppose $B$ is an arbitrary symmetric matrix, then 
	 	\[ \sum_{i,j\in V}\vec{e_i}^T(\Le)B(\Le)\vec{e_j}\Omega_{\epsilon}^{\sigma}(i,j)=-2\text{tr}\left((\Le)B\right) \]
	 \end{proposition}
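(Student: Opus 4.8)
The plan is to expand the $\epsilon$-repelling cost in terms of the entries of the pseudoinverse $L_{\epsilon}^{\dagger}$ and reduce the double sum to a trace. Write $\Omega_{\epsilon}^{\sigma}(i,j)=(L_{\epsilon}^{\dagger})_{ii}+(L_{\epsilon}^{\dagger})_{jj}-2(L_{\epsilon}^{\dagger})_{ij}$ and set $M:=(\Le)B(\Le)$, which is symmetric since $B$ and $\Le$ are. Substituting, the left-hand side splits as
\[ \sum_{i,j\in V}M_{ij}(L_{\epsilon}^{\dagger})_{ii}+\sum_{i,j\in V}M_{ij}(L_{\epsilon}^{\dagger})_{jj}-2\sum_{i,j\in V}M_{ij}(L_{\epsilon}^{\dagger})_{ij}. \]

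The first step is to kill the two ``diagonal'' sums. Since $(\Le)\vec{1}=0$ we have $M\vec{1}=(\Le)B(\Le)\vec{1}=0$ and, by symmetry, $\vec{1}^{T}M=0$; hence $\sum_{i,j}M_{ij}(L_{\epsilon}^{\dagger})_{ii}=\sum_{i}(L_{\epsilon}^{\dagger})_{ii}\,(M\vec{1})_{i}=0$, and likewise the second sum vanishes. The remaining term is $-2\sum_{i,j}M_{ij}(L_{\epsilon}^{\dagger})_{ij}=-2\,\text{tr}(M L_{\epsilon}^{\dagger})$, because $M$ and $L_{\epsilon}^{\dagger}$ are symmetric, so $\sum_{i,j}M_{ij}(L_{\epsilon}^{\dagger})_{ij}$ is exactly the Frobenius inner product $\text{tr}(M L_{\epsilon}^{\dagger})$.

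It then remains to simplify $\text{tr}\big((\Le)B(\Le)L_{\epsilon}^{\dagger}\big)$. Here I would use that $P:=(\Le)L_{\epsilon}^{\dagger}=L_{\epsilon}^{\dagger}(\Le)$ is the orthogonal projection onto $\vec{1}^{\perp}$ — this is where the fact that $\Le$ is positive semidefinite with kernel $\mathrm{span}\{\vec{1}\}$ (Lemma~\ref{lem1}, applied to $n+1$ vertices) enters — together with the pseudoinverse identity $(\Le)L_{\epsilon}^{\dagger}(\Le)=\Le$, equivalently $(\Le)P=P(\Le)=\Le$. By cyclic invariance of the trace,
\[ \text{tr}\big((\Le)B(\Le)L_{\epsilon}^{\dagger}\big)=\text{tr}\big(L_{\epsilon}^{\dagger}(\Le)B(\Le)\big)=\text{tr}\big(PB(\Le)\big)=\text{tr}\big(B(\Le)P\big)=\text{tr}\big(B(\Le)\big)=\text{tr}\big((\Le)B\big), \]
which gives the claimed identity.

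There is no serious obstacle here; the computation is routine once the bookkeeping is set up. The one point to be careful about is that $B$ is an \emph{arbitrary} symmetric matrix and in particular need not commute with $\Le$ nor be annihilated by $\vec{1}$, so the reduction cannot proceed by simultaneous diagonalization: it must go entirely through the two facts that $\vec{1}$ annihilates $\Le$ on both sides and that $L_{\epsilon}^{\dagger}$ is a genuine pseudoinverse (so $(\Le)L_{\epsilon}^{\dagger}(\Le)=\Le$ and $(\Le)P=P(\Le)=\Le$). Both are already available from Lemma~\ref{lem1} and the definition of $L_{\epsilon}^{\dagger}$, so no new input is needed.
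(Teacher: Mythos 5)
Your proof is correct and follows essentially the same route as the paper's: expand $\Omega_{\epsilon}^{\sigma}(i,j)$ into entries of $L_{\epsilon}^{\dagger}$, kill the two diagonal sums using $(\Le)\vec{1}=0$, and reduce the cross term via cyclic invariance of the trace and the pseudoinverse identity $(\Le)L_{\epsilon}^{\dagger}(\Le)=\Le$. The only cosmetic difference is that you phrase the last step through the projection $P=(\Le)L_{\epsilon}^{\dagger}$, which the paper does implicitly.
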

	 \begin{proof}
	 	Note that
	 	\[ \Omega_{\epsilon}^{\sigma}(i,j)=\vec{e_i}^T(\Le)^{\dagger}\vec{e_i}+\vec{e_j}^T(\Le)^{\dagger}\vec{e_j}-2\vec{e_i}^T(\Le)^{\dagger}\vec{e_j} \]
	 	and since $(\Le)\vec{1}=0$
	 	\begin{align*}
	 		&\sum_{i,j\in V}\vec{e_i}^T(\Le)B(\Le)\vec{e_j}\vec{e_i}^T(\Le)^{\dagger}\vec{e_i}\\
	 		=&\sum_{i\in V}\vec{e_i}^T(\Le)B(\Le)\vec{1}\vec{e_i}^T(\Le)^{\dagger}\vec{e_i}=0\\
	 		&\sum_{i,j\in V}\vec{e_i}^T(\Le)B(\Le)\vec{e_j}\vec{e_j}^T(\Le)^{\dagger}\vec{e_j}\\
	 		=&\sum_{j\in V}\vec{1}^T(\Le)B(\Le)\vec{1}\vec{e_i}^T(\Le)^{\dagger}\vec{e_i}=0
	 	\end{align*}
	 	Then we have
	 	\begin{align*}
	 		&\sum_{i,j\in V}\vec{e_i}^T(\Le)B(\Le)\vec{e_j}\Omega_{\epsilon}^{\sigma}(i,j)\\
	 		=&-2\sum_{i,j\in V}\vec{e_j}^T(\Le)B(\Le)\vec{e_i}\vec{e_i}^T(\Le)^{\dagger}\vec{e_j}\\
	 		=&-2\text{tr}\left((\Le)B(\Le)(\Le)^{\dagger}\right)\\
	 		=&-2\text{tr}\left((\Le)(\Le)^{\dagger}(\Le)B\right)\\
	 		=&-2\text{tr}\left((\Le)B\right)		
	 	\end{align*} 	
	 \end{proof}
	 
	Take $B$ as $(\Le)^{\dagger}$ into the above proposition, we get the following lemma.
	 \begin{lemma}\label{lemma2}
	 	\[ \sum_{i,j\in V}\left(w^{+}_{ij}-\epsilon w^{-}_{ij}\right)\Omega_{\epsilon}^{\sigma}(i,j)=2\left(|V(G)|-1\right) \]
	 \end{lemma}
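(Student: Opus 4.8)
The plan is to specialize the preceding proposition to the choice $B=(\Le)^{\dagger}$ and then simplify both sides. On the left-hand side the matrix sandwiched between the $\Omega_{\epsilon}^{\sigma}$-weights becomes $(\Le)(\Le)^{\dagger}(\Le)$, which equals $\Le$ by the defining property of the Moore--Penrose pseudoinverse. Hence $\vec{e_i}^T(\Le)(\Le)^{\dagger}(\Le)\vec{e_j}=\LE(i,j)$, and reading off the entries from the explicit form of $\LE$ given earlier, this equals $-(w_{ij}^{+}-\epsilon w_{ij}^{-})$ whenever $i\neq j$. The diagonal terms $i=j$ drop out of the sum because $\Omega_{\epsilon}^{\sigma}(i,i)=0$ (equivalently $v_i-v_i=0$ in the simplex picture). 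So the left-hand side collapses to $-\sum_{i,j\in V}(w_{ij}^{+}-\epsilon w_{ij}^{-})\Omega_{\epsilon}^{\sigma}(i,j)$.

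For the right-hand side I would compute $-2\,\text{tr}\big((\Le)(\Le)^{\dagger}\big)$ by noting that $(\Le)(\Le)^{\dagger}$ is the orthogonal projection onto the range of $\Le$, so its trace equals $\operatorname{rank}(\Le)$. Under the standing assumption of this section that $(G,\sigma,w)$ has $n+1$ vertices and $\epsilon<\epsilon_0$, Lemma \ref{lem1} gives $\operatorname{rank}(\Le)=n=|V(G)|-1$. Thus the right-hand side is $-2(|V(G)|-1)$.

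Equating the two expressions and cancelling the common sign yields $\sum_{i,j\in V}(w_{ij}^{+}-\epsilon w_{ij}^{-})\Omega_{\epsilon}^{\sigma}(i,j)=2(|V(G)|-1)$, which is the claim. There is no genuine obstacle here, since the substantive identity was already established in the proposition; the only points needing care are the sign bookkeeping — the off-diagonal entries of $\LE$ are $-(w_{ij}^{+}-\epsilon w_{ij}^{-})$, i.e.\ carry the opposite sign to the weights appearing in the lemma — and the vanishing of the diagonal contribution, which lets us replace the sum over $i\neq j$ by the full sum over $i,j\in V$.
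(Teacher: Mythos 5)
Your proposal is correct and follows exactly the route the paper intends: the paper's proof is the single line ``take $B=(\Le)^{\dagger}$ in the preceding proposition,'' and you have simply filled in the details it leaves implicit (the identity $(\Le)(\Le)^{\dagger}(\Le)=\Le$, the sign of the off-diagonal entries of $\LE$, the vanishing of the diagonal terms since $\Omega_{\epsilon}^{\sigma}(i,i)=0$, and $\operatorname{tr}\bigl((\Le)(\Le)^{\dagger}\bigr)=\operatorname{rank}(\Le)=|V(G)|-1$). No gaps.
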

	 
	 Inspired by the method of defining node discrete curvature which was proposed by S.Steinerberger\cite{Steinerberger2017} through a distance matrix's equilibrium solution, though $\Omega_{\epsilon}^{\sigma}$ may not be distance matrix. We define the Steinerberger's curvature on positive connected signed graph with respect to $\epsilon-$repelling matrix as node $\epsilon-$repelling curvature. 
	  
	  \begin{definition}
	  	Suppose $\Gamma=(G,\sigma,w)$ is a positive connected signed graph with $n$ vertices and $\epsilon_0$ is its consensus index. For any fixed $0<\epsilon<\epsilon_0$, we call the unique solution $\tau_{\epsilon}:V(G)\rightarrow \mathbb{R}^n$ of the following equation as node signed curvature:
	  	\[ \Omega_{\epsilon}^{\sigma}\tau_{\epsilon}=n\vec{1} \]
	  \end{definition}
	  
	  The next proposition give an explicit expression of $\tau_{\epsilon}$ in terms of the edge weight and $\epsilon-$repelling
	  cost. 
	  \begin{proposition}\label{pro3}
	  	\[ \tau_{\epsilon}(i)=\phi_{\epsilon}\left(1-\frac{1}{2}\sum_{j\sim i}(w^{+}_{ij}-\epsilon w^{-}_{ij})\Omega_{\epsilon}^{\sigma}(i,j)\right) \]
	  	where $\phi_{\epsilon}=n\vec{1}^T(\Omega_{\epsilon}^{\sigma})^{-1}\vec{1}$
	  \end{proposition}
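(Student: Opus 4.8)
The plan is to start from the defining equation $\Omega_{\epsilon}^{\sigma}\tau_{\epsilon}=n\vec{1}$ and solve for $\tau_{\epsilon}$ by inverting $\Omega_{\epsilon}^{\sigma}$, which is legitimate since an earlier lemma guarantees $\Omega_{\epsilon}^{\sigma}$ is invertible. So $\tau_{\epsilon}=n(\Omega_{\epsilon}^{\sigma})^{-1}\vec{1}$, and the task reduces to computing $(\Omega_{\epsilon}^{\sigma})^{-1}\vec{1}$ in a useful closed form. First I would recall the representation $\Omega_{\epsilon}^{\sigma}=\zeta\vec{1}^T+\vec{1}\zeta^T-2L_{\epsilon}^{\dagger}$ established in the proof that $\Omega_{\epsilon}^{\sigma}$ is invertible, where $\zeta$ is the vector of diagonal entries of $L_{\epsilon}^{\dagger}$. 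The idea is to left-multiply this identity by $\Le$: since $(\Le)\vec{1}=0$ and $(\Le)L_{\epsilon}^{\dagger}=I-\tfrac{1}{n}\vec{1}\vec{1}^{T}$ (the orthogonal projection off $\vec{1}$, as $L_\epsilon^\dagger$ is the pseudoinverse and $n=|V(G)|$ here), applying $\Le$ to $\Omega_{\epsilon}^{\sigma}$ annihilates the $\zeta\vec{1}^{T}$ term and turns the rest into an explicit combination of $I$, $\vec{1}\vec{1}^{T}$ and $\vec{1}\zeta^{T}$.

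Concretely, I expect $(\Le)\Omega_{\epsilon}^{\sigma} = (\Le)\vec{1}\zeta^{T} - 2(\Le)L_{\epsilon}^{\dagger} = -2I + \tfrac{2}{n}\vec{1}\vec{1}^{T}$. Now apply this to $\tau_{\epsilon}$: using $\Omega_{\epsilon}^{\sigma}\tau_{\epsilon}=n\vec{1}$ and $(\Le)\vec{1}=0$, the left side gives $(\Le)(n\vec{1}) = 0$, while the right side gives $-2\tau_{\epsilon} + \tfrac{2}{n}(\vec{1}^{T}\tau_{\epsilon})\vec{1}$. Hence $\tau_{\epsilon} = \tfrac{1}{n}(\vec{1}^{T}\tau_{\epsilon})\vec{1} - \tfrac{1}{2}\,(\text{something in the kernel direction})$ — more carefully, this tells us $\Le$ maps $\tau_{\epsilon}$ proportionally, which is not yet the formula. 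The cleaner route is instead to compute $\Omega_{\epsilon}^{\sigma}$ applied to the candidate vector. Let $\psi(i):=1-\tfrac{1}{2}\sum_{j\sim i}(w^{+}_{ij}-\epsilon w^{-}_{ij})\Omega_{\epsilon}^{\sigma}(i,j)$; note $\psi = \vec{1} - \tfrac{1}{2}\,\mathrm{diag}\big((\Le)\Omega_{\epsilon}^{\sigma}\big)$ is, by the expression for $(\Le)\Omega_{\epsilon}^{\sigma}$ above, exactly $\vec{1}-\tfrac{1}{2}\big(-2\vec{1}+\tfrac{2}{n}\vec{1}\big) = \vec{1}+\vec{1}-\tfrac{1}{n}\vec{1}$ — so in fact $\psi$ should come out a constant multiple of $\vec{1}$, which would make the proposition trivially consistent. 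I would double-check the diagonal bookkeeping here, since the entries $\vec{e_i}^{T}(\Le)\Omega_{\epsilon}^{\sigma}\vec{e_i} = (\Le\Omega_{\epsilon}^{\sigma})_{ii}$ must be matched against $\sum_{j\sim i}(w^{+}_{ij}-\epsilon w^{-}_{ij})\Omega_{\epsilon}^{\sigma}(i,j)$ using that the $i$-th row of $\Le$ has off-diagonal entries $-(w^{+}_{ij}-\epsilon w^{-}_{ij})$ and the diagonal $d_i^{+}-\epsilon d_i^{-}$, together with $\Omega_{\epsilon}^{\sigma}(i,i)=0$.

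The correct derivation is: $\Omega_{\epsilon}^{\sigma}\psi = \Omega_{\epsilon}^{\sigma}\vec{1} - \tfrac{1}{2}\Omega_{\epsilon}^{\sigma}\,\mathrm{diag}((\Le)\Omega_{\epsilon}^{\sigma})$, but this is awkward; the efficient path is to verify directly that $\Omega_{\epsilon}^{\sigma}\big(\phi_{\epsilon}\psi\big)=n\vec{1}$ and invoke uniqueness. Since $\Omega_{\epsilon}^{\sigma}\tau_{\epsilon}=n\vec 1$ has a unique solution ($\Omega_{\epsilon}^{\sigma}$ invertible), it suffices to check $\Omega_{\epsilon}^{\sigma}\psi = c\,\vec{1}$ for some scalar $c$ and then fix $\phi_{\epsilon}=n/c$; pairing $\Omega_{\epsilon}^{\sigma}\psi=c\vec1$ with $\vec{1}^{T}$ on the left gives $c = \tfrac{1}{n}\vec{1}^{T}\Omega_{\epsilon}^{\sigma}\psi$, but it is faster to note $\psi = (\Le)^{\dagger}(\Le)\,\tau_\epsilon$-type manipulations; ultimately $\phi_{\epsilon}=n\,\vec{1}^{T}(\Omega_{\epsilon}^{\sigma})^{-1}\vec{1}$ falls out because $\tau_{\epsilon}=n(\Omega_{\epsilon}^{\sigma})^{-1}\vec{1}$ forces $\vec{1}^{T}\tau_{\epsilon}=n\,\vec{1}^{T}(\Omega_{\epsilon}^{\sigma})^{-1}\vec{1}$, and one shows $\tau_\epsilon/(\vec1^T\tau_\epsilon)=\psi/\vec1^T\psi$ with $\vec1^T\psi=1$ after the constant computation. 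The main obstacle I anticipate is purely the index-level bookkeeping in identifying $\mathrm{diag}\big((\Le)\Omega_{\epsilon}^{\sigma}\big)_i$ with $\sum_{j\sim i}(w^{+}_{ij}-\epsilon w^{-}_{ij})\Omega_{\epsilon}^{\sigma}(i,j)$ and then pinning down the scalar $\phi_{\epsilon}$ correctly; the structural step (inverting $\Omega_{\epsilon}^{\sigma}$ and using $(\Le)\vec{1}=0$, $(\Le)L_{\epsilon}^{\dagger}=I-\tfrac1n\vec1\vec1^{T}$) is routine.
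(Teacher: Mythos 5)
There is a genuine gap, and it sits exactly at the step you yourself flagged as suspicious. When you left-multiply $\Omega_{\epsilon}^{\sigma}=\zeta\vec{1}^T+\vec{1}\zeta^T-2L_{\epsilon}^{\dagger}$ by $\Le$, the term that dies is $\vec{1}\zeta^T$ (because $(\Le)\vec{1}=0$), \emph{not} $\zeta\vec{1}^T$; the latter survives as $\bigl((\Le)\zeta\bigr)\vec{1}^T$. The correct identity is
\[ (\Le)\Omega_{\epsilon}^{\sigma}=\bigl((\Le)\zeta\bigr)\vec{1}^T-2I+\tfrac{2}{n}\vec{1}\vec{1}^T, \]
whose rank-one part is $2\vec{r}\vec{1}^T$ with $\vec{r}=\tfrac12(\Le)\zeta+\tfrac1n\vec{1}$ --- this is precisely Equation (\ref{equ4}) of the paper. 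Dropping $\bigl((\Le)\zeta\bigr)\vec{1}^T$ is what led you to the false conclusion that $\psi(i)=1-\tfrac12\sum_{j\sim i}(w^{+}_{ij}-\epsilon w^{-}_{ij})\Omega_{\epsilon}^{\sigma}(i,j)$ is a constant multiple of $\vec{1}$; it is not (it equals $\vec{r}$, which is generally non-constant), and the proposition would be vacuous if it were. Moreover, the one fact that actually carries the proof --- that $\Omega_{\epsilon}^{\sigma}\psi$ is proportional to $\vec{1}$ --- is asserted but never established in your write-up; the final paragraph circles around it ("it suffices to check $\Omega_{\epsilon}^{\sigma}\psi=c\vec{1}$", "ultimately $\phi_{\epsilon}$ falls out") without a computation.

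Your overall strategy is salvageable and is in fact a legitimate alternative to the paper's route: the paper reads both $\Omega_{\epsilon}^{\sigma}\vec{r}=2R^2\vec{1}$ and the diagonal identity for $\vec{r}$ directly off the block-inverse identity of Theorem \ref{eq2}, whereas you could instead verify $\Omega_{\epsilon}^{\sigma}\psi=c\vec{1}$ by hand from the decomposition of $\Omega_{\epsilon}^{\sigma}$: writing $\psi=\tfrac12(\Le)\zeta+\tfrac1n\vec{1}$, the products $\zeta\vec{1}^T\psi$ and $-2L_{\epsilon}^{\dagger}\psi$ contribute $\zeta$ and $-\zeta+\tfrac{\vec{1}^T\zeta}{n}\vec{1}$ respectively (using $L_{\epsilon}^{\dagger}(\Le)=I-\tfrac1n\vec{1}\vec{1}^T$ and $L_{\epsilon}^{\dagger}\vec{1}=0$), so the non-constant parts cancel and $\Omega_{\epsilon}^{\sigma}\psi=c\vec{1}$ with $c=\tfrac12\zeta^T(\Le)\zeta+\tfrac{2}{n}\vec{1}^T\zeta$. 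Combined with $\vec{1}^T\psi=1$ (immediate from $\vec{1}^T(\Le)=0$, or from Lemma \ref{lemma2}) and the invertibility of $\Omega_{\epsilon}^{\sigma}$, this gives $c=1/\vec{1}^T(\Omega_{\epsilon}^{\sigma})^{-1}\vec{1}$ and hence $\tau_{\epsilon}=\tfrac{n}{c}\psi=\phi_{\epsilon}\psi$. As written, however, the proposal both contains a computational error and omits the decisive verification, so it does not constitute a proof.
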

	 
	 \begin{proof}
	 	From the matrix identity in Thm \ref{eq2}, we know
	 	\begin{align}
	 		\Omega_{\epsilon}^{\sigma}\vec{r}&=2R^2\vec{1} \label{equ3}\\
	 		-2\vec{r}\vec{1}^T+(\Le)\Omega_{\epsilon}^{\sigma}&=-2I \label{equ4} \\
	 		\vec{r}^T\vec{1}&=1
	 	\end{align}
	 	Equation (\ref{equ3}) implies 
	 	\[ \tau_{\epsilon}=\frac{n}{2R^2}\vec{r} \]
	 	and Equation (\ref{equ4}) implies
	 	\[ \vec{r}(i)=1-\frac{1}{2}\sum_{j\sim i}(w^{+}_{ij}-\epsilon w^{-}_{ij})\Omega_{\epsilon}^{\sigma}(i,j) \]
	 	By left multiplying the inverse of $\Omega_{\epsilon}^{\sigma} $ in two sides of Equation (\ref{equ3}) and then left multiplying $\vec{1}^T$, we have
	 	\[ 2R^2=\frac{1}{\vec{1}^T(\Omega_{\epsilon}^{\sigma})^{-1}\vec{1}} \]
	 	To sum up, we have
	 	\begin{align*}
	 		\tau_{\epsilon}(i)=\frac{n}{2R^2}\vec{r}(i)
	 		=n\vec{1}^T(\Omega_{\epsilon}^{\sigma})^{-1}\vec{1}\left(1-\frac{1}{2}\sum_{j\sim i}(w^{+}_{ij}-\epsilon w^{-}_{ij})\Omega_{\epsilon}^{\sigma}(i,j)\right)
	 	\end{align*}
	 \end{proof}
	 
	 Combining Lemma \ref{lemma2} and Proposition \ref{pro3}, we have the following corollary.
	 \begin{corollary}\label{coro3}
	 \[ \sum_{i\in V}\tau_{\epsilon}(i)=\phi_{\epsilon} \]	
	 \end{corollary}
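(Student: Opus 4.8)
The plan is to simply add up the explicit formula for $\tau_{\epsilon}(i)$ from Proposition \ref{pro3} over all vertices $i \in V$ and recognize the resulting sum via Lemma \ref{lemma2}. Since Proposition \ref{pro3} gives
\[
\tau_{\epsilon}(i) = \phi_{\epsilon}\left(1 - \frac{1}{2}\sum_{j\sim i}(w^{+}_{ij}-\epsilon w^{-}_{ij})\Omega_{\epsilon}^{\sigma}(i,j)\right),
\]
summing over $i \in V$ and pulling the constant $\phi_{\epsilon}$ out front yields
\[
\sum_{i\in V}\tau_{\epsilon}(i) = \phi_{\epsilon}\left(|V(G)| - \frac{1}{2}\sum_{i\in V}\sum_{j\sim i}(w^{+}_{ij}-\epsilon w^{-}_{ij})\Omega_{\epsilon}^{\sigma}(i,j)\right).
\]

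Next I would note that because $w^{+}$ and $w^{-}$ are supported on edges, the condition $j\sim i$ in the inner sum is immaterial: the double sum over adjacent pairs equals the double sum $\sum_{i,j\in V}(w^{+}_{ij}-\epsilon w^{-}_{ij})\Omega_{\epsilon}^{\sigma}(i,j)$ over all ordered pairs. By Lemma \ref{lemma2} this double sum is exactly $2(|V(G)|-1)$. Substituting, the factor in parentheses becomes $|V(G)| - (|V(G)|-1) = 1$, so $\sum_{i\in V}\tau_{\epsilon}(i) = \phi_{\epsilon}$, which is the claim.

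There is essentially no obstacle here; the only thing to be careful about is bookkeeping with the vertex count. Proposition \ref{pro3} and Lemma \ref{lemma2} are both stated with $|V(G)|$ (the curvature section uses $n = |V(G)|$, while the simplex section used $n+1$), so one must make sure the same normalization of $n$ is used consistently when quoting the two results. Once that is checked, the computation above closes immediately.
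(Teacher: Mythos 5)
Your proof is correct and is exactly the argument the paper intends: the corollary is stated as an immediate consequence of combining Proposition \ref{pro3} with Lemma \ref{lemma2}, and your summation with the cancellation $|V(G)| - (|V(G)|-1) = 1$ is that combination carried out explicitly. Your remark about keeping the normalization of $n$ consistent between the simplex section and the curvature section is a fair caution, but does not change the argument.
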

	 
	 An automorphism $f$ on a signed graph $(G,\sigma,w)$ is an automorphism on its underlying graph $G$ and satisfies $\sigma_{f(x)f(y)}=\sigma_{xy}$ for any edge $(x,y)\in E(G)$. A signed graph $(G,\sigma,w)$ is called vertex transitive if any vertex $x,y$, there exists an automorphism $f$ such that $f(x)=y$.
	 
	 \begin{proposition}
	 	If Suppose $(G,\sigma,w)$ is a positive-connected and vertex transitive signed graph, the node $\epsilon-$repelling curvature is constant.
	 \end{proposition}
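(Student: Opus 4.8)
The plan is to work directly with the automorphism group: every signed-graph automorphism induces an orthogonal permutation matrix under which $(\Le)$, its pseudoinverse $L_{\epsilon}^{\dagger}$, and the $\epsilon$-repelling matrix $\Om$ are all invariant; uniqueness of the solution of $\Om\tau_{\epsilon}=n\vec{1}$ then forces $\tau_{\epsilon}$ to be invariant under every such permutation, and vertex transitivity upgrades this to $\tau_{\epsilon}$ being constant.

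First I would fix an automorphism $f$ of $(G,\sigma,w)$ and let $P$ be the permutation matrix with $P\vec{e_j}=\vec{e}_{f(j)}$, which is orthogonal ($P^{T}=P^{-1}$, corresponding to $f^{-1}$). Since $f$ preserves the edge set, the weight function $w$ and the signature $\sigma$, it restricts to weight-preserving bijections of $E_{+}$ and of $E_{-}$ and satisfies $d^{+}_{i}=d^{+}_{f(i)}$, $d^{-}_{i}=d^{-}_{f(i)}$. Reading off matrix entries, $(P^{T}L_{+}P)(i,j)=L_{+}(f(i),f(j))=L_{+}(i,j)$, and likewise for $L_{-}$, so $P^{T}(\Le)P=\Le$, i.e. $(\Le)P=P(\Le)$.

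Next, because $P$ is orthogonal, conjugation by $P$ commutes with the Moore--Penrose pseudoinverse (one checks the four defining identities using $P^{T}=P^{-1}$), so $P^{T}L_{\epsilon}^{\dagger}P=\bigl(P^{T}(\Le)P\bigr)^{\dagger}=L_{\epsilon}^{\dagger}$, hence $L_{\epsilon}^{\dagger}P=PL_{\epsilon}^{\dagger}$. Feeding this into $\Om(i,j)=(\vec{e_i}-\vec{e_j})^{T}L_{\epsilon}^{\dagger}(\vec{e_i}-\vec{e_j})$ and using $P\vec{e_i}=\vec{e}_{f(i)}$ gives $\Om(f(i),f(j))=\Om(i,j)$ for all $i,j\in V$, equivalently $\Om P=P\Om$.

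Finally, $\Om$ is invertible by the earlier lemma, so $\tau_{\epsilon}$ is the unique solution of $\Om\tau_{\epsilon}=n\vec{1}$; since $P\vec{1}=\vec{1}$, applying $P$ to this equation gives $\Om(P\tau_{\epsilon})=n\vec{1}$, whence $P\tau_{\epsilon}=\tau_{\epsilon}$ by uniqueness, that is, $\tau_{\epsilon}$ is constant on the orbits of the automorphism group. Vertex transitivity provides, for each pair $x,y\in V$, an automorphism $f$ with $f(x)=y$, so $\tau_{\epsilon}(x)=\tau_{\epsilon}(y)$ and $\tau_{\epsilon}$ is constant, with common value $\phi_{\epsilon}/n$ by Corollary \ref{coro3}. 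The only step that is more than bookkeeping with the permutation matrix is the commutation of $L_{\epsilon}^{\dagger}$ with the orthogonal conjugation; it can also be seen spectrally, since $P$ commutes with the symmetric matrix $\Le$ and fixes its kernel $\langle\vec{1}\rangle$, hence preserves every eigenspace, on each of which $L_{\epsilon}^{\dagger}$ acts as a scalar.
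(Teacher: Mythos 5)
Your proposal is correct and follows essentially the same route as the paper: conjugation invariance of $\Le$ under the permutation matrix of an automorphism, passage to the pseudoinverse, invariance of $\Omega_{\epsilon}^{\sigma}$, and then uniqueness of the solution of $\Omega_{\epsilon}^{\sigma}\tau_{\epsilon}=n\vec{1}$ combined with vertex transitivity. The only cosmetic difference is that you read off $\Omega_{\epsilon}^{\sigma}(f(i),f(j))=\Omega_{\epsilon}^{\sigma}(i,j)$ directly from the quadratic-form definition, whereas the paper first checks that the diagonal of $L_{\epsilon}^{\dagger}$ is constant and uses the decomposition $\Omega_{\epsilon}^{\sigma}=\vec{1}\zeta^T+\zeta\vec{1}^T-2L_{\epsilon}^{\dagger}$.
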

	 
	 \begin{proof}
	 	For any $i,j \in V$, let $f$ be the automorphism such that $f(i)=j$ and $P$ is the permutation matrix of $f$. Then we have 
	 	\begin{align*}
	 		P^T(\Le)P&=\Le\\
	 		\left(P^T(\Le)P\right)^{\dagger}&=(\Le)^{\dagger}\\
	 		P^T(\Le)^{\dagger}P&=(\Le)^{\dagger}
	 	\end{align*}
	 	Therefore, the diagonal entries of $(\Le)^{\dagger} $ are same. Indeed,
	 	\begin{align*}
	 		(\Le)^{\dagger}_{ii}&=\vec{e_i}^T(\Le)^{\dagger}\vec{e_i}\\
	 		&=\vec{e_i}^TP^T(\Le)^{\dagger}P\vec{e_i}\\
	 		&=\vec{e_j}^T(\Le)^{\dagger}\vec{e_j}\\
	 		&=(\Le)^{\dagger}_{jj}
	 	\end{align*}
	 Hence $P\zeta=\zeta$ where $\zeta$ be the vector consisting of the diagonal entries of $(\Le)^{\dagger}$.
	 As a result,
	 	\begin{align*}
	 		P^T\Omega_{\epsilon}^{\sigma}P=P^T\left(\vec{1}\zeta^T+\zeta\vec{1}^T-2(\Le)^{\dagger}\right)P&=\vec{1}\zeta^T+\zeta\vec{1}^T-2(\Le)^{\dagger}=\Omega_{\epsilon}^{\sigma}\\
	 		P^T\left(\Omega_{\epsilon}^{\sigma}\right)^{-1}P&=\left(\Omega_{\epsilon}^{\sigma}\right)^{-1}\\
	 		P\tau_{\epsilon}=nP\left(\Omega_{\epsilon}^{\sigma}\right)^{-1}P^TP\vec{1}&=n\left(\Omega_{\epsilon}^{\sigma}\right)^{-1}\vec{1}=\tau_{\epsilon}
	 	\end{align*}
	 	Thus, we know $\tau_{\epsilon}(i)=\tau_{\epsilon}(j)$ for any $i,j\in V$ and $\tau_{\epsilon}$ is constant.
	 \end{proof}
	 
	\begin{definition}
		We denote by $W_{\epsilon}(G^{\sigma})$ the sum of $\epsilon-$repelling cost of all pairs of vertices on positive connected signed graph $(G,\sigma,w)$:
		\[ W_{\epsilon}(G^{\sigma})=\sum_{i<j}\Omega_{\epsilon}^{\sigma}(i,j) \]
		called as $\epsilon-$repelling graph resistance.
	\end{definition}
	 
	 \begin{lemma}
	 	\[ W_{\epsilon}(G^{\sigma})=|V(G)|\sum_{k=2}^{|V(G)|}\frac{1}{\lambda_k^{(\epsilon)}}\]
	 	where $\left\{\lambda_k^{(\epsilon)}\right\}$ are the nonzero and non-decreasing eigenvalues of $\Le$.
	 \end{lemma}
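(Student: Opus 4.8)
The plan is the standard ``Kirchhoff index'' computation, adapted to the $\epsilon$-repelling setting: reduce the sum of pairwise $\epsilon$-repelling costs to the trace of the pseudoinverse $L_{\epsilon}^{\dagger}$, then diagonalize. Set $N:=|V(G)|$. Starting from the definition $\Om(i,j)=(\vec{e_i}-\vec{e_j})^T L_{\epsilon}^{\dagger}(\vec{e_i}-\vec{e_j})=L_{\epsilon}^{\dagger}(i,i)+L_{\epsilon}^{\dagger}(j,j)-2L_{\epsilon}^{\dagger}(i,j)$ and using symmetry together with $\Om(i,i)=0$, I would first write
\[ W_{\epsilon}(G^{\sigma})=\sum_{i<j}\Om(i,j)=\frac{1}{2}\sum_{i,j\in V}\Om(i,j)=\frac{1}{2}\sum_{i,j\in V}\bigl(L_{\epsilon}^{\dagger}(i,i)+L_{\epsilon}^{\dagger}(j,j)-2L_{\epsilon}^{\dagger}(i,j)\bigr). \]

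Next I would expand the three groups of terms. Each of the first two contributes $N\cdot\operatorname{tr}(L_{\epsilon}^{\dagger})$, since summing $L_{\epsilon}^{\dagger}(i,i)$ over the free index produces a factor $N$. The cross term is $\sum_{i,j\in V}L_{\epsilon}^{\dagger}(i,j)=\vec{1}^{T}L_{\epsilon}^{\dagger}\vec{1}$, and this vanishes: because $(\Le)\vec{1}=0$ and, by the consensus-index lemma, $\Le$ has rank $N-1$ with null space spanned by $\vec{1}$, the pseudoinverse $L_{\epsilon}^{\dagger}$ has the same null space, so $L_{\epsilon}^{\dagger}\vec{1}=0$. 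Hence
\[ W_{\epsilon}(G^{\sigma})=\frac{1}{2}\bigl(2N\operatorname{tr}(L_{\epsilon}^{\dagger})-2\vec{1}^{T}L_{\epsilon}^{\dagger}\vec{1}\bigr)=N\operatorname{tr}(L_{\epsilon}^{\dagger}). \]

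Finally I would evaluate the trace spectrally. Choose an orthonormal eigenbasis of the symmetric matrix $\Le$; its eigenvalues are $0$ (simple, with eigenvector $\vec{1}/\sqrt{N}$) together with the nonzero, non-decreasing eigenvalues $\lambda_2^{(\epsilon)}\le\cdots\le\lambda_{N}^{(\epsilon)}$. On the complementary invariant subspace the pseudoinverse acts with eigenvalues $1/\lambda_k^{(\epsilon)}$, and it kills $\vec{1}$, so $\operatorname{tr}(L_{\epsilon}^{\dagger})=\sum_{k=2}^{N}1/\lambda_k^{(\epsilon)}$. Substituting gives $W_{\epsilon}(G^{\sigma})=N\sum_{k=2}^{N}1/\lambda_k^{(\epsilon)}$, which is the asserted identity with $N=|V(G)|$. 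There is no real obstacle here; the only point needing care is the vanishing of $\vec{1}^{T}L_{\epsilon}^{\dagger}\vec{1}$, which is exactly where the rank-$(N-1)$ property of the $\epsilon$-repelling Laplacian (valid for $\epsilon<\epsilon_0$) is used, and the bookkeeping that the index in the trace runs from $2$ to $N$.
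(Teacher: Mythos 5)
Your proposal is correct and follows essentially the same route as the paper: expand $\sum_{i<j}\Om(i,j)$ into $|V(G)|\operatorname{tr}(L_{\epsilon}^{\dagger})-\vec{1}^{T}L_{\epsilon}^{\dagger}\vec{1}$, kill the cross term via $L_{\epsilon}^{\dagger}\vec{1}=0$, and read off the trace from the nonzero eigenvalues. Your write-up is in fact slightly more explicit than the paper's, which stops at $|V(G)|\operatorname{tr}(L_{\epsilon}^{\dagger})$ and leaves the spectral step implicit.
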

	 \begin{proof}
	 	\begin{align*}
	 		W_{\epsilon}(G^{\sigma})&=\frac{1}{2}\sum_{i,j}\left(\vec{e_i}-\vec{e_j}\right)^T(\Le)^{\dagger}\left(\vec{e_i}-\vec{e_j}\right)\\
	 		&=|V(G)|\sum_{i\in V}\vec{e_i}^T(\Le)^{\dagger}\vec{e_i}- \vec{1}^T(\Le)^{\dagger}\vec{1}\\
	 		&=|V(G)|\text{tr}\left((\Le)^{\dagger}\right)
	 	\end{align*}
	 \end{proof}
	 
	 \begin{corollary}\label{coro2}
	 	\[ \frac{|V(G)|}{\lambda_2^{(\epsilon)}}<W_{\epsilon}(G^{\sigma})\le \frac{|V(G)|\left(|V(G)|-1\right)}{\lambda_2^{(\epsilon)}} \]
	 \end{corollary}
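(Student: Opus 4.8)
The plan is to read both inequalities directly off the spectral expansion of $W_{\epsilon}(G^{\sigma})$ established in the preceding lemma, namely
\[ W_{\epsilon}(G^{\sigma})=|V(G)|\sum_{k=2}^{|V(G)|}\frac{1}{\lambda_k^{(\epsilon)}}, \]
where $\lambda_2^{(\epsilon)}\le\lambda_3^{(\epsilon)}\le\cdots\le\lambda_{|V(G)|}^{(\epsilon)}$ are precisely the nonzero eigenvalues of $\Le$. There are exactly $|V(G)|-1$ of them, because $\Le$ is positive semidefinite of rank $|V(G)|-1$ by Lemma \ref{lem1}, and every one of them is strictly positive since $\epsilon<\epsilon_0$; in particular $\lambda_2^{(\epsilon)}$ is the smallest nonzero eigenvalue.

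For the upper bound I would use $\lambda_2^{(\epsilon)}\le\lambda_k^{(\epsilon)}$, equivalently $1/\lambda_k^{(\epsilon)}\le 1/\lambda_2^{(\epsilon)}$, for each $k$ with $2\le k\le|V(G)|$. Summing these $|V(G)|-1$ inequalities gives $\sum_{k=2}^{|V(G)|}1/\lambda_k^{(\epsilon)}\le(|V(G)|-1)/\lambda_2^{(\epsilon)}$, and multiplying by $|V(G)|$ yields the right-hand bound. For the lower bound I would retain only the first term of the sum, $\sum_{k=2}^{|V(G)|}1/\lambda_k^{(\epsilon)}\ge 1/\lambda_2^{(\epsilon)}$, the inequality being strict because the discarded terms $1/\lambda_k^{(\epsilon)}$ with $k\ge 3$ are positive and at least one such term is present; multiplying by $|V(G)|$ gives $W_{\epsilon}(G^{\sigma})>|V(G)|/\lambda_2^{(\epsilon)}$.

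There is essentially no obstacle here: the corollary follows at once from the eigenvalue formula together with monotonicity of $t\mapsto 1/t$ on $(0,\infty)$. The only point meriting a word of care is the strictness of the left-hand inequality, which relies on the spectrum of $\Le$ containing at least two distinct nonzero eigenvalues, i.e.\ on $|V(G)|\ge 3$; this is harmless in the intended setting, since $|V(G)|=2$ forces the signed graph to be a single positive edge, in which case the left inequality simply becomes an equality.
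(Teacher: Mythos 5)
Your proposal is correct and matches the paper's (implicit) argument exactly: the corollary is read off from the spectral formula $W_{\epsilon}(G^{\sigma})=|V(G)|\sum_{k=2}^{|V(G)|}1/\lambda_k^{(\epsilon)}$ by bounding each reciprocal above by $1/\lambda_2^{(\epsilon)}$ and below by retaining the $k=2$ term. Your observation that strictness of the left inequality requires $|V(G)|\ge 3$ is a valid caveat the paper does not mention.
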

	 
	  Then we prove Theorem \ref{main4}  which shows a lower bound for $ \lambda_2^{(\epsilon)}$ on signed graphs with positive node signed curvature.
	  
	 \begin{proof}[Proof of Theorem \ref{main4}]
	 	\begin{align*}
	 		W_{\epsilon}(G^{\sigma})&=\frac{1}{2}\sum_{i,j}\Omega_{\epsilon}^{\sigma}(i,j)\\
	 		&\le \frac{1}{2}\sum_{i,j}\Omega_{\epsilon}^{\sigma}(i,j)\frac{\tau(j)}{K_{\epsilon}}\\
	 		&=\frac{|V(G)|^2}{2K_{\epsilon}}
	 	\end{align*}
	 	Combine the left inequality of corollary \ref{coro2} with the above inequality, we finish the proof.
	 \end{proof}
	 
	 \begin{proposition}
	 Suppose $\tau_{\epsilon}$ on positive connected signed graph $\Gamma$ is non-negative. Let $X_{\epsilon}, N_{\epsilon}$ be the maximal and minimal $\epsilon-$repelling cost among all pairs of signed resistance respectively, then
	 \[ X_{\epsilon}\le \frac{2|V(G)|}{\phi_{\epsilon}}, N_{\epsilon}\le \frac{|V(G)|}{\phi_{\epsilon}} \]
	 \end{proposition}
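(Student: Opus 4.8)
The plan is to read everything off from the matrix identity of Theorem~\ref{eq2} and the simplex picture already set up. Write $N := |V(G)|$. Recall from the proof of Proposition~\ref{pro3} that the barycentric coordinate vector $\vec r$ of the circumcenter of the simplex $\mathcal{S}_{\epsilon}$ satisfies $\vec r = \tfrac{1}{\phi_{\epsilon}}\tau_{\epsilon}$, that $\Omega_{\epsilon}^{\sigma}\vec r = 2R^2\vec 1$ with $R$ the circumradius, and that $2R^2 = N/\phi_{\epsilon}$ (the latter from $\vec r^{\,T}\vec 1 = 1$ together with $\tau_{\epsilon} = \tfrac{N}{2R^2}\vec r$, both established there). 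The hypothesis $\tau_{\epsilon}\ge 0$, combined with Corollary~\ref{coro3}, says precisely that $\vec r$ is a probability vector, so the circumcenter $c = \sum_j r_j v_j$ of $\mathcal{S}_{\epsilon}$ is an honest convex combination of the simplex vertices $v_1,\dots,v_N$; in particular $c$ lies in $\mathcal{S}_{\epsilon}$ and $\|v_i - c\| = R$ for every $i$. Also recall $\Omega_{\epsilon}^{\sigma}(i,j) = \|v_i - v_j\|^2$ and $\Omega_{\epsilon}^{\sigma}(i,i)=0$.

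For the bound on $X_{\epsilon}$ I would argue purely metrically: since all vertices $v_i$ sit on the sphere of radius $R$ about $c$, the triangle inequality in $\mathbb{R}^{N-1}$ gives $\sqrt{\Omega_{\epsilon}^{\sigma}(i,j)} = \|v_i - v_j\| \le \|v_i - c\| + \|c - v_j\| = 2R$ for every pair $(i,j)$, hence $\Omega_{\epsilon}^{\sigma}(i,j) \le 4R^2 = 2N/\phi_{\epsilon}$, and taking the maximum over pairs yields $X_{\epsilon}\le 2N/\phi_{\epsilon}$. (One may equally invoke the distance property of $\sqrt{\Omega_{\epsilon}^{\sigma}}$ proved earlier in place of the Euclidean picture.)

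For the bound on $N_{\epsilon}$ the idea is the "dual" statement: because $c$ is an interior convex combination of the vertices, some pair of vertices must be close. Concretely, evaluating $\Omega_{\epsilon}^{\sigma}\vec r = 2R^2\vec 1$ in coordinate $i$ and using $\Omega_{\epsilon}^{\sigma}(i,i)=0$, $r_j\ge 0$, gives $\sum_{j\ne i} r_j\,\Omega_{\epsilon}^{\sigma}(i,j) = 2R^2$, so $2R^2$ is a weighted average of $\{\Omega_{\epsilon}^{\sigma}(i,j)\}_{j\ne i}$ and at least one of these is $\le 2R^2/(1-r_i)$; summing over $i$ and regrouping by $j$ converts this into a single averaging inequality for $\min_{i\ne j}\Omega_{\epsilon}^{\sigma}(i,j)$. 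I expect this last step to be the main obstacle: the crude convexity estimate only produces a constant slightly above $1$ (of the shape $N^2/((N-1)\phi_{\epsilon})$), and pinning it down to the clean value $N/\phi_{\epsilon} = 2R^2$ requires using that $c$ lies genuinely \emph{inside} the simplex more sharply — e.g. that the circumcenter being a convex combination forces two of the spherical points $v_i$ to subtend a non-obtuse angle at $c$, equivalently forces a pair with $\langle v_i - c,\ v_j - c\rangle \ge 0$ and hence $\Omega_{\epsilon}^{\sigma}(i,j) = 2R^2 - 2\langle v_i - c, v_j - c\rangle \le 2R^2$. Establishing that geometric fact (or an appropriate substitute adapted to the altitudes/facets of $\mathcal{S}_{\epsilon}$ via \cite[Thm2.1.1]{fiedler2011matrices}) is where the real work lies; the rest is bookkeeping with the identities above.
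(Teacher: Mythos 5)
Your treatment of $X_{\epsilon}$ is correct, and it takes a genuinely different route from the paper. The paper applies the minimax theorem of \cite[Thm 4]{Steinerberger2017} to the measure $\mu=\frac{1}{2}\left(1_{\{x=i\}}+1_{\{x=j\}}\right)$ and then invokes the inequality $\Omega_{\epsilon}^{\sigma}(i,j)\le\Omega_{\epsilon}^{\sigma}(i,k)+\Omega_{\epsilon}^{\sigma}(k,j)$ at the maximal pair, a triangle-type inequality that it simply assumes; since only $\sqrt{\Omega_{\epsilon}^{\sigma}}$ is known to be a metric, that step is delicate. Your circumsphere argument, $\|v_i-v_j\|\le\|v_i-c\|+\|c-v_j\|=2R$ and hence $\Omega_{\epsilon}^{\sigma}(i,j)\le 4R^2=2|V(G)|/\phi_{\epsilon}$, avoids this entirely, and in fact does not even use the hypothesis $\tau_{\epsilon}\ge 0$, since every vertex lies on the circumsphere whether or not the circumcenter lies inside the simplex. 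This half of your proof is cleaner than the paper's.

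The $N_{\epsilon}$ half, however, is not merely ``where the real work lies'': the geometric fact you hope to establish is false, and so is the stated inequality $N_{\epsilon}\le |V(G)|/\phi_{\epsilon}$ if $N_{\epsilon}$ denotes the minimum over \emph{distinct} pairs (the only non-trivial reading). Write $N=|V(G)|$. For the regular simplex the circumcenter is the centroid, $r_i=1/N$, and $\langle v_i-c,v_j-c\rangle=-R^2/(N-1)<0$ for every $i\ne j$, so no pair subtends a non-obtuse angle at $c$ and every off-diagonal cost equals $2R^2\bigl(1+\tfrac{1}{N-1}\bigr)>2R^2$. This simplex is realized by the all-positive unit-weight complete graph $K_N$: there $\Omega_{\epsilon}^{\sigma}(i,j)=2/N$ for $i\ne j$, $\tau_{\epsilon}\equiv N^2/(2(N-1))>0$, $\phi_{\epsilon}=N^3/(2(N-1))$, and $N/\phi_{\epsilon}=2(N-1)/N^2<2/N=N_{\epsilon}$. (The paper's own argument for this half, applying the minimax theorem to $\gamma=1_{\{x=l\}}$, only controls $\min_{v\in V}\Omega_{\epsilon}^{\sigma}(v,l)$, which vanishes at $v=l$, so it never reaches the off-diagonal minimum.) What your averaging step actually proves is the sharp substitute: from $\sum_{j\ne i}r_j\Omega_{\epsilon}^{\sigma}(i,j)=2R^2$ with total weight $1-r_i$, choosing $i$ with $r_i\le 1/N$ gives $N_{\epsilon}\le 2R^2 N/(N-1)=N^2/((N-1)\phi_{\epsilon})$, with equality on $K_N$. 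You should record that weaker bound and flag the second inequality of the proposition as stated as incorrect.
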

	 
	 \begin{proof}
	 	Assume $X_{\epsilon}=\Omega_{\epsilon}^{\sigma}(i,j)$ and $ X_{\epsilon}\le \Omega_{\epsilon}^{\sigma}(i,k)+\Omega_{\epsilon}^{\sigma}(k,j)$ for any $k\in V$. Suppose 
	 	$\mu=\frac{1}{2}\left(1_{\left\{x=i\right\}}+1_{\left\{x=j\right\}}\right)$ is a probability measure assigning equal and nonzero mass to the two points $i,j$ and zero elsewhere and $h$ is the row index such that $\Omega_{\epsilon}^{\sigma}(i,k)+\Omega_{\epsilon}^{\sigma}(k,j)$ is minimal, using the Minimax theorem in \cite[Thm 4]{Steinerberger2017} and corollary \ref{coro3}, we know
	 	\[ \frac{1}{2}X_{\epsilon}\le \frac{1}{2}\left(\Omega_{\epsilon}^{\sigma}(i,h)+\Omega_{\epsilon}^{\sigma}(h,j)\right)\le \frac{|V(G)|}{\phi_{\epsilon}} \]
	 	Therefore, we finish the proof for the upper bound of $X_{\epsilon}$. Suppose $N_{\epsilon}=\Omega_{\epsilon}^{\sigma}(k,l)$, let $\gamma=1_{\left\{x=l\right\}}$ be the probability measure only supported at vertex $l$. Similarly we have the upper bound for $N_{\epsilon}$.	 	
	 \end{proof}
	 
	 \begin{definition}
	 	Define $\vartheta_{\epsilon}$ as the edge $\epsilon-$repelling curvature on $(G,\sigma,w)$ as follows:
	 	\[ \vartheta_{\epsilon}(i,j):=\frac{2(\tau_{\epsilon}(i)+\tau_{\epsilon}(j))}{\Omega_{\epsilon}^{\sigma}}+(1+\epsilon)\Lambda_{\epsilon}(i,j)\]
	 	where \[\Lambda_{\epsilon}(i,j)=d_i^{-}\left(1-\frac{\sum_{k\in V}\Om(j,k)p^{-}_{ik}}{\Om(i,j)}\right)+d_j^{-}\left(1-\frac{\sum_{k\in V}\Om(i,k)p^{-}_{jk}}{\Om(i,j)}\right) \]
	 	and $p^{-}_{ik}=\frac{w_{ik}^{-}}{d_i^{-}}, p^{-}_{jk}=\frac{w_{jk}^{-}}{d_j^{-}}$ are the probability of random walk on negative edges adjacent to $i,j$.
	 \end{definition}
	 
	 \begin{remark}
	 	 For a pair of adjacent vertices $i\sim j$, the term $\sum_{k\in V}\Om(j,k)p^{-}_{ik} $ can be regarded as signed resistance between $j$ and the "effective negative neighbor" of $i$ and the ratio difference $1-\frac{\sum_{k\in V}\Om(j,k)p^{-}_{ik}}{\Om(i,j)}$ measures how the negative edges influence the $\epsilon-$signed resistance.
	 \end{remark}
	 
	 \begin{lemma}\label{lem4}
	 	Assume $N_t, M_t$ are random nodes distributed according to probability distribution $\rho_{t,i}:=\exp^{-Qt}\vec{e_i}$ and $\rho_{t,j}:=\exp^{-Qt}\vec{e_j}$ where $Q=L_{+}+L_{-}$ is the original Laplacian, then
	 	\[ \lim\limits_{t\rightarrow 0}\frac{1}{t}\left(1-\frac{\mathbb{E}(\Om(N_t,M_t))}{\Om(i,j)}\right)=\vartheta_{\epsilon}(i,j) \]
	 \end{lemma}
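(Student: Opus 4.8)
The plan is to turn the expectation into a matrix bilinear form, expand the heat kernel $e^{-Qt}$ to first order in $t$, and then read off $\vartheta_{\epsilon}(i,j)$ by splitting the underlying Laplacian $Q=L_{+}+L_{-}$ as $(\Le)+(1+\epsilon)L_{-}$.

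First I would use the independence of $N_t$ and $M_t$, together with the fact that $\Om(\cdot,\cdot)$ is the quadratic form attached to the symmetric matrix $\Om$, to write
\[ \mathbb{E}\bigl(\Om(N_t,M_t)\bigr)=\bigl(e^{-Qt}\vec{e_i}\bigr)^{T}\,\Om\,\bigl(e^{-Qt}\vec{e_j}\bigr)=\vec{e_i}^{T}\,e^{-Qt}\,\Om\,e^{-Qt}\,\vec{e_j}. \]
Since $(i,j)\in E(G)$ forces $i\neq j$, so that $\Om(i,j)=\|v_i-v_j\|^{2}>0$, the analytic expansion $e^{-Qt}=I-tQ+O(t^{2})$ yields $\mathbb{E}(\Om(N_t,M_t))=\Om(i,j)-t\,\vec{e_i}^{T}\bigl(Q\,\Om+\Om\,Q\bigr)\vec{e_j}+O(t^{2})$, hence
\[ \lim_{t\to 0}\frac{1}{t}\Bigl(1-\frac{\mathbb{E}(\Om(N_t,M_t))}{\Om(i,j)}\Bigr)=\frac{\vec{e_i}^{T}\bigl(Q\,\Om+\Om\,Q\bigr)\vec{e_j}}{\Om(i,j)}. \]
It then remains to show the right-hand side equals $\vartheta_{\epsilon}(i,j)$.

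I would split $Q=(\Le)+(1+\epsilon)L_{-}$ and handle the two summands separately. For the $\Le$-summand I would invoke the matrix identity of Theorem \ref{eq2} --- equivalently, equation (\ref{equ4}) in the proof of Proposition \ref{pro3} --- which gives $(\Le)\Om=2\vec{r}\vec{1}^{T}-2I$ and, after transposing, $\Om(\Le)=2\vec{1}\vec{r}^{T}-2I$. Because $i\neq j$, the $-2I$ terms do not contribute off the diagonal, so $\vec{e_i}^{T}\bigl((\Le)\Om+\Om(\Le)\bigr)\vec{e_j}=2\bigl(\vec{r}(i)+\vec{r}(j)\bigr)$; rewriting $\vec{r}$ in terms of $\tau_{\epsilon}$ via the proportionality recorded in the proof of Proposition \ref{pro3} turns this into $\Om(i,j)$ times the first summand $2(\tau_{\epsilon}(i)+\tau_{\epsilon}(j))/\Om(i,j)$ of $\vartheta_{\epsilon}(i,j)$. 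For the $L_{-}$-summand I would compute directly from the entrywise description of $L_{-}$: one gets $\vec{e_i}^{T}L_{-}\,\Om\,\vec{e_j}=d_i^{-}\Om(i,j)-\sum_{k}w^{-}_{ik}\Om(k,j)$, and substituting $w^{-}_{ik}=d_i^{-}p^{-}_{ik}$ and dividing by $\Om(i,j)$ gives $d_i^{-}\bigl(1-\sum_{k}\Om(j,k)p^{-}_{ik}/\Om(i,j)\bigr)$; the mirror term $\vec{e_i}^{T}\Om\,L_{-}\vec{e_j}$ contributes the same expression with $i$ and $j$ interchanged, so together they equal $\Om(i,j)\Lambda_{\epsilon}(i,j)$. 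Weighting the two summands by $1$ and $(1+\epsilon)$ and adding produces $\Om(i,j)\,\vartheta_{\epsilon}(i,j)$, as needed.

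The step I expect to be the main obstacle is the $\Le$-summand: one must apply the simplex identity of Theorem \ref{eq2} with the correct normalization, relate the barycentric circumcenter vector $\vec{r}$ of the simplex of $\Le$ to the node $\epsilon$-repelling curvature $\tau_{\epsilon}$ --- this is where the standing assumption $0<\epsilon<\epsilon_0$, which guarantees that $\Om$ is invertible and that the simplex exists, is genuinely used --- and keep careful track of the fact that the identity terms $-2I$ drop out precisely because $i\neq j$, which is exactly why the lemma is stated for edges $(i,j)\in E(G)$ rather than for arbitrary vertex pairs. By contrast, once these identities are in place, the heat-kernel expansion and the $L_{-}$-computation are routine bookkeeping.
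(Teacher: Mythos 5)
Your proposal follows essentially the same route as the paper's proof: write $\mathbb{E}(\Om(N_t,M_t))=\vec{e_i}^{T}e^{-Qt}\,\Om\,e^{-Qt}\vec{e_j}$, expand the heat kernel to first order, split $Q=(\Le)+(1+\epsilon)L_{-}$, use the identity from Theorem \ref{eq2} (equation (\ref{equ4})) for the $\Le$-summand with the $-2I$ term vanishing off the diagonal, and compute the $L_{-}$-summand entrywise. The only caveat is the one you yourself flag as the main obstacle: converting $\vec{r}$ into $\tau_{\epsilon}$ carries the factor $n/(2R^2)$ from Proposition \ref{pro3}, a normalization the paper's own proof also glosses over by writing the anticommutator identity directly with $\tau_{\epsilon}$ in place of $\vec{r}$.
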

	 \begin{proof}
	 	From Equation (\ref{equ4}), we have 
	 	\[ \Om(\Le)+(\Le)\Om=2\left(\vec{1}\tau_{\epsilon}^T+\tau_{\epsilon}\vec{1}^T-2I\right) \]
	 	Since $Q=\Le+(1+\epsilon)L_{-}$, we can compute
	 	\begin{align*}
	 		\Om Q+Q\Om=2\left(\vec{1}\tau_{\epsilon}^T+\tau_{\epsilon}\vec{1}^T-2I\right)+(1+\epsilon)(\Om L_{-}+L_{-}\Om)
	 	\end{align*}
	 	Next we write $\mathbb{E}(\Om(N_t,M_t)) $ as follows:
	 	\begin{align*}
	 		\mathbb{E}(\Om(N_t,M_t))&=\sum_{k,l\in V}\vec{e_i}^T\exp^{-Qt}\vec{e_k}\Om(k,l)\vec{e_l}^T\exp^{-Qt}\vec{e_j}\\
	 		&=\vec{e_i}^T\exp^{-Qt}\Om\exp^{-Qt}\vec{e_j}\\
	 		&=\vec{e_i}^T(I-tQ+O(t^2))\Om(I-tQ+O(t^2))\vec{e_j}\\
	 		&=\Om(i,j)-2t\left(\tau_{\epsilon}(i)+\tau_{\epsilon}(j)\right)-t\left(1+\epsilon\right)\Om(i,j)(d_i^{-}+d_{j}^{-})\\
	 		&+t\left(1+\epsilon\right)\left(\sum_{k\in V}\Om(i,k)w_{kj}^{-}+\sum_{l\in V}\Om(j,l)w_{li}^{-}\right)+O(t^2)
	 	\end{align*}
	 	Therefore,
	 	\begin{align*}
	 		&\lim\limits_{t\rightarrow 0}\frac{1}{t}\left(1-\frac{\mathbb{E}(\Om(N_t,M_t))}{\Om(i,j)}\right)\\
	 		=&\frac{2(\tau_{\epsilon}(i)+\tau_{\epsilon}(j))}{\Omega_{\epsilon}^{\sigma}}+(1+\epsilon)(d_i^{-}+d_{j}^{-})-\left(1+\epsilon\right)\left(\sum_{k\in V}\frac{\Om(i,k)}{\Om(i,j)}w_{kj}^{-}+\sum_{l\in V}\frac{\Om(j,l)}{\Om(i,j)}w_{li}^{-}\right)\\
	 		=&\vartheta_{\epsilon}(i,j)
	 	\end{align*}
	 \end{proof}
	 
	Let $\kappa^{(LLY)}$ be the Lin-Lu-Yau Ricci curvature\cite{lin2011ricci} with respect to $\epsilon-$repelling cost $\Om(i,j)$ on $G$. That is,
	  \[ \kappa^{(LLY)}(i,j)=\lim\limits_{\alpha\rightarrow 0}\frac{1}{\alpha}\left(1-\frac{W\left(m_i^{\alpha},m_j^{\alpha}\right)}{\Om(i,j)}\right) \] 
	 where $m_i^{\alpha}, m_j^{\alpha}$ are the probability distributions of lazy random walk on $G$, defined as follows:
	 \begin{align*}
	 	m_i^{\alpha}(l)=
	 	\begin{cases}
	 		&1-\alpha d_i \quad \text{if}\quad l=i\\
	 		&\alpha w_{il}\quad \text{if}\quad l\sim i\\
	 		&0\quad \text{otherwise}
	 	\end{cases}
	 	\quad
	 	m_j^{\alpha}(l)=
	 	\begin{cases}
	 		&1-\alpha d_j \quad \text{if}\quad l=j\\
	 		&\alpha w_{jl} \quad \text{if}\quad l\sim j\\
	 		&0\quad \text{otherwise}
	 	\end{cases}
	 \end{align*}
	 You can find $m_i^{\alpha}=(I-\alpha Q)\vec{e_i}, m_j^{\alpha}=(I-\alpha Q)\vec{e_j}$. The optimal transport cost $$W(m_i^{\alpha},m_j^{\alpha})=\inf\limits_{\Pi}\sum_{k,l\in V}\Pi(k,l)\Om(k,l)$$ is the minimal with respect to the transport plans $\Pi$ satisfying $$ \sum_{k\in V}\Pi(k,l)=m_j^{\alpha}(l), \sum_{l\in V}\Pi(k,l)=m_i^{\alpha}(k)$$
	  Then using the above lemma, we can compare $\kappa^{(LLY)}(i,j)$ with $\vartheta_{\epsilon}(i,j)$.

	 \begin{proof}[Proof of Theorem \ref{thmv}]
	 	Obviously, $A(k,l):=m_i^{\alpha}(k)m_j^{\alpha}(l) $ is a transport plan satisfying 
	 	$$ \sum_{k\in V}A(k,l)=m_j^{\alpha}(l)\quad \sum_{l\in V}A(k,l)=m_i^{\alpha}(k)$$ Thus, we have
	 	\[ W(m_i^{\alpha},m_j^{\alpha})\le \sum_{k,l\in V}A(k,l)\Om(k,l)=\vec{e_i}^T(I-Qt)\Om(I-Qt)\vec{e_j} \]
	 	Recall \[ \mathbb{E}(\Om(N_t,M_t))=\vec{e_i}^T(I-tQ)\Om(I-tQ)\vec{e_j}+O(t^2) \]
	 	Then we know 
	 	\[ \lim\limits_{t\rightarrow 0}\frac{1}{t}\left(1-\frac{\mathbb{E}(\Om(N_t,M_t))}{\Om(i,j)}\right)\le \lim\limits_{t\rightarrow 0}\frac{1}{t}\left(1-\frac{W(m_i^{\alpha},m_j^{\alpha})}{\Om(i,j)}\right)=\kappa^{(LLY)}(i,j) \]
	 	By Lemma \ref{lem4}, we have
	 	\[  \vartheta_{\epsilon}(i,j)\le \kappa^{(LLY)}(i,j) \]
	 \end{proof}
	 
	 The Lin-Lu-Yau curvature $\kappa^{(LLY)}$ is based on the lazy random walk whose probability transition matrix is $P_t:=I-tQ$. The next lemma is concerned with the mixing rate of the lazy random walk.
	 \begin{lemma}\label{mix}
	 	Assume $G$ is finite with $|V|=n$ and $Q=D-A$ is its Laplacian. Let $d_{\text{max}}$ be the maximal degree of $G$. Then for any $0<t<\frac{1}{2d_{\text{max}}}$ and $n\in \mathbb{N}$, 
	 	\[ ||P_t^nf-\bar{f}||_2\le \rho^n||f||_2 \: ,\forall f:V\rightarrow \mathbb{R}\]
	 	where $\bar{f}=\frac{\sum_{x\in V}f(x)}{n}$, $\rho=1-t\mu_2$ is called the mixing rate of lazy random walk and $\mu_2$ is the minimal nonzero eigenvalue of $Q$.
	 \end{lemma}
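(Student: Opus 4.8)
The plan is to read the estimate directly off the spectral decomposition of $Q$. Because the underlying graph $G$ is connected, $Q=D-A$ is a symmetric positive semidefinite matrix whose eigenvalues can be ordered $0=\mu_1<\mu_2\le\cdots\le\mu_n$, and I would fix an orthonormal eigenbasis $\phi_1=\tfrac{1}{\sqrt n}\vec 1,\phi_2,\dots,\phi_n$ with $Q\phi_k=\mu_k\phi_k$. Since $P_t=I-tQ$, the vectors $\phi_k$ are eigenvectors of $P_t$ with eigenvalues $1-t\mu_k$, hence eigenvectors of $P_t^{n}$ with eigenvalues $(1-t\mu_k)^{n}$. Writing $f=\sum_{k=1}^{n}c_k\phi_k$ with $c_k=\langle f,\phi_k\rangle$, the term $k=1$ contributes $(1-t\mu_1)^{n}c_1\phi_1=c_1\phi_1=\bar f$, viewed as the constant function, because $\mu_1=0$; so $P_t$ fixes $\bar f$ and
\[
P_t^{n}f-\bar f=\sum_{k=2}^{n}(1-t\mu_k)^{n}c_k\phi_k .
\]

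Next I would bound the factors $1-t\mu_k$ uniformly over $k\ge 2$. The one nontrivial input is the classical estimate $\mu_n\le 2d_{\text{max}}$ for the largest Laplacian eigenvalue, immediate from Gershgorin since the $u$-th disc of $Q$ is centred at $d_u$ with radius $d_u$. Together with the hypothesis $0<t<\tfrac{1}{2d_{\text{max}}}$ this gives $0<t\mu_k\le t\mu_n<1$ for every $k\ge 2$, so $0<1-t\mu_k<1$; and since $\mu_k\ge\mu_2$ and $t>0$ we get $1-t\mu_k\le 1-t\mu_2=\rho$. Hence $0<1-t\mu_k\le\rho$ for all $k\ge 2$, and in particular $0<\rho<1$ so that $\rho^{n}$ genuinely decays.

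Finally, applying Parseval to the displayed identity,
\[
\|P_t^{n}f-\bar f\|_2^{2}=\sum_{k=2}^{n}(1-t\mu_k)^{2n}c_k^{2}\le \rho^{2n}\sum_{k=2}^{n}c_k^{2}\le \rho^{2n}\sum_{k=1}^{n}c_k^{2}=\rho^{2n}\|f\|_2^{2},
\]
and taking square roots yields the claim. I do not expect a real obstacle here: the argument is routine linear algebra. The only point deserving emphasis is that it uses connectedness of $G$ in an essential way — it is what forces $\mu_2>0$ and what guarantees that subtracting the mean $\bar f$ removes the entire kernel of $Q$ — so in carrying this out I would either invoke the standing connectedness assumption on the underlying graph or record it explicitly among the hypotheses of the lemma.
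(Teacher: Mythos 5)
Your proof is correct and follows essentially the same route as the paper's: diagonalize $Q$ (equivalently $P_t=I-tQ$) in an orthonormal eigenbasis, use $\mu_n\le 2d_{\text{max}}$ together with $t<\tfrac{1}{2d_{\text{max}}}$ to ensure all eigenvalues of $P_t$ lie in $(0,1]$, identify the constant mode with $\bar f$, and apply Parseval to bound the remaining modes by $\rho^{2n}$. Your explicit remark that connectedness is what makes $\mu_2>0$ and lets subtracting $\bar f$ kill the whole kernel is a useful clarification of a hypothesis the paper leaves implicit, but it is not a different argument.
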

	 
	 \begin{proof}
	 	It's known that the maximal eigenvalue of $Q$ is no more than $2d_{\text{max}}$. So the eigenvalues of $P_t$ is positive if $0<t<\frac{1}{2d_{\text{max}}}$. Assume $\alpha_1\le \alpha_2\le\cdots\le \alpha_n=1$ are the eigenvalues of $P_t$ and $\{f_k\}_{k=1}^n$ are the orthonormal eigenfunctions satisfying $P_tf_k=\alpha_kf_k$. Note $f_n$ is constant and $f_n\equiv \frac{1}{\sqrt{n}}$. For any $f:V\rightarrow \mathbb{R}$, we write it as:
	 	\[ f=\sum_{k=1}^{n}(f,f_k)f_k \]
	 	Thus,
	 	\begin{align*}
	 		P_tf=\sum_{k=1}^{n}\alpha_k(f,f_k)f_k
	 	\end{align*}
	 	Then
	 	\begin{align*}
	 		P_t^nf&=\sum_{k=1}^{n}\alpha_k^n(f,f_k)f_k\\
	 		&=\frac{\sum_{x\in V}f(x)}{n}+\sum_{k=1}^{n-1}\alpha_k^n(f,f_k)f_k\\
	 		&=\bar{f}+\sum_{k=1}^{n-1}\alpha_k^n(f,f_k)f_k
	 	\end{align*}
	 	Therefore,
	 	\begin{align*}
	 	||P_t^nf-\bar{f}||_2^2&=||\sum_{k=1}^{n-1}\alpha_k^n(f,f_k)f_k||_2^2\\
	 	&=\sum_{k=1}^{n-1}\alpha_k^{2n}(f,f_k)^2\\
	 	&\le \alpha_{n-1}^{2n}||f||_2^2
	 	\end{align*}
	 	equivalent to
	 	\[ ||P_t^nf-\bar{f}||_2\le \alpha_{n-1}^n||f||_2=(1-t\mu_2)^n||f||_2 \]
	 \end{proof}
	 
	 \begin{proof}[Proof of Theorem \ref{main5}]
	 	As a result of Theorem \ref{thmv}, we have
	 	\[ \kappa^{(LLY)}(i,j)\ge \vartheta_{\epsilon}(i,j)\ge k_{\epsilon} \]
	 	Since $G$ is finite, for any $\delta>0$, there exists $t_0>0$ satisfying:
	 	 \[ \frac{1}{t}\left(1-\frac{W(m_i^{t},m_j^{t})}{\Om(i,j)}\right)\ge (1-\delta)k_{\epsilon}, \quad \forall t\in (0,t_0) \]
	 	equivalent to
	 	\[ W(m_i^{t},m_j^{t})\le \left(1-\left(1-\delta\right)k_{\epsilon}t\right)\Om(i,j), \quad \forall t\in (0,t_0) \]
	 	From Kantorovich duality in \cite{villani2008optimal}, we know
	 	\[ W(m_i^{t},m_j^{t})=\sup\limits_{\Phi_{\epsilon}}\left\{ \sum_{x\in V} \phi(x)m_i^{t}(x)+\sum_{y\in V} \psi(y)m_j^{t}(y)\right\} \] 
	 	where the set $\Phi_{\epsilon}$ consists of all the pairs $(\phi,\psi)$ satisfying
	 	\[ \phi(x)+\psi(y)\le \Om(x,y), \quad \forall x,y \in V \]
	 	For any $k>0$ and assume $\phi:V\rightarrow \mathbb{R}$ satisfies $|\phi(x)-\phi(y)|\le k\Om(x,y), \quad \forall x,y \in V$, since $(\frac{\phi}{k},-\frac{\phi}{k}), (-\frac{\phi}{k},\frac{\phi}{k})\in \Phi_{\epsilon}$, 
	 	\begin{align*}
	 		W(m_i^{t},m_j^{t})&\ge \frac{1}{k}|\sum_{x\in V}\phi(x)\left(m_i^{t}(x)-m_j^{t}(x)\right)|\\
	 		&=|\left(I-tQ\right)\phi(i)-\left(I-tQ\right)\phi(j)|
	 	\end{align*}
	 Thus,
	 \[ |\left(I-tQ\right)\phi(i)-\left(I-tQ\right)\phi(j)|\le k\left(1-\left(1-\delta\right)k_{\epsilon}t\right)\Om(i,j) \]
	 Then for any $n\ge 1$,
	 \begin{align*}
	 	|\left(I-tQ\right)^n\phi(i)-\left(I-tQ\right)^n\phi(j)|&\le k\left(1-\left(1-\delta\right)k_{\epsilon}t\right)^n\Om(i,j)\\
	 	\sqrt[n]{|\left(I-tQ\right)^n\phi(i)-\left(I-tQ\right)^n\phi(j)|}&\le \sqrt[n]{k\Om(i,j)}\left(1-\left(1-\delta\right)k_{\epsilon}t\right)
	 \end{align*}
	 Let $n\rightarrow \infty$, we have
	 \[ \limsup_{n\rightarrow \infty}\sqrt[n]{|\left(I-tQ\right)^n\phi(i)-\left(I-tQ\right)^n\phi(j)|}\le 1-\left(1-\delta\right)k_{\epsilon}t \]
	 On the other hand, consider the mixing rate of $I-tQ$ and use Lemma \ref{mix}, for any $\phi:V\rightarrow \mathbb{R}$ we have :
	\begin{align*}
		|\left(I-tQ\right)^n\phi(i)-\left(I-tQ\right)^n\phi(j)|&\le |\left(I-tQ\right)^n\phi(i)-\bar{\phi}|+|\left(I-tQ\right)^n\phi(j)-\bar{\phi}|\\
		&\le ||\left(I-tQ\right)^n\phi-\bar{\phi}||\\
		&\le (1-t\mu_2)^n||\phi||_2
	\end{align*}
	Then
	\[ \limsup_{n\rightarrow \infty}\sqrt[n]{|\left(I-tQ\right)^n\phi(i)-\left(I-tQ\right)^n\phi(j)|}\le 1-t\mu_2\]
	Since $1-t\mu_2$ is the mixing rate, we have
	\[ 1-t\mu_2\le 1-\left(1-\delta\right)k_{\epsilon}t \]
	equivalent to
	\[ \mu_2\ge \left(1-\delta\right)k_{\epsilon} \]
 Let $\delta\rightarrow 0$, we have
 \[ \mu_2\ge k_{\epsilon} \]
	 \end{proof}
	 \section{Some examples}
	 \subsection{positive connected signed cycle}
	 Consider positive connected signed cycle $(C_n,\sigma_n)$ with simple weight, there exist only two cases. The first is trivial, that is $\sigma_n\equiv 1$ which means no negative edges. Since $L_{-}$ is zero, $\Om(\cdot,\cdot)$ is equal to resistance distance on the underlying graph $C_n$ for any $\epsilon$.  The other is that $(C_n,\sigma_n)$ consists of the positive subgraph $P_n$ and the negative subgraph $P_2$ where $P_n$ is path graph with $n$ vertices. 
	 
	 We assume $\sigma_n$ takes $-1$ at the edge $(1,n)$ and $1$ at other edges and compute the consensus index $\epsilon_0$ as well as the signed node curvature and edge curvature for several values of $\epsilon $ smaller than $\epsilon_0$ when $n=3, 4$.
	 
	 \paragraph*{\textbf{The consensus index of $(C_3,\sigma_3)$ is $\epsilon_0=0.5$}:} $\tau_{\epsilon}(1)=\tau_{\epsilon}(3)$ since vertices $1$ and $3$ can be transitive to each other. We can get  $\Lambda_{\epsilon}(1,2)=\Lambda_{\epsilon}(2,3)=0$ and $\Lambda_{\epsilon}(1,3)=2$ easily. 
	 \begin{table}[htbp]
	 	\centering
	 	\caption{$\tau_{\epsilon}$ and $\vartheta_{\epsilon}$ when $\epsilon=0.2, 0.3 0.449$}
	 	\begin{tabularx}{0.8\textwidth}{@{} c c c c c @{}}
	 		\toprule
	 		\textbf{$\epsilon$}   & \textbf{$\tau_{\epsilon}(2)$} &  \textbf{$\tau_{\epsilon}(1)=\tau_{\epsilon}(3)$}        & \textbf{$\vartheta_{\epsilon}(12)=\vartheta_{\epsilon}(23)$}& \textbf{$\vartheta_{\epsilon}(13)$}  \\ 
	 		\midrule
	 		0.2           & -0.5625           & 1.125 &  0.844  & 3.7501 \\
	 		0.3           & -0.7347           & 0.8571     &  0.1399  & 3.2857 \\
	 		0.4999           & -0.0012           & 0.0006     &  $\approx$0  & 2.9998 \\
	 		\bottomrule
	 	\end{tabularx}
	 \end{table}
	 
	 \paragraph*{\textbf{The consensus index of $(C_4,\sigma_4)$ is $\epsilon_0=0.33329$}:} We know $\tau_{\epsilon}(2)=\tau_{\epsilon}(3), \tau_{\epsilon}(1)=\tau_{\epsilon}(4)$ by transitivity. Besides, we have
	 \begin{align*}
	 	\Lambda_{\epsilon}(2,3)&=0\\
	 	\Lambda_{\epsilon}(1,2)=1-\frac{\Om(2,4)}{\Om(1,2)}&=1-\frac{\Om(2,4)}{\Om(1,2)}=\Lambda_{\epsilon}(3,4)\\
	 	\Lambda_{\epsilon}(1,4)&=2
	 \end{align*}
	 \begin{table}[htbp]
	 	\centering
	 	\caption{$\tau_{\epsilon}$ and $\vartheta_{\epsilon}$ when $\epsilon=0.1, 0.2, 0.3332$}
	 	\begin{tabularx}{0.9\textwidth}{@{} c c c c c c@{}}
	 		\toprule
	 		\textbf{$\epsilon$}   & \textbf{$\tau_{\epsilon}(2)=\tau_{\epsilon}(3)$} &  \textbf{$\tau_{\epsilon}(1)=\tau_{\epsilon}(4)$}        & \textbf{$\vartheta_{\epsilon}(12)=\vartheta_{\epsilon}(34)$}& \textbf{$\vartheta_{\epsilon}(23)$}&  \textbf{$\vartheta_{\epsilon}(14)$}\\ 
	 		\midrule
	 		0.1        & -0.2569           & 1.156 &  0.1985  & -0.8991&3.2789 \\
	 		0.2          & -0.4211           & 0.8421    &  -1.4387  & -1.1229&2.8491 \\
	 		0.3332           & -0.0012           & 0.0012    &  -3.9964  & $\approx$0&2.6664 \\
	 		\bottomrule
	 	\end{tabularx}
	 \end{table}
	 
	 \subsection{positive connected signed complete graph}
	The positive subgraph of a positive connected signed complete graph $(K_n,\sigma_n)$ must contain some spanning tree and the number of the complete graph's spanning trees is $n^{n-2}$. So you can image the number of non-isomorphic $(K_n,\sigma_n)$ is also large when $n$ is large. Here, we enumerate several non-isomorphic signed graph structures of $K_4$ and also compute the node and edge curvatures on them.
	
	\paragraph*{\textbf{The case of one negative edge:}} Let $\sigma_4$ take $-1$ at $(1,4)$ and $1$ at other edges. The consensus index $\epsilon_0$ is $0.9999$. Take $\epsilon$ as $0.5$. Then we compute $\tau_{\epsilon}(1)=\tau_{\epsilon}(4)=2.4242, \tau_{\epsilon}(2)=\tau_{\epsilon}(3)=-0.4848$. Since $\Lambda_{\epsilon}(\cdot,\cdot) $ takes nonzero values only at $(1,4)$ and $\Lambda_{\epsilon}(1,4)=2$, we compute $\vartheta_{\epsilon}(1,2)=\vartheta_{\epsilon}(3,4)=\vartheta_{\epsilon}(4,2)=\vartheta_{\epsilon}(3,1)=4.4329, \vartheta_{\epsilon}(2,3)=-3.8784, \vartheta_{\epsilon}(1,4)=7.8484$.

	\paragraph*{\textbf{The case of two negative edges:}} 
	\begin{itemize}
		\item Let $\sigma_4$ take $-1$ at $(1,3), (2,4)$ and $1$ at other edges. The consensus index $\epsilon_0$ is $1$. Take $\epsilon$ as $0.5$. We compute $\tau_{\epsilon}(1)=\tau_{\epsilon}(2)=\tau_{\epsilon}(3)=\tau_{\epsilon}(4)=0.8889$. Since $\Lambda_{\epsilon}(1,3)=\Lambda_{\epsilon}(2,4)=2$ and $\Lambda_{\epsilon}$ is zero on other edges, we compute $\vartheta_{\epsilon}(1,2)=\vartheta_{\epsilon}(2,3)=\vartheta_{\epsilon}(3,4)=\vartheta_{\epsilon}(1,4)=2.8445;\vartheta_{\epsilon}(1,3)=\vartheta_{\epsilon}(2,4)=4.7778$.
		
		\item Let $\sigma_4$ take $-1$ at $(1,3), (1,4)$ and $1$ at other edges. The consensus index $\epsilon_0$ is $0.333$. Take $\epsilon$ as $0.1$. We compute $\tau_{\epsilon}(1)=1.7436, \tau_{\epsilon}(2)=-1.1454, \tau_{\epsilon}(3)=\tau_{\epsilon}(4)=1.1819$. Since $\Lambda_{\epsilon}(1,3)=\Lambda_{\epsilon}(1,4)=2.7021, \Lambda_{\epsilon}(2,3)=\Lambda_{\epsilon}(2,4)=-0.7286, \Lambda_{\epsilon}(1,2)=0.843, \Lambda_{\epsilon}(3,4)=-4.7139$ , we compute $\vartheta_{\epsilon}(1,3)=\vartheta_{\epsilon}(1,4)=5.4994,    \vartheta_{\epsilon}(2,3)=\vartheta_{\epsilon}(2,4)=-0.7033, \vartheta_{\epsilon}(1,2)=1.8578, \vartheta_{\epsilon}(3,4)=1.6693$.
	\end{itemize}
	
	\paragraph*{\textbf{The case of three negative edges:}}  
	\begin{itemize}
		\item Let $\sigma_4$ take $1$ at $(1,3), (1,4), (2,3)$ and $-1$ at other edges. The consensus index $\epsilon_0$ is $0.1715$. Take $\epsilon$ as $0.1$. Since $\Lambda_{\epsilon}(1,2)=\Lambda_{\epsilon}(3,4)=-6.1347, \Lambda_{\epsilon}(1,4)=\Lambda_{\epsilon}(2,3)=2.6556, \Lambda_{\epsilon}(1,3)=3.9994, \Lambda_{\epsilon}(2,4)=0.3492$, we can compute:
		\begin{align*}
			\tau_{\epsilon}(1)=\tau_{\epsilon}(3)=0.8344, &\tau_{\epsilon}(2)=\tau_{\epsilon}(4)=-0.3457\\
			\vartheta_{\epsilon}(1,2)=\vartheta_{\epsilon}(3,4)=-6.0546,& \vartheta_{\epsilon}(1,4)=\vartheta_{\epsilon}(2,3)=3.16\\ \vartheta_{\epsilon}(1,3)=4.8712, & \vartheta_{\epsilon}(2,4)=-0.4258
		\end{align*}
		\item Let $\sigma_4$ take $-1$ at $(1,2), (1,3), (1,4)$ and $1$ at other edges. The consensus index $\epsilon_0$ is $0.3333$. Take $\epsilon$ as $0.1$. Since $\Lambda_{\epsilon}(1,2)=\Lambda_{\epsilon}(1,3)=\Lambda_{\epsilon}(1,4)=0, \Lambda_{\epsilon}(2,3)=\Lambda_{\epsilon}(2,4)=\Lambda_{\epsilon}(3,4)=2$, we can compute:
		\begin{align*}
			\tau_{\epsilon}(1)=-1.4979, &\tau_{\epsilon}(2)=\tau_{\epsilon}(3)=\tau_{\epsilon}(4)=1.037\\
			\vartheta_{\epsilon}(1,2)=&\vartheta_{\epsilon}(1,3)=\vartheta_{\epsilon}(1,4)=-0.717\\ \vartheta_{\epsilon}(2,3)=&\vartheta_{\epsilon}(2,4)=\vartheta_{\epsilon}(3,4)=3.6518
		\end{align*}
		
	\end{itemize}

\end{document}